\crefname{hypothesis}{Hypothesis}{Hypotheses}
\newcounter{example}
\newenvironment{example}{\refstepcounter{example}\vspace{1ex}
{\sc Example \theexample.}\hspace{0.3em}\parindent=0pt}{\vspace{1ex}}
\begin{document}

\title{Making the Nystr\"om method highly accurate\\for low-rank approximations\thanks{For review.
\funding{The research of Jianlin Xia was supported in part by an NSF grant DMS-2111007.}}
}
\author{Jianlin Xia\thanks{Department of Mathematics, Purdue University, West Lafayette, IN 47907 (xiaj@purdue.edu).}}
\maketitle

\begin{abstract}
The Nystr\"om method is a convenient heuristic method to obtain low-rank approximations to kernel matrices in
nearly linear complexity. Existing
studies typically use the method to approximate positive semidefinite matrices
with low or modest accuracies. In this work, we propose a series of heuristic
strategies to make the Nystr\"om method reach high accuracies for nonsymmetric
and/or rectangular matrices. The resulting methods (called \textit{high-accuracy Nystr\"om methods}) treat the Nystr\"om method and a
skinny rank-revealing factorization as a fast pivoting strategy in a
progressive alternating direction refinement process. Two refinement
mechanisms are used: alternating the row and column pivoting starting from a
small set of randomly chosen columns, and adaptively increasing the number of samples until a desired rank or accuracy is reached. A fast subset update
strategy based on the progressive sampling of Schur complements is further
proposed to accelerate the refinement process. Efficient randomized accuracy
control is also provided.
Relevant accuracy and singular value analysis is given to support some of the
heuristics. Extensive tests with various kernel functions and data sets show how the methods can quickly reach prespecified high accuracies in practice,
sometimes with quality close to SVDs, using only small numbers of progressive
sampling steps.
\end{abstract}

\headers{Jianlin Xia}{High-accuracy Nystr\"om methods} \begin{keywords}
high-accuracy Nystr\"om method, kernel matrix, low-rank approximation, progressive sampling, alternating direction refinement, error analysis
\end{keywords}

\begin{AMS}
15A23, 65F10, 65F30
\end{AMS}

\section{Introduction\label{sec:intro}}

The Nystr\"{o}m method is a very useful technique for data analysis and
machine learning. It can be used to quickly produce low-rank approximations to data matrices. The original Nystr\"{o}m method
in \cite{wil01} is designed for symmetric positive definite kernel matrices and it essentially uses uniform
sampling to select rows/columns (that correspond to some subsets of data
points) to serve as basis matrices in low-rank approximations. It has been
empirically shown to work reasonably well in practice. The Nystr\"{o}m method is highly efficient in the sense that it can produce a low-rank approximation
in complexity linear in the matrix size $n$ (supposing the target
approximation rank $r$ is small).

For problems with high coherence \cite{git16,tal10}, the accuracy of the usual
Nystr\"{o}m method with uniform sampling may be very low. There have been lots
of efforts to improve the method. See, e.g., \cite{dri05,git16,kum02,zha08}.
In order to gain good accuracy, significant extra costs are needed to estimate
leverage scores or determine sampling probabilities in nonuniform sampling
\cite{des06,dri12,ips13}.

Due to its modest accuracy, the Nystr\"{o}m method is usually used for data analysis and not much for regular
numerical computations. In numerical
analysis and scientific computing where controllable high accuracies are
desired, often truncated SVDs or more practical variations like rank-revealing
factorizations \cite{cha87,srrqr} and randomized SVD/sketching methods
\cite{hal11,tro17} are used. These methods can produce highly reliable
low-rank approximations but usually cost $O(n^{2})$ operations.

The purpose of this work is to propose a set of strategies based on the
Nystr\"{o}m method to produce high-accuracy low-rank approximations for kernel matrices in about linear complexity. The matrices are allowed to be
nonsymmetric and/or rectangular. Examples include \emph{off-diagonal blocks}
of larger kernel matrices that frequently arise from numerical solutions of
differential and integral equations, structured eigenvalue solutions, N-body
simulations, and image processing. There has been a rich history in studying
the low-rank structure of these off-diagonal kernel matrices based on ideas
from the fast multipole method (FMM) \cite{gre87} and hierarchical matrix
methods \cite{hac99}. To obtain a low-rank approximation to such a rectangular
kernel matrix $A$ with the Nystr\"{o}m method, a basic way is to choose
respectively random row and column index sets $\mathcal{I}$ and $\mathcal{J}$ and then get a so-called CUR approximation
\begin{equation}
A\approx A_{:,\mathcal{J}}A_{\mathcal{I},\mathcal{J}}^{+}A_{\mathcal{I},:},
\label{eq:cur}\end{equation}
where $A_{:,\mathcal{J}}$ and $A_{\mathcal{I},:}$ denote submatrices formed by
the columns and rows of $A$ corresponding to the index sets $\mathcal{J}$ and
$\mathcal{I}$, respectively, and $A_{\mathcal{I},\mathcal{J}}$ can be
understood similarly. However, the accuracy of (\ref{eq:cur}) is typically
low, unless the so-called volume of $A_{\mathcal{I},\mathcal{J}}$ happens to
be sufficiently large \cite{gor01}. It is well known that finding a submatrix with the maximum volume is NP-hard.

Here, we would like to design adaptive Nystr\"{o}m schemes that can produce
controllable errors (including near machine precision)\ while still retaining
nearly linear complexity in practice. We start by treating the combination of the Nystr\"{o}m method
and a reliable algebraic rank-revealing factorization as a fast pivoting strategy to select significant rows/columns (called
\emph{representative rows/columns} as in \cite{mhs}). We then provide one way
to analyze the resulting low-rank approximation error, which serves as a
motivation for the design of our new\ schemes. Further key strategies include
the following.

\begin{enumerate}
\item Use selected columns and rows to perform fast \emph{alternating direction row and column pivoting}, respectively, so as to refine selections
of representative rows and columns.

\item Adaptively attach a small number of new samples so as to perform
\emph{progressive alternating direction pivoting}, which produces new expanded
representative rows and columns and \emph{advances the numerical rank} needed
to reach high accuracies.

\item Use a fast subset update strategy that successively samples the Schur
complements so as to improve the efficiency and accelerate the advancement of
the sizes of basis matrix toward target numerical ranks.

\item Adaptively control the accuracy via quick estimation of the
approximation errors.
\end{enumerate}

Specifically, in the first strategy above, randomly selected columns are used
to quickly perform row pivoting for $A$ and obtain representative rows (which
form a \emph{row skeleton} $A_{\mathcal{I},:}$). The row skeleton is further
used to quickly perform column pivoting for $A$\ to obtain some representative columns (which form a
\emph{column skeleton} $A_{:,\mathcal{J}}$). This
refines the original choice of representative columns. Related methods include
various forms of the adaptive cross approximation (ACA) with row/column
pivoting \cite{beb00,mac16}, the volume sampling approximation \cite{des06},
and the iterative cross approximation \cite{lua20}. In particular, the method
in \cite{lua20} iteratively refines selections of significant submatrices
(with volumes as large as possible). However, later we can see that this
strategy alone is not enough to reach high accuracy, even if a large number of
initial samples is used.

Next in the second strategy, new column samples are attached progressively in
small stepsizes so as to repeat the alternating direction pivoting until
convergence is reached. Convenient uniform sampling is used since the sampled
columns are for the purpose of pivoting. This eliminates the need of
estimating sampling probabilities. The third strategy enables to avoid
applying pivoting to row/column skeletons with growing sizes. That is, the row (column) skeleton is expanded by
quickly updating the previous skeleton when
new columns (rows) are attached. We also give an aggressive subset update
method that can quickly reach high accuracies with a small number of
progressive sampling steps in practice. With the forth strategy, we can
conveniently control the number of sampling steps until a desired accuracy is
reached. It avoids the need to perform quadratic cost error estimation.

The combination of these strategies leads to a type of low-rank approximation
schemes which we call \emph{high-accuracy Nystr\"{o}m} (HAN) schemes. They are
heuristic schemes that are both fast and accurate in practice. Although a
fully rigorous justification of the accuracy is lacking, we give different
perspectives to motivate and support the ideas. Relevant analysis is provided
to understand certain singular value and accuracy behaviors in terms of both deterministic rank-revealing factorizations and statistical error evaluation.

We demonstrate the high accuracy of the HAN schemes through comprehensive
numerical tests based on kernel matrices defined from various kernel functions
evaluated at different data sets. In particular, an aggressive HAN scheme can
produce approximation accuracies close to the quality of truncated SVDs. It is
numerically shown to have nearly linear complexity and further usually needs
just a surprisingly small number of sampling steps.

Additionally, the design of the HAN schemes does not require analytical
information from the kernel functions or geometric information from the data points. They can then serve as fully blackbox fast low-rank approximation
methods, as indicated in the tests.

The remaining discussions are organized as follows. We show the pivoting
strategy based on the Nystr\"{o}m method and give a way to study the
approximation error in Section \ref{sec:srr}. The detailed design of the HAN
schemes together with relevant analysis is given in Section \ref{sec:han}.
Section \ref{sec:tests} presents the numerical tests, followed by some
concluding remarks in Section \ref{sec:concl}.

\section{Pivoting based on the Nystr\"{o}m method and an error
study\label{sec:srr}}

We first consider a low-rank approximation method based on a pivoting strategy
consisting of the Nystr\"{o}m method and rank-revealing factorizations of tall and skinny matrices. A way to study the low-rank approximation error will then
be given. These will provide motivations for some of our ideas in the HAN schemes.

Consider two sets of real data points in $d$ dimensions:
\[
\mathbf{x}=\left\{  x_{1},x_{2},\ldots,x_{m}\right\}  ,\quad\mathbf{y}=\left\{  y_{1},y_{2},\ldots,y_{n}\right\}  .
\]
Let $A$ be the $m\times n$ kernel matrix\begin{equation}
A=\left(  \kappa(x_{i},y_{j})\right)  _{x_{i}\in\mathbf{x},y_{j}\in\mathbf{y}}, \label{eq:kermat}\end{equation}
which is sometimes also referred to as the interaction matrix\ between
$\mathbf{x}$ and $\mathbf{y}$. We would like to approximate $A$ by a low-rank
form. The strong rank-revealing QR or LU\ factorizations \cite{srrqr,mir03}
are reliable ways to find low-rank approximations with high accuracy. They may
be used to obtain an approximation (called interpolative decomposition) of the
following form:
\begin{equation}
A\approx A_{:,\mathcal{J}}V^{T}\quad\text{with\quad}V=Q\left(
\begin{array}
[c]{c}I\\
F
\end{array}
\right)  , \label{eq:v}\end{equation}
where $Q$ is a permutation matrix, $r\equiv|\mathcal{J}|$ (size or cardinality
of $\mathcal{J}$) is the approximate (or numerical) rank, and $\Vert
F\Vert_{\max}\leq c$ with $c\geq1$. $c$ is a user-specified parameter and may be set to be a constant or a low-degree polynomial of $m$, $n$, and $r$ \cite{srrqr}.

We suppose $r$ is small. The column skeleton $A_{:,\mathcal{J}}$ corresponds
to a subset $\mathbf{t}\subset\mathbf{y}$ which is a subset of \emph{landmark
points}. Here we also call $\mathbf{t}$ a representative subset, which can be
selected reliably by strong rank-revealing\ factorizations. A strong
rank-revealing\ factorization may be further applied to $A_{:,\mathcal{J}}^{T}$ to select a representative subset $\mathbf{s}\subset\mathbf{x}$
corresponding to a row index set $\mathcal{I}$ in $A_{:,\mathcal{J}}$. That
is, we can find a \emph{pivot block} $A_{\mathcal{I},\mathcal{J}}$. Without
loss of generality, we may assume $|\mathcal{I}|=|\mathcal{J}|=r$. (If the
factorization produces $\mathcal{I}$ with $|\mathcal{I}|<|\mathcal{J}|$, $V$ can be modified so as to replace $\mathcal{J}$ by an appropriate index set
with size $|\mathcal{I}|$.) Thus, the resulting decomposition may be written
as an equality
\begin{equation}
A_{:,\mathcal{J}}=UA_{\mathcal{I},\mathcal{J}}\quad\text{with\quad}U=P\left(
\begin{array}
[c]{c}I\\
E
\end{array}
\right)  , \label{eq:aj}\end{equation}
where $P$ is a permutation matrix and, with $1:m$ standing for $1,2,\ldots,m$,
\begin{equation}
E=A_{\{1:m\}\backslash\mathcal{I},\mathcal{J}}A_{\mathcal{I},\mathcal{J}}^{-1},\quad\Vert E\Vert_{\max}\leq c. \label{eq:e}\end{equation}
Since $A_{:,\mathcal{J}}$ is a tall and skinny matrix, we refer to
(\ref{eq:aj}) as a \emph{skinny rank-revealing (SRR) factorization}.
(\ref{eq:v}) and (\ref{eq:aj}) in turn lead to the approximation\begin{equation}
A\approx UA_{\mathcal{I},\mathcal{J}}V^{T}. \label{eq:ubv}\end{equation}
With (\ref{eq:ubv}), we may further obtain a CUR\ approximation like in
(\ref{eq:cur}) (with the pseudoinverse replaced by $A_{\mathcal{I},\mathcal{J}}^{-1}$).

The direct application of strong rank-revealing\ factorizations to $A$ to
obtain (\ref{eq:v}) is expensive and costs $O(rmn)$. To reduce the cost, we
can instead follow the Nystr\"{o}m method and randomly sample columns from $A$ to form $A_{:,\mathcal{J}}$. However, the accuracy of the resulting
approximation based on the forms (\ref{eq:v}) or (\ref{eq:ubv}) may be low. On
the other hand, we can view the SRR factorization (\ref{eq:aj}) as a way to
quickly choose the representative subset $\mathbf{s}$ (based on the
interaction between $\mathbf{x}$ and $\mathbf{t}$ instead of the interaction
between $\mathbf{x}$ and $\mathbf{y}$). In other words, (\ref{eq:aj}) is a way
to quickly perform \emph{row pivoting} for $A$ so as to select representative rows $A_{\mathcal{I},:}$ from $A$. Then we can use the following low-rank approximation:
\begin{equation}
A\approx UA_{\mathcal{I},:}\quad\text{with\quad}U=P\left(
\begin{array}
[c]{c}I\\
E
\end{array}
\right)  , \label{eq:srrqr}\end{equation}
which may be viewed as a potentially refined form over (\ref{eq:v}) when
$\mathcal{J}$ is randomly selected. (Note that $P$ and $E$ depend on
$\mathcal{J}$.)

We would like to gain some insights into the accuracy of approximations based
on the Nystr\"{o}m method. There are various earlier studies based on
(\ref{eq:cur}). Those in \cite{cai22,zha08} are relevant to our result below.
When $A$ is positive (semi-)definite, the analysis in \cite{zha08} bounds the
errors in terms of the distances between the landmark points and the remaining
data points. A similar strategy is also followed in \cite[Lemma 3.1]{cai22}
for symmetric $A$. The resulting bound may be very conservative since it is
common for some data points in practical data sets to be far away from the
landmark points. In addition, the error bounds in \cite{cai22,zha08}
essentially involve a factor $\Vert A_{\mathcal{I},\mathcal{J}}^{-1}\Vert_{2}$
(or $\Vert A_{\mathcal{I},\mathcal{J}}^{+}\Vert_{2}$), which may be too large
if high accuracy is desired. This is because the smallest singular value of $A_{11}$ may be just slightly larger than a smaller tolerance.

Here, we provide a way to understand the approximation error based on
(\ref{eq:srrqr}). It uses the minimization of a slightly overdetermined
problem and does not involve $\Vert A_{\mathcal{I},\mathcal{J}}^{-1}\Vert_{2}$. The following analysis does not aim to precisely quantify the error
magnitude (which is hard anyway). Instead, it can serve as a motivation for
some strategies in our high-accuracy Nystr\"{o}m methods later.

\begin{lemma}
Suppose $\mathcal{J}$ is a given column index set with $|\mathcal{J}|=r$ and
(\ref{eq:aj})--(\ref{eq:e}) hold. Then the resulting approximation
(\ref{eq:srrqr}) satisfies\begin{equation}
\Vert A-UA_{\mathcal{I},:}\Vert_{\max}\leq2c\sqrt{r}\max_{1\leq i\leq m,1\leq
j\leq n}\min_{v\in\mathbb{R}^{r}}\Vert A_{\widetilde{\mathcal{I}}_{i},\mathcal{J}}v-A_{\widetilde{\mathcal{I}}_{i},j}\Vert_{2}, \label{eq:err}\end{equation}
where $\widetilde{\mathcal{I}}_{i}=\mathcal{I}\cup\{i\}$ for each $1\leq i\leq
m$.
\end{lemma}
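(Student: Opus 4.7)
The plan is to bound the entries of the residual $A - UA_{\mathcal{I},:}$ one at a time and then pass to the maximum norm. First I would observe that, by the structure of $U = P\bigl(\begin{smallmatrix}I\\E\end{smallmatrix}\bigr)$, for any row index $i \in \mathcal{I}$ the $i$th row of $UA_{\mathcal{I},:}$ is exactly $A_{i,:}$, so the corresponding error entries vanish and only rows $i \notin \mathcal{I}$ need attention. For such an $i$, write $u_i^T$ for the row of $U$ associated with $i$, which is a row of $E$ (up to permutation); then $\|u_i\|_\infty \leq c$ and hence $\|u_i\|_2 \leq c\sqrt{r}$.

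Next I would exploit the key exact identity $A_{:,\mathcal{J}} = U A_{\mathcal{I},\mathcal{J}}$ from (\ref{eq:aj}), which gives $A_{i,\mathcal{J}} = u_i^T A_{\mathcal{I},\mathcal{J}}$ as an equality (no approximation). For any $v \in \mathbb{R}^r$, this lets me rewrite the $(i,j)$ entry of the residual as
\begin{equation*}
A_{ij} - u_i^T A_{\mathcal{I},j}
= \bigl(A_{ij} - A_{i,\mathcal{J}} v\bigr) - u_i^T\bigl(A_{\mathcal{I},j} - A_{\mathcal{I},\mathcal{J}} v\bigr),
\end{equation*}
since the subtracted and added terms $A_{i,\mathcal{J}} v$ and $u_i^T A_{\mathcal{I},\mathcal{J}} v$ agree. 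Applying the triangle inequality and Cauchy--Schwarz gives
\begin{equation*}
|A_{ij} - u_i^T A_{\mathcal{I},j}|
\leq |A_{i,\mathcal{J}} v - A_{ij}| + \|u_i\|_2 \,\|A_{\mathcal{I},\mathcal{J}} v - A_{\mathcal{I},j}\|_2.
\end{equation*}

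Now I would recognize that the two terms on the right are precisely the $2$-norms of the two complementary blocks of the vector $A_{\widetilde{\mathcal{I}}_i,\mathcal{J}} v - A_{\widetilde{\mathcal{I}}_i,j}$, since $\widetilde{\mathcal{I}}_i = \mathcal{I}\cup\{i\}$. Hence each term is individually bounded by $\|A_{\widetilde{\mathcal{I}}_i,\mathcal{J}} v - A_{\widetilde{\mathcal{I}}_i,j}\|_2$, yielding
\begin{equation*}
|A_{ij} - u_i^T A_{\mathcal{I},j}|
\leq (1 + \|u_i\|_2)\,\|A_{\widetilde{\mathcal{I}}_i,\mathcal{J}} v - A_{\widetilde{\mathcal{I}}_i,j}\|_2
\leq (1 + c\sqrt{r})\,\|A_{\widetilde{\mathcal{I}}_i,\mathcal{J}} v - A_{\widetilde{\mathcal{I}}_i,j}\|_2.
\end{equation*}
Since $c \geq 1$ and $r \geq 1$, $1 + c\sqrt{r} \leq 2c\sqrt{r}$. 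Finally I would minimize over $v$ (the right-hand side bound holds for every $v$) and take the maximum over $i$ and $j$ to get (\ref{eq:err}); the rows $i \in \mathcal{I}$ are trivially covered since their error is zero.

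I do not expect a real obstacle: the only subtle step is the decomposition of the residual via the free vector $v$ together with the recognition that the two resulting terms exactly reassemble the $\widetilde{\mathcal{I}}_i$-block $2$-norm. The rest is bookkeeping on $\|u_i\|_2$ using $\|E\|_{\max} \leq c$.
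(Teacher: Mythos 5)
Your proposal is correct and follows essentially the same route as the paper: both split the residual entry by inserting an arbitrary $v$ through the exact identity $A_{i,\mathcal{J}} = u_i^T A_{\mathcal{I},\mathcal{J}}$ (equivalently $u_i^T = A_{i,\mathcal{J}}A_{\mathcal{I},\mathcal{J}}^{-1}$, a row of $E$ with $\Vert u_i\Vert_2 \le c\sqrt{r}$), apply the triangle inequality and Cauchy--Schwarz, and absorb the two complementary block norms into $\Vert A_{\widetilde{\mathcal{I}}_i,\mathcal{J}}v - A_{\widetilde{\mathcal{I}}_i,j}\Vert_2$ before minimizing over $v$. The only cosmetic difference is that you collect the constant as $1+c\sqrt{r}\le 2c\sqrt{r}$ while the paper first factors out $c\sqrt{r}$ and then bounds the sum of the two block norms by twice the full block norm.
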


\begin{proof}
From (\ref{eq:aj}) and (\ref{eq:srrqr}), we have, for any $1\leq i\leq m$,
$1\leq j\leq n$,\[
(A-UA_{\mathcal{I},:})_{ij}=(A-A_{:,\mathcal{J}}A_{\mathcal{I},\mathcal{J}}^{-1}A_{\mathcal{I},:})_{ij}=A_{ij}-A_{i,\mathcal{J}}A_{\mathcal{I},\mathcal{J}}^{-1}A_{\mathcal{I},j}.
\]
It is obvious that $A_{ij}-A_{i,\mathcal{J}}A_{\mathcal{I},\mathcal{J}}^{-1}A_{\mathcal{I},j}=0$ if $i\in\mathcal{I}$. Thus, suppose $i\in
\{1:m\}\backslash\mathcal{I}$.

For any $v\in\mathbb{R}^{r}$,\begin{align*}
|A_{ij}-A_{i,\mathcal{J}}A_{\mathcal{I},\mathcal{J}}^{-1}A_{\mathcal{I},j}|
&  =|(A_{ij}-A_{i,\mathcal{J}}v)+(A_{i,\mathcal{J}}v-A_{i,\mathcal{J}}A_{\mathcal{I},\mathcal{J}}^{-1}A_{\mathcal{I},j})|\\
&  \leq|A_{ij}-A_{i,\mathcal{J}}v|+\Vert A_{i,\mathcal{J}}A_{\mathcal{I},\mathcal{J}}^{-1}\Vert_{2}\Vert A_{\mathcal{I},\mathcal{J}}v-A_{\mathcal{I},j}\Vert_{2}\\
&  \leq|A_{ij}-A_{i,\mathcal{J}}v|+c\sqrt{r}\Vert A_{\mathcal{I},\mathcal{J}}v-A_{\mathcal{I},j}\Vert_{2},
\end{align*}
where the last step is because $A_{i,\mathcal{J}}A_{\mathcal{I},\mathcal{J}}^{-1}$ is a row of $E$ in (\ref{eq:e}) and its entries have magnitudes
bounded by $c$. With $c\geq1$, we further have\begin{align*}
|A_{ij}-A_{i,\mathcal{J}}A_{\mathcal{I},\mathcal{J}}^{-1}A_{\mathcal{I},j}|
&  \leq c\sqrt{r}\left(  |A_{i,\mathcal{J}}v-A_{ij}|+\Vert A_{\mathcal{I},\mathcal{J}}v-A_{\mathcal{I},j}\Vert_{2}\right) \\
&  \leq2c\sqrt{r}\Vert A_{\widetilde{\mathcal{I}}_{i},\mathcal{J}}v-A_{\widetilde{\mathcal{I}}_{i},j}\Vert_{2}.
\end{align*}
Since this holds for all $v\in
\mathbb{R}
^{r}$, take the minimum for $v$ to get the desired result.
\end{proof}

The bound in this lemma can be roughly understood as follows. If $A_{\widetilde{\mathcal{I}}_{i},j}$ is nearly in the range of
$A_{\widetilde{\mathcal{I}}_{i},\mathcal{J}}$ for all $i,j$, the bound in
(\ref{eq:err}) would then be very small and we would have found $\mathcal{I}$
and $\mathcal{J}$ that produce an accurate low-rank approximation
(\ref{eq:srrqr}). Otherwise, to further improve the accuracy, it would be
necessary to refine $\mathcal{I}$ and $\mathcal{J}$ and possibly include
additional $i$ and $j$ indices respectively into $\mathcal{I}$ and
$\mathcal{J}$. A heuristic strategy is to progressively pick $i$ and $j$ so
that $A_{\widetilde{\mathcal{I}}_{i},j}$ is as linearly independent from the
columns of $A_{\widetilde{\mathcal{I}}_{i},\mathcal{J}}$ as possible.
Motivated by this, we may use a subset refinement process. First, use randomly picked columns
$A_{:,\mathcal{J}}$ to generate a row skeleton and then use the
row skeleton to generate a new column skeleton. The new column skeleton
suggests which new $j$ should be attached to $\mathcal{J}$. Next, if a desired
accuracy is not reached,
then randomly pick more columns to attach to the refined set $\mathcal{J}$ and
start a new round of refinement. Such a process is called \emph{progressive
alternating direction pivoting} (or \emph{subset refinement}) below.

\section{High-accuracy Nystr\"{o}m schemes\label{sec:han}}

In this section, we show how to use the Nystr\"{o}m method to design the
high-accuracy Nystr\"{o}m (HAN) schemes that can produce highly accurate
low-rank approximations in practice.
We begin with the basic idea of the progressive alternating direction pivoting
and then show how to perform fast subset update and how to conveniently
control the accuracy.

\subsection{Progressive alternating direction pivoting}

The direct application of strong rank-revealing factorizations to $A$ has
quadratic complexity. One way to save the cost is as follows. Start from some
column samples of $A$ like in the usual Nystr\"{o}m method. Use the SRR
factorization to select a row\ skeleton, which can then be used to select a
refined column skeleton. The process can be repeated in a recursive way,
leading to a fast alternating direction refinement scheme. A similar empirical scheme has been adopted recently in \cite{lua20,pan19}.
However, when high accuracies are desired, the effectiveness of this scheme
may be limited. That is, just like the usual Nystr\"{o}m method, a brute-force
increase of the initial sample size may not necessarily improve the
approximation accuracy significantly. A high accuracy may require the initial sample size to be overwhelmingly larger than the target numerical rank, which
makes the cost too high.

Here, we instead adaptively or progressively apply the alternating direction refinement based on step-by-step small increases of the sample size.
We use one round of alternating row and column pivoting to refine the subset
selections. After this, if a target accuracy $\tau$ or numerical rank $r$ is
not reached, we include a small number of additional samples to repeat the procedure.

The basic framework to find a low-rank approximation to $A$ in
(\ref{eq:kermat}) is as follows, where the subset $\mathcal{J}$ is initially
an empty set and $b\leq r$ is a small integer as the \emph{stepsize} in the
progressive column sampling.

\begin{enumerate}
\item \label{step:rand}(\emph{Progressive sampling})\ Randomly choose a column
index set $\widehat{\mathcal{J}}\subset\{1:n\}\backslash\mathcal{J}$ with
$|\widehat{\mathcal{J}}|=b$ and set
\[
\widetilde{\mathcal{J}}=\mathcal{J}\cup\widehat{\mathcal{J}}.
\]

\item \label{step:row}(\emph{Row pivoting})\ Apply an SRR\ factorization to
$A_{:,\widetilde{\mathcal{J}}}$ to find a row index set $\mathcal{I}$:
\begin{equation}
A_{:,\widetilde{\mathcal{J}}}\approx UA_{\mathcal{I},\widetilde{\mathcal{J}}},
\label{eq:rowskel}\end{equation}
where $U$ looks like that in (\ref{eq:aj}).

\item \label{stel:col1}(\emph{Column pivoting})\ Apply an SRR\ factorization
to $A_{\mathcal{I},:}$ to find a refined column index set $\mathcal{J}$:
\begin{equation}
A_{\mathcal{I},:}\approx A_{\mathcal{I},\mathcal{J}}V^{T}, \label{eq:colskel}\end{equation}
where $V$ looks like that in (\ref{eq:v}).

\item (\emph{Accuracy check})\ If a desired accuracy, maximum sample size, or
a target numerical rank is reached or if $\mathcal{I}$ stays the same as in
the previous step, return a low-rank approximation to $A$ like the following
and exit:\[
\tilde{A}=UA_{\mathcal{I},:},\quad A_{:,\mathcal{J}}V^{T},\quad\text{or\quad
}UA_{\mathcal{I},\mathcal{J}}V^{T}.
\]

Otherwise, repeat from Step \ref{step:rand}. (More details on the stopping criteria and fast error estimation will be given in Section \ref{sub:acc}.)
\end{enumerate}

This basic HAN scheme (denoted \textsf{HAN-B}) is illustrated in Figure \ref{fig:han}, with more details given in Algorithm \ref{alg:han-b}.
Note that the key outputs of the SRR factorization (\ref{eq:aj}) are the index
set $\mathcal{I}$ and the matrix $E$. (The permutation matrix $P$ is just to
bring the index set $\mathcal{I}$ to the leading part and does not need to be
stored.) For convenience, we denote (\ref{eq:aj}) by the following procedure
in Algorithm \ref{alg:han-b} (with the parameter $c$ in (\ref{eq:e}) assumed
to be fixed):
\[
\lbrack\mathcal{I},E]\leftarrow\mathsf{SRR}(A_{:,\mathcal{J}}).
\]

The scheme may be understood heuristically as follows. Initially, with $\widetilde{\mathcal{J}}$ a random sample from the column indices, it is known
that the expectation of the norm of a row of $A_{:,\widetilde{\mathcal{J}}}$
is a multiple of the norm of the corresponding row in $A$ (see, e.g.,
\cite{avr11,dri06}). Thus, the relative magnitudes of the row norms of $A$ can
be roughly reflected by those of $A_{:,\widetilde{\mathcal{J}}}$. It then
makes sense to use $A_{:,\widetilde{\mathcal{J}}}$ for quick row pivoting (by
finding $A_{\mathcal{I},\widetilde{\mathcal{J}}}$ with determinant as large as
possible). This strategy shares features similar to the randomized pivoting
strategies in \cite{mar17,xia17} which are also heuristic and work well in
practice, except that the methods in \cite{mar17,xia17} need matrix-vector
multiplications with costs $O(mn)$.

\begin{figure}[ptbh]
\centering\includegraphics[height=1.1in]{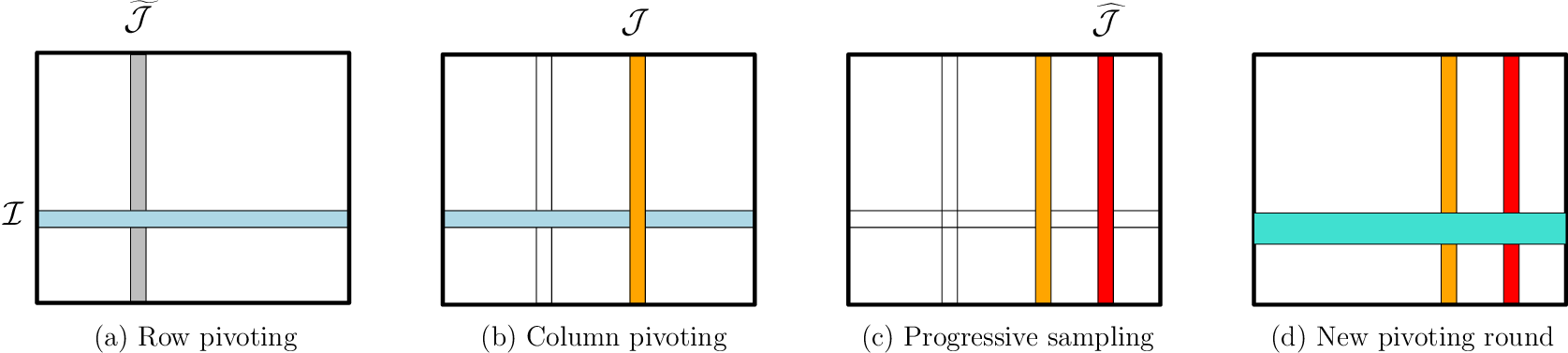}\caption{Illustration of the
basic high-accuracy Nystr\"{o}m (\textsf{HAN-B}) scheme.}\label{fig:han}\end{figure}

\begin{algorithm}
[!h]\caption{Basic HAN scheme}\label{alg:han-b}

\begin{algorithmic}
[1]\Procedure{\sf HAN-B}{$A$, $b$, $\tau$ (or $r$)}

\State\label{line:empty}$\mathcal{J}\leftarrow\varnothing$

\For{$i=1,2,\ldots$}

\State$\widehat{\mathcal{J}}\leftarrow$ randomly chosen index set from
$\{1:n\}\backslash\mathcal{J}$ with $|\widehat{\mathcal{J}}|=b$

\State$\widetilde{\mathcal{J}}\leftarrow\mathcal{J}\cup\widehat{\mathcal{J}}$\Comment{Progressive sampling}

\State\label{line:ibar}$\overline{\mathcal{I}}\leftarrow\mathcal{I}$

\State\label{line:ie}$[\mathcal{I},E]\leftarrow$ \textsf{SRR}$(A_{:,\widetilde{\mathcal{J}}})$\Comment{Row pivoting}

\State$\widehat{\mathcal{I}}\leftarrow\mathcal{I}\backslash\overline
{\mathcal{I}}$\Comment{Removal of indices in the previous $\mathcal{I}$ from the new one}

\State\label{line:jf}$[\mathcal{J},F]\leftarrow$ \textsf{SRR}$(A_{\mathcal{I},:}^{T})$\Comment{Column pivoting}

\If{a target accuracy $\tau$ or numerical rank $r$ is reached or $\widehat{\mathcal{I}}=\varnothing$}

\Comment{See Section \ref{sub:acc} for more discussions}

\State Return $A\approx UA_{\mathcal{I},:}$, $A_{:,\mathcal{J}}V^{T}$, or
$UA_{\mathcal{I},\mathcal{J}}V^{T}$ \EndIf
\EndFor
\EndProcedure

\end{algorithmic}
\end{algorithm}

With the resulting row pivot index set $\mathcal{I}$, the scheme further uses
the SRR\ factorization to find a submatrix $A_{\mathcal{I},\mathcal{J}}$ of
$A_{\mathcal{I},:}$ with determinant as large as possible, which\ enables to refine the column selection. It may be possible to further improve the index
sets through multiple rounds of such refinements like in \cite{lua20,pan19}.
However, the accuracy gain seems limited, even if a large initial sample size
is used (as shown in our test later). Thus, we progressively attach additional
samples (in small stepsizes) to the refined subset $\mathcal{J}$ and then
repeat the previous procedure. In practice, this makes a significant difference
in reducing the approximation error.

In this scheme, the sizes of the index sets $\mathcal{I}$ and $\mathcal{J}$
grow with the progressive sampling. Accordingly, the costs of the SRR
factorizations (\ref{eq:rowskel})--(\ref{eq:colskel}) increase since the
SRR\ factorizations at step $i$ are applied to matrices of sizes $m\times(ib)$ or $(ib)\times n$. With the total number of iterations $N\approx\frac{r}{b}$,
the total cost (excluding the cost to check the accuracy) is\begin{equation}
\xi_{\mathsf{HAN-B}}=\sum_{i=1}^{N}O\left(  (ib)^{2}(m+n)\right)  =O\left(
\frac{r^{3}}{b}(m+n)\right)  . \label{eq:cost1}\end{equation}
With $i$ increases, the iterations advance toward the target numerical rank or accuracy.

\subsection{Fast subset update via Schur complement sampling\label{sub:upd}}

In the basic scheme \textsf{HAN-B}, the complexity count in (\ref{eq:cost1})
for the SRR factorizations at step $i$ gets higher with increasing $i$. To
improve the efficiency, we show how to update the index sets so that at step
$i$, the SRR factorization (for the row pivoting step for example) only needs
to be applied to a matrix of size $(m-(i-1)b)\times b$ instead of
$m\times(ib)$, followed by some quick postprocessing steps.

Suppose we start from a column index set $\widetilde{\mathcal{J}}=\mathcal{J}\cup\widehat{\mathcal{J}}$ as in Step \ref{step:rand} of the basic
HAN\ scheme above. We would like to avoid applying the SRR factorizations to
the full columns $A_{:,\widetilde{\mathcal{J}}}$ in Step \ref{step:row} and
the full rows $A_{\mathcal{I},:}$ in Step \ref{stel:col1}. We seek to directly
produce an expanded column index set over $\mathcal{J}$, as illustrated in
Figure \ref{fig:han1}. It includes two steps. One is to produce an update
$\widehat{\mathcal{I}}$ to the row index set $\mathcal{I}$ (Figure
\ref{fig:han1}(a), which replaces Steps (c)--(d)\ in Figure \ref{fig:han}) and
the other is to produce an update to the column index set (Figure
\ref{fig:han1}(b)). Clearly, we just need to show how to perform the first
step. \begin{figure}[ptbh]
\centering\includegraphics[height=1.1in]{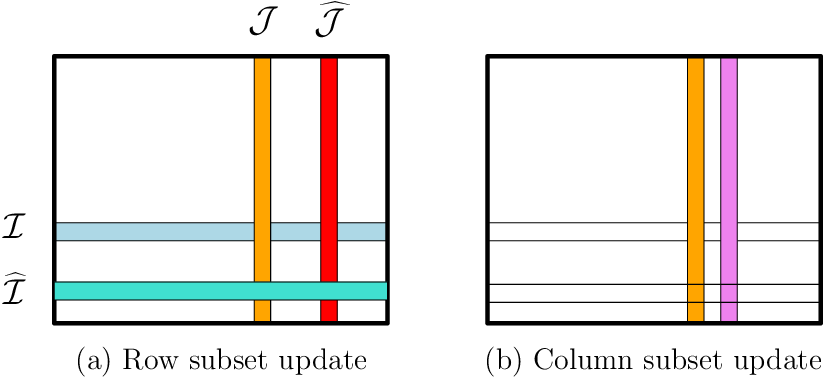}\caption{Illustration of the
subset update process.}\label{fig:han1}\end{figure}

With the row pivoting step like in (\ref{eq:rowskel}), we can obtain a
low-rank approximation of the form (\ref{eq:srrqr}). Using the row permutation
matrix $P$ in (\ref{eq:srrqr}) (computed in (\ref{eq:aj})), we may write $A$
as\begin{align}
A  &  =P\left(
\begin{array}
[c]{cc}A_{11} & A_{12}\\
A_{21} & A_{22}\end{array}
\right)  \approx P\left(
\begin{array}
[c]{c}I\\
E
\end{array}
\right)  \left(
\begin{array}
[c]{cc}A_{11} & A_{12}\end{array}
\right)  \quad\text{with}\label{eq:a11}\\
A_{11}  &  =A_{\mathcal{I},\mathcal{J}},\quad E=A_{21}A_{11}^{-1}.\nonumber
\end{align}
At this point, we have\begin{equation}
A=P\left(
\begin{array}
[c]{cc}I & \\
E & I
\end{array}
\right)  \left(
\begin{array}
[c]{cc}A_{11} & A_{12}\\
& S
\end{array}
\right)  \quad\text{with\quad}S=A_{22}-EA_{12}, \label{eq:afact1}\end{equation}
where $S$ is the \emph{Schur complement}.

\begin{remark}
\label{rem:srrlu}In the usual strong rank-revealing factorizations like the
one in \cite{mir03}, the low-rank approximation is obtained also from a
decomposition of the form (\ref{eq:afact1}) with $S$ dropped. Here, our fast
pivoting scheme is more efficient. Of course, the strong rank-revealing
factorization in \cite{mir03} guarantees the quality of the low-rank
approximation in the sense that, there exist low-degree polynomials $c\geq1$
and $f\geq1$ in $m$, $n$, and $k$ (size of $A_{11}$) such that (\ref{eq:e})
holds and, for $1\leq i\leq k$, $1\leq j\leq\min\{m,n\}-k$,
\begin{equation}
\sigma_{i}(A_{11})\geq\frac{\sigma_{i}(A)}{f},\quad\sigma_{j}(S)\leq
\sigma_{k+j}(A)f,\quad\Vert A_{11}^{-1}A_{12}\Vert_{\max}\leq c,
\label{eq:svsrrlu}\end{equation}
where $\sigma_{i}(\cdot)$ denotes the $i$-th largest singular value of a matrix.
\end{remark}

Our subset update strategy is via the sampling of the Schur complement $S$. In fact, when $A_{\mathcal{I},:}=\left(
\begin{array}
[c]{cc}A_{11} & A_{12}\end{array}
\right)  $ is accepted as a reasonable row skeleton, we then continue to find
a low-rank approximation to $S$ in (\ref{eq:afact1}) so it makes sense to
sample $S$. It is worth noting that the full matrix $S$ is not needed.
Instead, only its columns corresponding to $A_{:,\widehat{\mathcal{J}}}$ are formed. That is, we form
\[
S_{:,\mathcal{L}}=(A_{22})_{:,\mathcal{L}}-E(A_{12})_{:,\mathcal{L}},
\]
where $\mathcal{L}$ corresponds to $\widehat{\mathcal{J}}$ and selects entries
from $\{1:n\}\backslash\mathcal{J}$ in a two-level composition of the index
sets as follows:
\begin{equation}
(\{1:n\}\backslash\mathcal{J})\circ\mathcal{L}=\widehat{\mathcal{J}}.
\label{eq:jtilde}\end{equation}
That is, sampling the columns of $A$ with the index set $\widehat{\mathcal{J}}$ is essentially to sample the columns of $S$ with $\mathcal{L}$. For
notational convenience, suppose the columns of $A$ have been permuted so that
\[
P^{T}A=\left(
\begin{array}
[c]{c|c}A_{11} & A_{12}\\
A_{21} & A_{22}\end{array}
\right)  \quad\text{with\quad}A_{:,\mathcal{J}}=\begin{pmatrix}
A_{11}\\
A_{21}\end{pmatrix}
.
\]

Now, apply an SRR\ factorization to $S_{:,\mathcal{L}}$ to get\begin{equation}
S_{:,\mathcal{L}}\approx\hat{P}\left(
\begin{array}
[c]{c}I\\
\hat{E}\end{array}
\right)  S_{\mathcal{K},\mathcal{L}}. \label{eq:sskel}\end{equation}
Then $S\approx\hat{P}\left(
\begin{array}
[c]{c}I\\
\hat{E}\end{array}
\right)  S_{\mathcal{K},:}$. Accordingly, we may write $S$ as\begin{equation}
S=\hat{P}\left(
\begin{array}
[c]{cc}S_{22} & S_{23}\\
S_{32} & S_{33}\end{array}
\right)  , \label{eq:spart}\end{equation}
where $S_{\mathcal{K},:}=\left(
\begin{array}
[c]{cc}S_{22} & S_{23}\end{array}
\right)  $. From (\ref{eq:sskel}) and (\ref{eq:spart}), $S$ can be further
written as\begin{equation}
S=\hat{P}\left(
\begin{array}
[c]{cc}I & \\
\hat{E} & \hat{S}\end{array}
\right)  \left(
\begin{array}
[c]{cc}S_{22} & S_{23}\\
& I
\end{array}
\right)  \quad\text{with\quad}\hat{S}=S_{33}-\hat{E}S_{23}, \label{eq:s}\end{equation}
where $\hat{S}$ is a new Schur complement (and is not formed).

At this point, we have the following proposition which shows how to expand the
row index set $\mathcal{I}$ by an update $\widehat{\mathcal{I}}$.

\begin{proposition}
$A$ may be factorized as\begin{equation}
A=\tilde{P}\left(
\begin{array}
[c]{cc}I & \\
\tilde{E} & I
\end{array}
\right)  \left(
\begin{array}
[c]{cc}A_{\widetilde{\mathcal{I}},\widetilde{\mathcal{J}}} &
A_{\widetilde{\mathcal{I}},\{1:n\}\backslash\widetilde{\mathcal{J}}}\\
& \hat{S}\end{array}
\right)  , \label{eq:a1}\end{equation}
where $\tilde{P}$ is a permutation matrix, $\widetilde{\mathcal{I}}=\mathcal{I}\cup\widehat{\mathcal{I}}$ with
\begin{equation}
\widehat{\mathcal{I}}=(\{1:m\}\backslash\mathcal{I)}\circ\mathcal{K},
\label{eq:hati}\end{equation}
and $\Vert\tilde{E}\Vert_{\max}\leq bc^{2}+c$ when $\Vert E\Vert_{\max}\leq c$, $\Vert\hat{E}\Vert_{\max}\leq c$, and $|\widehat{\mathcal{J}}|=b$.
\end{proposition}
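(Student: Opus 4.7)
The plan is to derive (\ref{eq:a1}) by nesting the factorization of $S$ in (\ref{eq:s}) inside the factorization of $A$ in (\ref{eq:afact1}), folding the two row permutations into a single $\tilde{P}$, and applying a column permutation that places the columns indexed by $\widehat{\mathcal{J}}$ right after those indexed by $\mathcal{J}$. Rewriting (\ref{eq:s}) in the equivalent standard block-LU form
\[
S = \hat{P}\begin{pmatrix} I & \\ \hat{E} & I \end{pmatrix}\begin{pmatrix} S_{22} & S_{23} \\ & \hat{S} \end{pmatrix}
\]
and substituting into (\ref{eq:afact1}), I expect the intermediate three-block factorization
\[
\tilde{P}^T A = \begin{pmatrix} I & & \\ E_{\mathcal{K},:} & I & \\ E_{\bullet,:} & \hat{E} & I \end{pmatrix} \begin{pmatrix} A_{\mathcal{I},\mathcal{J}} & A_{\mathcal{I},\widehat{\mathcal{J}}} & A_{\mathcal{I},\{1:n\}\backslash\widetilde{\mathcal{J}}} \\ & S_{22} & S_{23} \\ & & \hat{S} \end{pmatrix},
\]
where $E_{\mathcal{K},:}$ is the submatrix of $E$ formed by its rows indexed by $\mathcal{K}$ and $E_{\bullet,:}$ is the complementary submatrix. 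The block identities $S_{22} = A_{\widehat{\mathcal{I}},\widehat{\mathcal{J}}} - E_{\mathcal{K},:} A_{\mathcal{I},\widehat{\mathcal{J}}}$, $\hat{E}S_{22}=S_{32}$, $S_{23} = A_{\widehat{\mathcal{I}},\{1:n\}\backslash\widetilde{\mathcal{J}}} - E_{\mathcal{K},:} A_{\mathcal{I},\{1:n\}\backslash\widetilde{\mathcal{J}}}$, and $\hat{S}=S_{33}-\hat{E}S_{23}$ verify this directly.

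Next I would collapse the three-block factorization into the two-block form in (\ref{eq:a1}) by merging its first two block rows and its first two block columns into single blocks corresponding to $\widetilde{\mathcal{I}}$ and $\widetilde{\mathcal{J}}$, respectively. The leading $2\times 2$ lower-upper product multiplies out to $A_{\widetilde{\mathcal{I}},\widetilde{\mathcal{J}}}$ by one use of the block LU identity, and the merged $(1,2)$ block becomes $A_{\widetilde{\mathcal{I}},\{1:n\}\backslash\widetilde{\mathcal{J}}}$ using the formula for $S_{23}$ above. Reading off the merged $(2,1)$ block,
\[
\tilde{E} = \begin{pmatrix} E_{\bullet,:} & \hat{E} \end{pmatrix} \begin{pmatrix} I & \\ E_{\mathcal{K},:} & I \end{pmatrix}^{-1} = \begin{pmatrix} E_{\bullet,:} - \hat{E}E_{\mathcal{K},:} & \hat{E} \end{pmatrix},
\]
so (\ref{eq:a1}) holds with $\widehat{\mathcal{I}}$ as in (\ref{eq:hati}).

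For the max-norm bound, since $\hat{E}E_{\mathcal{K},:}$ has inner dimension $|\mathcal{K}|=b$, the entrywise estimate gives $\Vert\hat{E}E_{\mathcal{K},:}\Vert_{\max}\leq b\Vert\hat{E}\Vert_{\max}\Vert E_{\mathcal{K},:}\Vert_{\max}\leq bc^2$. Combined with $\Vert E_{\bullet,:}\Vert_{\max}\leq c$ via the triangle inequality and $\Vert\hat{E}\Vert_{\max}\leq c$, and using $c\geq1$, this yields $\Vert\tilde{E}\Vert_{\max}\leq\max(c+bc^2,c)=bc^2+c$, as claimed. The main obstacle I expect is purely notational: tracking the two nested permutations $P,\hat{P}$ together with the two-level index compositions (\ref{eq:jtilde}) and (\ref{eq:hati}) so that, after merging, the $(1,1)$ block is unambiguously $A_{\widetilde{\mathcal{I}},\widetilde{\mathcal{J}}}$. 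Once the bookkeeping is set up, the algebra reduces to two applications of the block LU identity and a single triangle inequality.
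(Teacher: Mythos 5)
Your proposal is correct and follows essentially the same route as the paper: substitute the Schur-complement factorization into (\ref{eq:afact1}), merge the permutations into $\tilde{P}$, form the three-block factorization with $E$ split conformably into $(E_{\mathcal{K},:},E_{\bullet,:})$ (the paper's $E_1,E_2$), and collapse to the two-block form with $\tilde{E}=\begin{pmatrix}E_{\bullet,:}-\hat{E}E_{\mathcal{K},:} & \hat{E}\end{pmatrix}$, followed by the same entrywise bound $bc^2+c$. The only cosmetic difference is that you obtain $\tilde{E}$ by multiplying through the inverse of the unit lower-triangular middle factor, whereas the paper explicitly refactors that factor into a product of two block-triangular matrices; the computation is identical.
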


\begin{proof}
(\ref{eq:afact1}) and (\ref{eq:s}) lead to\begin{align}
A  &  =P\left(
\begin{array}
[c]{cc}I & \\
E & I
\end{array}
\right)  \left(
\begin{array}
[c]{cc}A_{11} & A_{12}\\
& \hat{P}\begin{pmatrix}
I & \\
\hat{E} & \hat{S}\end{pmatrix}\begin{pmatrix}
S_{22} & S_{23}\\
& I
\end{pmatrix}
\end{array}
\right) \label{eq:afact2}\\
&  =P\left(
\begin{array}
[c]{cc}I & \\
E & \hat{P}\begin{pmatrix}
I & \\
\hat{E} & \hat{S}\end{pmatrix}
\end{array}
\right)  \left(
\begin{array}
[c]{cc}A_{11} & A_{12}\\
&
\begin{pmatrix}
S_{22} & S_{23}\\
& I
\end{pmatrix}
\end{array}
\right) \nonumber\\
&  =P\left(
\begin{array}
[c]{cc}I & \\
& \hat{P}\end{array}
\right)  \left(
\begin{array}
[c]{cc}I & \\
\hat{P}^{T}E &
\begin{pmatrix}
I & \\
\hat{E} & \hat{S}\end{pmatrix}
\end{array}
\right)  \left(
\begin{array}
[c]{cc}A_{11} & A_{12}\\
&
\begin{pmatrix}
S_{22} & S_{23}\\
& I
\end{pmatrix}
\end{array}
\right) \nonumber\\
&  =\tilde{P}\left(
\begin{array}
[c]{ccc}I &  & \\
E_{1} & I & \\
E_{2} & \hat{E} & \hat{S}\end{array}
\right)  \left(
\begin{array}
[c]{ccc}A_{11} & \hat{A}_{12} & \hat{A}_{13}\\
& S_{22} & S_{23}\\
&  & I
\end{array}
\right)  ,\nonumber
\end{align}
where $\tilde{P}=P\begin{pmatrix}
I & \\
& \hat{P}\end{pmatrix}
$, $\hat{P}^{T}E$ is partitioned as $\begin{pmatrix}
E_{1}\\
E_{2}\end{pmatrix}
$ conformably with $\begin{pmatrix}
I\\
\hat{E}\end{pmatrix}
$, and $A_{12}$ is partitioned as $\left(
\begin{array}
[c]{cc}\hat{A}_{12} & \hat{A}_{13}\end{array}
\right)  $ with $\hat{A}_{12}=A_{\mathcal{I},\widehat{\mathcal{J}}}$.

We can now factorize the second factor on the far right-hand side of
(\ref{eq:afact2}) as\[
\left(
\begin{array}
[c]{ccc}I &  & \\
E_{1} & I & \\
E_{2} & \hat{E} & \hat{S}\end{array}
\right)  =\left(
\begin{array}
[c]{ccc}I &  & \\
& I & \\
\bar{E} & \hat{E} & \hat{S}\end{array}
\right)  \left(
\begin{array}
[c]{ccc}I &  & \\
E_{1} & I & \\
&  & I
\end{array}
\right)  ,
\]
where\begin{equation}
\bar{E}=E_{2}-\hat{E}E_{1}. \label{eq:ehat}\end{equation}
Then, $A$ may be written as\begin{align}
A  &  =\tilde{P}\left(
\begin{array}
[c]{ccc}I &  & \\
& I & \\
\bar{E} & \hat{E} & \hat{S}\end{array}
\right)  \left(
\begin{array}
[c]{ccc}I &  & \\
E_{1} & I & \\
&  & I
\end{array}
\right)  \left(
\begin{array}
[c]{ccc}A_{11} & \hat{A}_{12} & \hat{A}_{13}\\
& S_{22} & S_{23}\\
&  & I
\end{array}
\right) \label{eq:apart3}\\
&  =\tilde{P}\left(
\begin{array}
[c]{ccc}I &  & \\
& I & \\
\bar{E} & \hat{E} & I
\end{array}
\right)  \left(
\begin{array}
[c]{ccc}A_{11} & \hat{A}_{12} & \hat{A}_{13}\\
\hat{A}_{21} & \hat{A}_{22} & \hat{A}_{23}\\
&  & \hat{S}\end{array}
\right) \nonumber
\end{align}
where\[\begin{pmatrix}
\hat{A}_{21} & \hat{A}_{22} & \hat{A}_{23}\end{pmatrix}
=\begin{pmatrix}
0 & S_{22} & S_{23}\end{pmatrix}
+E_{1}\begin{pmatrix}
A_{11} & \hat{A}_{12} & \hat{A}_{13}\end{pmatrix}
.
\]
The block $\begin{pmatrix}
\hat{A}_{21} & \hat{A}_{22} & \hat{A}_{23}\end{pmatrix}
$ essentially corresponds to the rows of $A$ with index set
$\widehat{\mathcal{I}}$ in (\ref{eq:hati}). This is because of the special
form of the second factor on the right-hand side of (\ref{eq:apart3}). Then
get (\ref{eq:a1}) by letting\[
\tilde{E}=\begin{pmatrix}
\bar{E} & \hat{E}\end{pmatrix}
.
\]

Now since $\Vert E\Vert_{\max}\leq c$, $\Vert\hat{E}\Vert_{\max}\leq c$, and
$\hat{E}$ has column size $b$, we have $\Vert\bar{E}\Vert_{\max}\leq bc^{2}+c$. Accordingly, $\Vert\tilde{E}\Vert_{\max}\leq bc^{2}+c$.
\end{proof}

This proposition shows that we can get a factorization (\ref{eq:a1}) similar
to (\ref{eq:afact1}), but with the expanded row skeleton
$A_{\widetilde{\mathcal{I}},:}$. Accordingly, we may then obtain a new
approximation to $A$ similar to (\ref{eq:srrqr}):\begin{equation}
A\approx\tilde{U}A_{\widetilde{\mathcal{I}},:}\quad\text{with\quad}\tilde
{U}=\tilde{P}\left(
\begin{array}
[c]{c}I\\
\tilde{E}\end{array}
\right)  , \label{eq:approx1}\end{equation}

To support the reliability of such an approximation, we can use the following
way. As mentioned in Remark \ref{rem:srrlu}, if (\ref{eq:afact1}) is assumed
to be obtained by a strong rank-revealing factorization, then we would have
nice singular value bounds in (\ref{eq:svsrrlu}). Now, if we assume that is
the case and (\ref{eq:s}) is also obtained by a strong rank-revealing
factorization, then we would like to show (\ref{eq:a1}) from the subset update
would also satisfy some nice singular value bounds. For this purpose, we need the following lemma.

\begin{lemma}
\label{lem:mu}If (\ref{eq:afact1}) is assumed to satisfy (\ref{eq:svsrrlu})
with $k=|\mathcal{I}|$ the size of $A_{11}$, and (\ref{eq:s}) is assumed to
satisfy, for $1\leq i\leq b$, $1\leq j\leq\min\{m,n\}-k-b$,\begin{equation}
\sigma_{i}(S_{22})\geq\frac{\sigma_{i}(S)}{\hat{f}},\quad\sigma_{j}(\hat
{S})\leq\sigma_{b+j}(S)\hat{f}, \label{eq:ssrrlu}\end{equation}
where $\hat{f}\geq1$, then $\mu=\frac{\sigma_{k}(A_{11})}{\sigma_{1}(S_{22})}$
satisfies\[
\frac{1}{f^{2}}\leq\mu\leq s\hat{f},
\]
where $s=\frac{\sigma_{k}(A)}{\sigma_{k+1}(A)}$.
\end{lemma}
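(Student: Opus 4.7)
I would bound the numerator and denominator of $\mu$ separately, combining two ingredients: (i) the standard submatrix interlacing of singular values, which gives $\sigma_i(B')\le\sigma_i(B)$ whenever $B'$ is obtained from $B$ by dropping rows and/or columns, and (ii) the assumed strong rank-revealing bounds (\ref{eq:svsrrlu}) and (\ref{eq:ssrrlu}). The only nontrivial step is to lower-bound $\sigma_1(S)$ by $\sigma_{k+1}(A)$, which I will get from (\ref{eq:afact1}) via Eckart--Young.

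\textbf{Lower bound $\mu\ge 1/f^2$.} First, since $S_{22}$ is a submatrix of $S$, interlacing gives $\sigma_1(S_{22})\le\sigma_1(S)$. Applying the upper bound on $\sigma_1(S)$ from (\ref{eq:svsrrlu}) with $j=1$ yields $\sigma_1(S)\le f\sigma_{k+1}(A)$, and monotonicity of singular values gives $\sigma_{k+1}(A)\le\sigma_k(A)$. Combining with the lower bound $\sigma_k(A_{11})\ge\sigma_k(A)/f$ from (\ref{eq:svsrrlu}) produces the chain
\[
\sigma_1(S_{22})\ \le\ f\sigma_{k+1}(A)\ \le\ f\sigma_k(A)\ \le\ f^2\sigma_k(A_{11}),
\]
which rearranges to $\mu\ge 1/f^2$.

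\textbf{Upper bound $\mu\le s\hat f$.} The key step is a reverse inequality $\sigma_1(S)\ge\sigma_{k+1}(A)$. To obtain it, I would read (\ref{eq:afact1}) as giving the exact rank-$k$ approximation
\[
\tilde A \ =\ P\begin{pmatrix}I\\ E\end{pmatrix}\begin{pmatrix}A_{11} & A_{12}\end{pmatrix},\qquad A-\tilde A \ =\ P\begin{pmatrix}0 & 0\\ 0 & S\end{pmatrix},
\]
so $\|A-\tilde A\|_2=\|S\|_2=\sigma_1(S)$; since $\tilde A$ has rank at most $k$, Eckart--Young forces $\sigma_1(S)\ge\sigma_{k+1}(A)$. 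Combined with the lower bound $\sigma_1(S_{22})\ge\sigma_1(S)/\hat f$ from (\ref{eq:ssrrlu}) this gives $\sigma_1(S_{22})\ge\sigma_{k+1}(A)/\hat f$. Using submatrix interlacing in the other direction, $\sigma_k(A_{11})\le\sigma_k(A)$, so
\[
\mu\ =\ \frac{\sigma_k(A_{11})}{\sigma_1(S_{22})}\ \le\ \frac{\hat f\,\sigma_k(A)}{\sigma_{k+1}(A)}\ =\ s\hat f.
\]

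\textbf{Main obstacle.} The two interlacing steps and the assumed SRR bounds are routine; the only place where some thought is required is recognizing that (\ref{eq:afact1}) delivers $\sigma_1(S)\ge\sigma_{k+1}(A)$ essentially for free via Eckart--Young on the induced rank-$k$ approximation. Note that the second part of (\ref{eq:ssrrlu}) (the bound on $\sigma_j(\hat S)$) is not used in proving this particular lemma; it is presumably needed later when the subset-update step is iterated.
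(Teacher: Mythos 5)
Your proof is correct and follows essentially the same route as the paper: both bounds come from submatrix interlacing combined with (\ref{eq:svsrrlu}) and the first part of (\ref{eq:ssrrlu}), with the one nontrivial ingredient being $\sigma_1(S)\geq\sigma_{k+1}(A)$. The paper derives that inequality from Weyl's inequality applied to the splitting $A=\tilde A+P\bigl(\begin{smallmatrix}0&0\\0&S\end{smallmatrix}\bigr)$ rather than citing Eckart--Young, but the two are interchangeable here, and your remark that the $\sigma_j(\hat S)$ bound in (\ref{eq:ssrrlu}) is not needed for this lemma is also accurate.
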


\begin{proof}
By (\ref{eq:svsrrlu}) and the interlacing property of singular values,
\[
\sigma_{k}(A_{11})\geq\frac{\sigma_{k}(A)}{f},\quad\sigma_{1}(S_{22})\leq\sigma_{1}(S)\leq\sigma_{k+1}(A)f,
\]
which yield\[
\mu\geq\frac{\sigma_{k}(A)/f}{\sigma_{k+1}(A)f}\geq\frac{1}{f^{2}}.
\]
Similarly, by the interlacing property of singular values and (\ref{eq:ssrrlu}),
\[
\sigma_{k}(A_{11})\leq\sigma_{k}(A),\quad\sigma_{1}(S_{22})\geq\frac
{\sigma_{1}(S)}{\hat{f}}\geq\frac{\sigma_{k+1}(A)}{\hat{f}},
\]
where the result $\sigma_{1}(S)\geq\sigma_{k+1}(A)$ directly follows from Weyl's inequality or \cite[Theorem 3.3.16]{hor91}:
\begin{align*}
\sigma_{k+1}(A)  &  =\sigma_{k+1}\left(  P\left(
\begin{array}
[c]{c}I\\
E
\end{array}
\right)  \left(
\begin{array}
[c]{cc}A_{11} & A_{12}\end{array}
\right)  +P\left(
\begin{array}
[c]{cc}0 & \\
& S
\end{array}
\right)  \right) \\
&  \leq\sigma_{k+1}\left(  P\left(
\begin{array}
[c]{c}I\\
E
\end{array}
\right)  \left(
\begin{array}
[c]{cc}A_{11} & A_{12}\end{array}
\right)  \right)  +\sigma_{1}\left(  P\left(
\begin{array}
[c]{cc}0 & \\
& S
\end{array}
\right)  \right)  =0+\sigma_{1}(S).
\end{align*}
Then\[
\mu\leq\frac{\sigma_{k}(A)}{\sigma_{k+1}(A)/\hat{f}}=s\hat{f}.
\]

\end{proof}

As a quick note, here $s=\frac{\sigma_{k}(A)}{\sigma_{k+1}(A)}$ reflects the
gap between $\sigma_{k}(A)$ and $\sigma_{k+1}(A)$. Since we seek to expand the
index sets $\mathcal{I}$ and $\mathcal{J}$ (and $k$ hasn't yet reached the
target numerical rank $r$), it is reasonable to regard $s$ as a modest magnitude. Now we are ready to show the singular value bounds.

\begin{proposition}
With the assumptions and notation in Lemma \ref{lem:mu}, (\ref{eq:a1})
satisfies, for $1\leq i\leq k+b$, $1\leq j\leq\min\{m,n\}-k-b$,
\begin{equation}
\sigma_{i}(A_{\widetilde{\mathcal{I}},\widetilde{\mathcal{J}}})\geq
\frac{\sigma_{i}(A)}{\tilde{f}},\quad\sigma_{j}(\hat{S})\leq\sigma
_{k+b+j}(A)\tilde{f}, \label{eq:srrlu1}\end{equation}
where $\tilde{f}=(1+s\hat{f}+s\hat{f}b^{2}c^{2})f^{2}\hat{f}$.
\end{proposition}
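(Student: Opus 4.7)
My plan is to prove the two inequalities separately, since only the second requires real work. For $\sigma_j(\hat{S})$ I simply chain the two SRR estimates: $\sigma_j(\hat{S}) \leq \hat{f}\sigma_{b+j}(S)$ by (\ref{eq:ssrrlu}), and $\sigma_{b+j}(S) \leq f\sigma_{k+b+j}(A)$ by (\ref{eq:svsrrlu}) with the index shifted by $b$, so that $\sigma_j(\hat{S}) \leq f\hat{f}\,\sigma_{k+b+j}(A) \leq \tilde{f}\,\sigma_{k+b+j}(A)$. The last step follows because $\tilde{f} \geq f^2\hat{f} \geq f\hat{f}$ under the standing assumptions $f,\hat{f} \geq 1$.

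For the lower bound $\sigma_i(A_{\widetilde{\mathcal{I}},\widetilde{\mathcal{J}}}) \geq \sigma_i(A)/\tilde{f}$, I would first dispatch the regime $1\leq i\leq k$ by Cauchy interlacing: $A_{11}=A_{\mathcal{I},\mathcal{J}}$ sits as a $k\times k$ submatrix inside the $(k{+}b)\times(k{+}b)$ block $A_{\widetilde{\mathcal{I}},\widetilde{\mathcal{J}}}$, so $\sigma_i(A_{\widetilde{\mathcal{I}},\widetilde{\mathcal{J}}}) \geq \sigma_i(A_{11}) \geq \sigma_i(A)/f \geq \sigma_i(A)/\tilde{f}$ directly from (\ref{eq:svsrrlu}).

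For the harder regime $k<i\leq k+b$, I would exploit the block LU form of the pivot block that is implicit in the proof of the preceding proposition: writing $\hat{A}_{12} = A_{\mathcal{I},\widehat{\mathcal{J}}}$ and letting $E_1$ denote the top $b$ rows of $\hat{P}^T E$, one has
$$A_{\widetilde{\mathcal{I}},\widetilde{\mathcal{J}}} = \begin{pmatrix} I & \\ E_1 & I \end{pmatrix}\begin{pmatrix} A_{11} & \\ & S_{22} \end{pmatrix}\begin{pmatrix} I & A_{11}^{-1}\hat{A}_{12} \\ & I \end{pmatrix}.$$
Applying $\sigma_i(LDU) \geq \sigma_i(D)/(\|L^{-1}\|_2\|U^{-1}\|_2)$ reduces the task to two sub-estimates. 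For the block diagonal, a sorting argument combining $\sigma_l(A_{11}) \geq \sigma_l(A)/f$ from (\ref{eq:svsrrlu}) with $\sigma_l(S_{22}) \geq \sigma_l(S)/\hat{f} \geq \sigma_{k+l}(A)/\hat{f}$ from (\ref{eq:ssrrlu})—the last inequality being the Weyl-type bound $\sigma_l(S) \geq \sigma_{k+l}(A)$ proved exactly as in Lemma \ref{lem:mu}—yields $\sigma_i(\mathrm{diag}(A_{11},S_{22})) \geq \sigma_i(A)/\max(f,\hat{f})$ for all $i\leq k+b$. For the triangular factors, I would combine the entrywise bounds $\|E_1\|_{\max}\leq c$ and $\|A_{11}^{-1}\hat{A}_{12}\|_{\max}\leq c$ with Lemma \ref{lem:mu} to translate the crude dimensional estimates on $\|L^{-1}\|_2$ and $\|U^{-1}\|_2$ into spectral quantities featuring $s\hat{f}$.

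The principal obstacle is the bookkeeping needed to recover exactly $\tilde{f} = (1+s\hat{f}+s\hat{f}b^2c^2)f^2\hat{f}$. The base $f^2\hat{f}$ compounds the two nested SRR losses with one factor of $\hat{f}$ absorbed from the block-diagonal sorting; the $s\hat{f}\cdot f^2\hat{f}$ summand represents the spectral content of $\|L^{-1}\|_2\|U^{-1}\|_2$ after Lemma \ref{lem:mu} is invoked to handle the ratio $\sigma_k(A_{11})/\sigma_1(S_{22})$; and the $s\hat{f}b^2c^2\cdot f^2\hat{f}$ summand reflects the max-norm to $2$-norm inflation of the $b$-column or $b$-row off-diagonal blocks whose entries are bounded by $c$—the same mechanism producing $\|\bar E\|_{\max}\leq bc^2+c$ in the preceding proof. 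Keeping these three contributions combinatorially separate, while letting each dimensional constant fold cleanly into $s$, $\hat{f}$, and $bc$, is where the calculation genuinely demands care; everything else is orchestration of singular-value sub-multiplicativity and Weyl inequalities.
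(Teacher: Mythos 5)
Your treatment of the second inequality is exactly the paper's (chain the two SRR losses through $\sigma_{b+j}(S)\le f\sigma_{k+b+j}(A)$), and your interlacing shortcut for $1\le i\le k$ is valid and even gives the sharper constant $f$. You have also correctly identified the skeleton of the hard case: the block LU form of $A_{\widetilde{\mathcal{I}},\widetilde{\mathcal{J}}}$ and the sub-multiplicativity bound $\sigma_i(LDU)\ge\sigma_i(D)/(\Vert L^{-1}\Vert_2\Vert U^{-1}\Vert_2)$. But there is a genuine gap in how you plan to close it, and it sits precisely where you defer to ``bookkeeping.'' With the \emph{unscaled} factors you wrote down, $\Vert L^{-1}\Vert_2=\Vert\left(\begin{smallmatrix}I&\\-E_1&I\end{smallmatrix}\right)\Vert_2$ and $\Vert U^{-1}\Vert_2$ are purely dimensional quantities (at best $1+\Vert E_1\Vert_2$ and $1+\Vert A_{11}^{-1}\hat{A}_{12}\Vert_2$); the ratio $\mu=\sigma_k(A_{11})/\sigma_1(S_{22})$ simply does not appear in them, so Lemma \ref{lem:mu} has nothing to act on and the terms $s\hat{f}$ and $s\hat{f}b^2c^2$ in $\tilde{f}$ cannot materialize from your factorization. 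The device you are missing is the paper's rescaling: insert $\operatorname{diag}(I,\sqrt{\mu}\,I)$ and its inverse so that
\begin{equation*}
A_{\widetilde{\mathcal{I}},\widetilde{\mathcal{J}}}=\begin{pmatrix}I&\\E_1&\tfrac{1}{\sqrt{\mu}}I\end{pmatrix}\begin{pmatrix}A_{11}&\\&\mu S_{22}\end{pmatrix}\begin{pmatrix}I&A_{11}^{-1}\hat{A}_{12}\\&\tfrac{1}{\sqrt{\mu}}I\end{pmatrix},
\end{equation*}
which does two jobs at once: the outer factors now have inverse norms bounded by $\sqrt{1+\mu+\mu\Vert\cdot\Vert_2^2}$, where $\mu\le s\hat{f}$ from Lemma \ref{lem:mu} produces exactly $1+s\hat{f}+s\hat{f}b^2c^2$; and the choice $\sigma_k(A_{11})=\sigma_1(\mu S_{22})$ makes the ordered singular values of the block diagonal split cleanly, with $\sigma_l(\mu S_{22})\ge\mu\sigma_l(S)/\hat{f}\ge\sigma_{k+l}(A)/(f^2\hat{f})$ using the \emph{lower} bound $\mu\ge 1/f^2$ of Lemma \ref{lem:mu}. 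Both halves of that lemma are consumed here; your sketch uses neither in a way that connects to an actual inequality.

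Your fallback diagonal estimate $\sigma_i(\operatorname{diag}(A_{11},S_{22}))\ge\sigma_i(A)/\max(f,\hat{f})$ is in fact true (it follows from a counting argument on the merged singular-value multiset, which you should spell out since without the $\mu$-normalization the two lists do not concatenate in order), but it does not rescue the plan: combined with the unscaled triangular factors it yields a constant of the form $\max(f,\hat{f})\,(1+\Vert E_1\Vert_2)(1+\Vert A_{11}^{-1}\hat{A}_{12}\Vert_2)$, which is not dominated by $\tilde{f}=(1+s\hat{f}+s\hat{f}b^2c^2)f^2\hat{f}$ in general (e.g.\ already for $f=\hat{f}=s=c=1$ and $k$ substantially larger than $b$, since $E_1$ is $b\times k$ rather than $b\times b$), so you would be proving a different, incomparable statement rather than the proposition as claimed. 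The narrative in your final paragraph about where each summand of $\tilde{f}$ ``comes from'' describes the paper's scaled computation, not the one your displayed factorization supports.
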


\begin{proof}
According to (\ref{eq:apart3}), $A_{\widetilde{\mathcal{I}},\widetilde{\mathcal{J}}}=\begin{pmatrix}
I & \\
E_{1} & I
\end{pmatrix}\begin{pmatrix}
A_{11} & \hat{A}_{12}\\
& S_{22}\end{pmatrix}
$. With a strategy like in \cite{srrqr,mir03}, rewrite
$A_{\widetilde{\mathcal{I}},\widetilde{\mathcal{J}}}$ as\[
A_{\widetilde{\mathcal{I}},\widetilde{\mathcal{J}}}=\begin{pmatrix}
I & \\
E_{1} & \frac{1}{\sqrt{\mu}}I
\end{pmatrix}\begin{pmatrix}
A_{11} & \\
& \mu S_{22}\end{pmatrix}\begin{pmatrix}
I & A_{11}^{-1}\hat{A}_{12}\\
& \frac{1}{\sqrt{\mu}}I
\end{pmatrix}
,
\]
where $\mu=\frac{\sigma_{k}(A_{11})}{\sigma_{1}(S_{22})}$. Then
\[\begin{pmatrix}
A_{11} & \\
& \mu S_{22}\end{pmatrix}
=\begin{pmatrix}
I & \\
-\sqrt{\mu}E_{1} & \sqrt{\mu}I
\end{pmatrix}
A_{\widetilde{\mathcal{I}},\widetilde{\mathcal{J}}}\begin{pmatrix}
I & -\sqrt{\mu}A_{11}^{-1}\hat{A}_{12}\\
& \sqrt{\mu}I
\end{pmatrix}
\]
By \cite[Theorem 3.3.16]{hor91},\begin{align*}
\sigma_{i}\left(
\begin{pmatrix}
A_{11} & \\
& \mu S_{22}\end{pmatrix}
\right)   &  \leq\sigma_{i}(A_{\widetilde{\mathcal{I}},\widetilde{\mathcal{J}}})\left\Vert
\begin{pmatrix}
I & \\
-\sqrt{\mu}E_{1} & \sqrt{\mu}I
\end{pmatrix}
\right\Vert _{2}\left\Vert
\begin{pmatrix}
I & -\sqrt{\mu}A_{11}^{-1}\hat{A}_{12}\\
& \sqrt{\mu}I
\end{pmatrix}
\right\Vert _{2}\\
&  \leq\sigma_{i}(A_{\widetilde{\mathcal{I}},\widetilde{\mathcal{J}}})\sqrt{1+\mu+\mu\Vert E_{1}\Vert_{2}^{2}}\sqrt{1+\mu+\mu\Vert A_{11}^{-1}\hat{A}_{12}\Vert_{2}^{2}}\\
&  \leq\sigma_{i}(A_{\widetilde{\mathcal{I}},\widetilde{\mathcal{J}}})(1+s\hat{f}+s\hat{f}b^{2}c^{2}),
\end{align*}
where the last inequality is from Lemma \ref{lem:mu} and the fact that $E_{1}$
and $A_{11}^{-1}\hat{A}_{12}$ are $b\times b$ matrices with entrywise
magnitudes bounded by $c$. Thus,\begin{equation}
\sigma_{i}(A_{\widetilde{\mathcal{I}},\widetilde{\mathcal{J}}})\geq\frac
{1}{1+s\hat{f}+s\hat{f}b^{2}c^{2}}\sigma_{i}\left(  \left(
\begin{array}
[c]{cc}A_{11} & \\
& \mu S_{22}\end{array}
\right)  \right)  . \label{eq:sviaii}\end{equation}

Since $\sigma_{k}(A_{11})=\sigma_{1}(\mu S_{22})$, we get
\begin{align*}
\sigma_{i}\left(
\begin{pmatrix}
A_{11} & \\
& \mu S_{22}\end{pmatrix}
\right)   &  =\sigma_{i}(A_{11}),\quad1\leq i\leq k,\\
\sigma_{k+i}\left(
\begin{pmatrix}
A_{11} & \\
& \mu S_{22}\end{pmatrix}
\right)   &  =\sigma_{i}(\mu S_{22}),\quad1\leq i\leq b.
\end{align*}
By (\ref{eq:ssrrlu}) and Lemma \ref{lem:mu},\begin{equation}
\sigma_{i}(\mu S_{22})\geq\mu\frac{\sigma_{i}(S)}{\hat{f}}\geq\frac{1}{f^{2}\hat{f}}\sigma_{i}(S)\geq\frac{1}{f^{2}\hat{f}}\sigma_{k+i}(A),\quad1\leq i\leq b, \label{eq:svs22}\end{equation}
where the result $\sigma_{i}(S)\geq\sigma_{k+i}(A)$ again follows from Weyl's
inequality or \cite[Theorem 3.3.16]{hor91}. Putting (\ref{eq:svs22}) and the
first inequality in (\ref{eq:svsrrlu}) into (\ref{eq:sviaii}) to get\[
\sigma_{i}(A_{\widetilde{\mathcal{I}},\widetilde{\mathcal{J}}})\geq\frac
{1}{1+s\hat{f}+s\hat{f}b^{2}c^{2}}\frac{1}{f^{2}\hat{f}}\sigma_{i}(A),\quad1\leq i\leq k+b.
\]

Finally, (\ref{eq:ssrrlu}) and (\ref{eq:svsrrlu}) yield
\[
\sigma_{j}(\hat{S})\leq\sigma_{b+j}(S)\hat{f}\leq\sigma_{k+b+j}(A)f\hat
{f},\quad1\leq j\leq\min\{m,n\}-k-b.
\]
Then taking $\tilde{f}=(1+s\hat{f}+s\hat{f}b^{2}c^{2})f^{2}\hat{f}$ to get
(\ref{eq:srrlu1}).
\end{proof}

This proposition indicates that, if (\ref{eq:svsrrlu}) and (\ref{eq:ssrrlu})
are assumed to result from strong rank-revealing factorizations, then
(\ref{eq:a1}) as produced by the subset update method would also enjoy nice
singular value properties like in a strong rank-revealing factorization. This
supports the effectiveness of performing the subset update. Although here we
obtain (\ref{eq:svsrrlu}) and (\ref{eq:ssrrlu}) through the much more economic SRR\ factorizations coupled
with the Nystr\"{o}m method, it would be natural
to use subset updates to quickly get the expanded index set
$\widetilde{\mathcal{I}}$ (from the original index set $\mathcal{I}$). The
SRR\ factorizations are only applied to blocks with column sizes $b$ instead
of $ib$ in step $i$.

In a nutshell, the subset update process starts from a row skeleton
$A_{\mathcal{I},:}$, samples the Schur complement $S$, and produces an
expanded row skeleton $A_{\widetilde{\mathcal{I}},:}$ and the basis matrix
$\tilde{U}$ in (\ref{eq:approx1}). The process is outlined in Algorithm
\ref{alg:set-upd}.

\begin{algorithm}
[!h]\caption{Subset update}\label{alg:set-upd}

\begin{algorithmic}
[1]\Procedure{[$\widetilde{\mathcal{I}},\tilde{E}$]$=$ \sf Set-Upd}{$A$, $\mathcal{I}$, $E$, $\widehat{\mathcal{J}}$}

\State Get $\mathcal{L}$ as in (\ref{eq:jtilde})

\State$S_{:,\mathcal{L}}\leftarrow A_{\{1:m\}\backslash\mathcal{I},\widehat{\mathcal{J}}}-EA_{\mathcal{I},\widehat{\mathcal{J}}}$\Comment{Sampling the Schur complement}

\State$[\mathcal{K},\hat{E}]\leftarrow$ \textsf{SRR}$(S_{:,\mathcal{L}})$

\State$\left(
\begin{array}
[c]{c}E_{1}\\
E_{2}\end{array}
\right)  \leftarrow\left(
\begin{array}
[c]{c}E_{\mathcal{K},:}\\
E_{\{1:m-k\}\backslash\mathcal{K},:}\end{array}
\right)  $\Comment{$m-k$: row size of $S$ and $E$}

\State$\bar{E}\leftarrow E_{2}-\hat{E}E_{1}$

\State$\mathcal{\widehat{I}}\leftarrow(\{1:m\}\backslash\mathcal{I)}\circ\mathcal{K}$

\State$\widetilde{\mathcal{I}}\mathcal{\leftarrow I}\cup\mathcal{\widehat{I}}$,\quad$\tilde{E}\leftarrow\left(
\begin{array}
[c]{cc}\bar{E} & \hat{E}\end{array}
\right)  $

\EndProcedure

\end{algorithmic}
\end{algorithm}

Such a subset update strategy can also be applied to expand the column index
set $\mathcal{J}$. That is, when $\mathcal{I}$ is expanded into $\mathcal{I}\cup\widehat{\mathcal{I}}$, we can apply the strategy above with $\mathcal{J}$
replaced by $\mathcal{I}$, $\widehat{\mathcal{J}}$ replaced by
$\widehat{\mathcal{I}},$ and relevant columns replaced by rows.

We then incorporate the subset update strategy into the basic HAN\ scheme.
There are two ways to do so with different performance (see Algorithm
\ref{alg:han-update}).

\begin{itemize}
\item \textsf{HAN-U}: This is an HAN scheme with fast\emph{ }updates for both
the row subsets and the column subsets. Thus, both the index sets
$\mathcal{I}$ and $\mathcal{J}$ are expanded through updates. In this scheme,
$|\mathcal{I}|$ and $|\mathcal{J}|$ are each advanced by stepsize $b$ in every
iteration step.

\item \textsf{HAN-A}: This is an HAN scheme with aggressive updates where
only, say, the column index set $\mathcal{J}$ is updated. The row index set
$\mathcal{I}$ is still updated via the usual SRR pivoting applied to
$A_{:,\widetilde{\mathcal{J}}}$ (line \ref{line:hana} of Algorithm
\ref{alg:han-update}). This scheme potentially expands the index sets much more aggressively. The reason is as follows.
The SRR factorization of
$A_{:,\widetilde{\mathcal{J}}}$ may update $\mathcal{I}$ to a very different
set and the set difference $\widehat{\mathcal{I}}$ (line \ref{line:idiff} of
Algorithm \ref{alg:han-update}) may have size comparable to $|\mathcal{J}|$.
Then, the column subset update is applied based on $A_{\widehat{\mathcal{I}},:}$ as in line \ref{line:jupd} of Algorithm \ref{alg:han-update} and can
potentially increase the size of $\mathcal{J}$ by $|\widehat{\mathcal{I}}|$.
\end{itemize}

\begin{algorithm}
[!h]\caption{HAN scheme with fast or aggressive subset update}\label{alg:han-update}

\begin{algorithmic}
[1]\Procedure{$\tilde{L}=$ {\sf HAN-U} or {\sf HAN-A}}{$A$, $b$, $\tau$ (or $r$)}

\State$\vdots$\Comment{Keeping lines \ref{line:empty}--\ref{line:ibar} \& replacing lines \ref{line:ie}--\ref{line:jf} of Algorithm \ref{alg:han-b} by the following}

\If{$i=1$ or {\sf HAN-A}}\Comment{Row pivoting in the initial step or in {\sf HAN-A}}

\State\label{line:hana}$[\mathcal{I},E]\leftarrow$ \textsf{SRR}$(A_{:,\widetilde{\mathcal{J}}})$

\Else\Comment{Row subset update in {\sf HAN-U}}

\State\label{line:updi}$[\mathcal{I},E]\leftarrow$ \textsf{Set-Upd}$(A,\mathcal{I},E,\widehat{\mathcal{J}})$

\EndIf

\State\label{line:idiff}$\widehat{\mathcal{I}}\leftarrow\mathcal{I}\backslash\overline{\mathcal{I}}$\Comment{Removal of indices in the previous $\mathcal{I}$ from the new one}

\If{$i=1$}\Comment{Column pivoting in the initial step}

\State$[\mathcal{J},F]\leftarrow$ \textsf{SRR}$(A_{\mathcal{I},:}^{T})$

\Else\Comment{Column subset update}

\State\label{line:jupd}$[\mathcal{J},F]\leftarrow$ \textsf{Set-Upd}$(A^{T},\mathcal{J},F,\widehat{\mathcal{I}})$

\EndIf

\State$\vdots$\Comment{Keeping the remaining lines of Algorithm \ref{alg:han-b}}

\EndProcedure

\end{algorithmic}
\end{algorithm}

If Algorithm \ref{alg:set-upd} is applied at the $i$th iteration of Algorithm \ref{alg:han-update} as in line \ref{line:updi}, the main costs are as follows.

\begin{itemize}
\item The formation of $S_{:,\mathcal{L}}$ costs\[
O\left(  (m-(i-1)b)(i-1)b^{2}\right)  +O\left(  (m-(i-1)b)b\right)  .
\]

\item The SRR\ factorization of $S_{:,\mathcal{L}}$ in (\ref{eq:sskel}) costs
\[
O\left(  b^{2}(m-(i-1)b)\right)  .
\]

\item The computation of (\ref{eq:ehat}) costs\[
O\left(  (m-ib)(i-1)b^{2}\right)  +O\left(  (m-ib)(i-1)b\right)  .
\]

\end{itemize}

These costs add up to $O(ib(2bm+m+2b^{2}))$, where some low-order terms are dropped and $b$ is assumed to be
a small fixed stepsize. The \textsf{HAN-U} scheme applies Algorithm \ref{alg:set-upd} to both the row and the column
subset updates. Accordingly, with $N\approx\frac{r}{b}$ iterations, the total
cost of the \textsf{HAN-U} scheme is
\[
\xi_{\mathsf{HAN-U}}=\sum_{i=1}^{N}O(ib(2b(m+n)+m+n+4b^{2}))=O\left(
r^{2}(m+n)\right)  ,
\]
which is a significant reduction over the cost in (\ref{eq:cost1}).

The cost of the \textsf{HAN-A} scheme depends on how many iteration steps are involved and on how aggressive the index sets advance. In the most aggressive
case, suppose at each step the updated index set $\mathcal{I}$ (or
$\mathcal{J}$) doubles the size from the previous step, then it only needs
$\tilde{N}\approx\log_{2}\frac{r}{b}$ steps. Accordingly, the cost is
\[
\xi_{\mathsf{HAN-A}}=\sum_{i=1}^{\tilde{N}}O\left(  (2^{i-1}b)^{2}(m+n)\right)  =O(r^{2}(m+n)),
\]
which is comparable to $\xi_{\mathsf{HAN-U}}$. Moreover, in such a case,
\textsf{HAN-A} would only need about $b\log_{2}\frac{r}{b}$ column samples
instead of about $r$ samples, which makes it possible to find a low-rank
approximation with a total sample size much smaller than $r$. This has been
observed frequently in numerical tests (see Section \ref{sec:tests}).

\subsection{Stopping criteria and adaptive accuracy control\label{sub:acc}}

The HAN\ schemes output both $\mathcal{I}$ and $\mathcal{J}$ so we may use
$UA_{\mathcal{I},:}$, $A_{:,\mathcal{J}}V^{T}$, or $UA_{\mathcal{I},\mathcal{J}}V^{T}$ as the output low-rank approximation, where $V$ and $U$
look like those in (\ref{eq:v}) and (\ref{eq:srrqr}), respectively. Based on
the differences of the schemes, we use the following choice which works well
in practice:\begin{equation}
\tilde{A}=\left\{
\begin{array}
[c]{cc}A_{:,\mathcal{J}}V^{T}, & \text{\textsf{HAN-B} or \textsf{HAN-U},}\\
UA_{\mathcal{I},:}, & \text{\textsf{HAN-A}.}\end{array}
\right.  \label{eq:atilde}\end{equation}
The reason is as follows. $A_{:,\mathcal{J}}V^{T}$ is the output from the end
of the iteration and is generally a good choice. On the other hand, since
\textsf{HAN-A} obtains $U$ from a full strong rank-revealing factorization
step which potentially gives better accuracy, so $UA_{\mathcal{I},:}$ is used
for \textsf{HAN-A}.

The following stopping criteria may be used in the iterations.

\begin{itemize}
\item The iterations stop when a maximum sample size or a target numerical
rank is reached. The numerical rank is reflected by $|\mathcal{I}|$ or
$|\mathcal{J}|$, depending on the output low-rank form in (\ref{eq:atilde}).

\item In \textsf{HAN-B} and \textsf{HAN-A}, the iteration stops when $\mathcal{I}$ stays the same as in the previous step.

\item Another criterion is when the approximation error is smaller than $\tau
$. It is generally expensive to directly evaluate the error. There are various
ways to estimate it. For example, in \textsf{HAN-U} and \textsf{HAN-A}, we may
use the following bound based on (\ref{eq:afact1}) and (\ref{eq:s}):
\[
\Vert A-\tilde{A}\Vert_{2}=\Vert S\Vert_{2}\approx\left\Vert
\begin{pmatrix}
I\\
\hat{E}\end{pmatrix}\begin{pmatrix}
S_{22} & S_{23}\end{pmatrix}
\right\Vert _{2}\leq\left\Vert
\begin{pmatrix}
I\\
\hat{E}\end{pmatrix}
\right\Vert _{2}\left\Vert
\begin{pmatrix}
S_{22} & S_{23}\end{pmatrix}
\right\Vert _{2}.
\]
(Note the approximations to $A$ and $S$ are obtained by randomization.) We may
also directly estimate the absolute or relative approximation errors without
the need to evaluate $\Vert A-\tilde{A}\Vert_{2}$ or $\Vert A\Vert_{2}$. In the following, we give more details.
\end{itemize}

The following lemmas suggest how to estimate the absolute and relative errors.

\begin{lemma}
\label{lem:errexp}Suppose $k=|\mathcal{J}|$ for the column index set
$\mathcal{J}$ of $A_{11}$ in (\ref{eq:a11}) and $\mathcal{L}$ is given in
(\ref{eq:jtilde}) with $\widehat{\mathcal{J}}$ from a uniform random sampling
of $\{1:n\}\backslash\mathcal{J}$ and $|\widehat{\mathcal{J}}|=b$. Let
$\theta=\frac{n-k}{b}\Vert S_{:,\mathcal{L}}\Vert_{F}^{2}$ and $\mathcal{E}=A-UA_{\mathcal{I},:}$. Then\begin{equation}
\operatorname{E}(\theta)=\Vert\mathcal{E}\Vert_{F}^{2}. \label{eq:errest1}\end{equation}
If (\ref{eq:afact1}) is further assumed to satisfy (\ref{eq:svsrrlu}), then\[
\Pr\left(  \left\vert \theta-\Vert\mathcal{E}\Vert_{F}^{2}\right\vert
\geq((n-k)f^{2})(\sigma_{k+1}(A))^{2}\right)  \leq2\exp\left(  -2b\right)  .
\]

\end{lemma}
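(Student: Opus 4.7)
The plan is to exploit the identity $\mathcal{E} = A - UA_{\mathcal{I},:}$ together with the block structure in (\ref{eq:afact1}) to reduce the statement to a concentration result for uniform sampling of columns of the Schur complement $S$. First I would observe that, by construction, $UA_{\mathcal{I},\mathcal{J}} = A_{:,\mathcal{J}}$, so the columns of $\mathcal{E}$ indexed by $\mathcal{J}$ are identically zero; meanwhile, using the permutation $P$ from (\ref{eq:afact1}), the columns of $\mathcal{E}$ indexed by $\{1{:}n\}\setminus\mathcal{J}$ are obtained by padding the columns of $S$ with zeros in the rows belonging to $\mathcal{I}$. Consequently $\|\mathcal{E}\|_F^2 = \|S\|_F^2$ and, by the two-level selection convention (\ref{eq:jtilde}), $\|S_{:,\mathcal{L}}\|_F^2 = \sum_{j \in \widehat{\mathcal{J}}} \|\mathcal{E}_{:,j}\|_2^2$.

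For the expectation identity (\ref{eq:errest1}), I would enumerate $\widehat{\mathcal{J}} = \{j_1,\dots,j_b\}$ drawn uniformly from $\{1{:}n\}\setminus\mathcal{J}$ and set $Y_i = \|\mathcal{E}_{:,j_i}\|_2^2$. Each $Y_i$ has mean $\frac{1}{n-k}\sum_{j\in\{1:n\}\setminus\mathcal{J}}\|\mathcal{E}_{:,j}\|_2^2 = \frac{1}{n-k}\|\mathcal{E}\|_F^2$, so linearity of expectation gives $\operatorname{E}(\theta) = \frac{n-k}{b}\cdot b\cdot\frac{1}{n-k}\|\mathcal{E}\|_F^2 = \|\mathcal{E}\|_F^2$, regardless of whether the sampling is with or without replacement.

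For the tail bound, the key input from (\ref{eq:svsrrlu}) is the upper singular-value estimate $\sigma_1(S) \le \sigma_{k+1}(A)\,f$. Since every column satisfies $\|\mathcal{E}_{:,j}\|_2^2 \le \|S\|_2^2 \le f^2\sigma_{k+1}(A)^2$, the random variables $Y_1,\dots,Y_b$ take values in $[0, f^2\sigma_{k+1}(A)^2]$. Applying Hoeffding's inequality (its bounded-difference/sampling-without-replacement form applies equally well here) to $\frac{1}{b}\sum_i Y_i$, which has mean $\frac{\|\mathcal{E}\|_F^2}{n-k}$, with deviation parameter $t = f^2\sigma_{k+1}(A)^2$, produces a tail bound of $2\exp(-2b t^2/(f^2\sigma_{k+1}(A)^2)^2) = 2\exp(-2b)$. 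Multiplying the deviation by $n-k$ (to match $\theta$ rather than $\theta/(n-k)$) yields exactly the threshold $(n-k)f^2\sigma_{k+1}(A)^2$ stated in the lemma.

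The main obstacle is arguably conceptual rather than technical: one must carefully track how the two-level index convention (\ref{eq:jtilde}) lines up the columns of $S$ with those of $\mathcal{E}$, and one must justify using Hoeffding even if the sampling is without replacement (which is standard but easy to overlook). Everything else is a routine combination of the block identity for $\mathcal{E}$, linearity of expectation, and a Hoeffding estimate applied to bounded i.i.d.-like column norms.
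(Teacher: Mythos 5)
Your proposal is correct and follows essentially the same route as the paper: reduce $\Vert\mathcal{E}\Vert_F$ to $\Vert S\Vert_F$ via (\ref{eq:afact1}), compute $\operatorname{E}(\theta)$ by uniform column sampling of $S$, and apply Hoeffding's inequality with the column-norm bound $\Vert S_{:,j}\Vert_2\leq f\sigma_{k+1}(A)$ from (\ref{eq:svsrrlu}). The only cosmetic difference is that the paper phrases the expectation step as $\operatorname{E}(\operatorname{trace}(C^TS^TSC))$ with a column-selector matrix $C$ (citing a trace-estimation result) where you use direct enumeration and linearity; your explicit remark that Hoeffding remains valid for sampling without replacement is a point the paper leaves implicit.
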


\begin{proof}
From (\ref{eq:afact1}), $\Vert\mathcal{E}\Vert_{F}=\Vert S\Vert_{F}$. $S$ has
size $(m-k)\times(n-k)$. $S_{:,\mathcal{L}}$ essentially results from the
uniform sampling of the columns of $S$ with $\mathcal{L}$ in (\ref{eq:jtilde}). Let $C$ be the submatrix formed by the $k$ columns of the order-$(n-k)$
identity matrix corresponding to the column index set $\mathcal{L}$. Then\begin{align}
S_{:,\mathcal{L}}  &  =SC,\label{eq:c}\\
\operatorname{E}(\Vert S_{:,\mathcal{L}}\Vert_{F}^{2})  &  =\operatorname{E}(\operatorname{trace}(S_{:,\mathcal{L}}^{T}S_{:,\mathcal{L}}))=\operatorname{E}(\operatorname{trace}(C^{T}S^{T}SC))\nonumber\\
&  =\frac{b}{n-k}\operatorname{trace}(S^{T}S)=\frac{b}{n-k}\Vert S\Vert
_{F}^{2},\nonumber
\end{align}
where the equality from the first line to the second directly comes by the
definition of expectations and is a trace estimation result in \cite{avr11}.
This gives (\ref{eq:errest1}).

If (\ref{eq:afact1}) is further assumed to satisfy (\ref{eq:svsrrlu}), then
$\Vert S_{:,j}\Vert_{2}\leq\Vert S\Vert_{2}\leq f\sigma_{k+1}(A)$. The
probability result can be obtained like in \cite[Theorem 8.2]{avr11} by
writing $\Vert S_{:,\mathcal{L}}\Vert_{F}^{2}$ as the sum of $b$ squares $\Vert S_{:,j}\Vert_{2}^{2}$ and applying Hoeffding's inequality:
\begin{align*}
\Pr\left(  \left\vert \theta-\Vert\mathcal{E}\Vert_{F}^{2}\right\vert
\geq\varepsilon\right)   &  \leq2\exp\left(  -\frac{2b\varepsilon^{2}}{(n-k)^{2}\max_{j}\Vert S_{:,j}\Vert_{2}^{4}}\right) \\
&  \leq2\exp\left(  -\frac{2b\varepsilon^{2}}{(n-k)^{2}\left(  f\sigma
_{k+1}(A)\right)  ^{4}}\right)  .
\end{align*}
Setting $\varepsilon=(n-k)\left(  f\sigma_{k+1}(A)\right)  ^{2}$ to get the result.
\end{proof}

The probability result indicates that, even with small $b$, $\theta$ is a very
accurate estimator for $\Vert\mathcal{E}\Vert_{F}^{2}$ (provided that
(\ref{eq:svsrrlu}) holds). We can further consider the estimation of the
relative error.

\begin{lemma}
With the assumptions and notation in Lemma \ref{lem:errexp}, $H=\frac{n-k}{b}S_{:,\mathcal{L}}S_{:,\mathcal{L}}^{T}$ satisfies\begin{equation}
\frac{\Vert\mathcal{E}\Vert_{2}}{\Vert A\Vert_{2}}\leq\frac{\sqrt
{\Vert\operatorname{E}(H)\Vert_{2}}}{\Vert A_{11}\Vert_{2}}\leq f^{2}\frac{\sigma_{k+1}(A)}{\Vert A\Vert_{2}}. \label{eq:errest2}\end{equation}

\end{lemma}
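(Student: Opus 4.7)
The plan is to recognize that the central quantity $\sqrt{\Vert\operatorname{E}(H)\Vert_{2}}$ collapses to $\Vert\mathcal{E}\Vert_{2}$ itself, at which point both inequalities reduce to elementary singular value comparisons. Concretely, I will first compute $\operatorname{E}(H)$ using exactly the same sampling-matrix trick used in the proof of Lemma~\ref{lem:errexp}. Writing $S_{:,\mathcal{L}}=SC$ as in (\ref{eq:c}), the uniform random choice of $\widehat{\mathcal{J}}$ gives $\operatorname{E}(CC^{T})=\frac{b}{n-k}I$ (this is what drove the identity $\operatorname{E}(\Vert S_{:,\mathcal{L}}\Vert_{F}^{2})=\frac{b}{n-k}\Vert S\Vert_{F}^{2}$ above). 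Hence
\[
\operatorname{E}(H)=\tfrac{n-k}{b}\,S\,\operatorname{E}(CC^{T})\,S^{T}=SS^{T},
\]
so $\sqrt{\Vert\operatorname{E}(H)\Vert_{2}}=\Vert S\Vert_{2}=\Vert\mathcal{E}\Vert_{2}$, where the last equality uses $\mathcal{E}=A-UA_{\mathcal{I},:}$ together with (\ref{eq:afact1}).

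For the first inequality, having rewritten $\sqrt{\Vert\operatorname{E}(H)\Vert_{2}}=\Vert\mathcal{E}\Vert_{2}$, the claim reduces to $\Vert A_{11}\Vert_{2}\leq\Vert A\Vert_{2}$, which is immediate from the interlacing property of singular values since $A_{11}=A_{\mathcal{I},\mathcal{J}}$ is a submatrix of $A$.

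For the second inequality, I will apply (\ref{eq:svsrrlu}) twice: taking $j=1$ gives $\sigma_{1}(S)\leq f\,\sigma_{k+1}(A)$, hence $\sqrt{\Vert\operatorname{E}(H)\Vert_{2}}\leq f\,\sigma_{k+1}(A)$; taking $i=1$ gives $\Vert A_{11}\Vert_{2}=\sigma_{1}(A_{11})\geq\sigma_{1}(A)/f=\Vert A\Vert_{2}/f$, so $1/\Vert A_{11}\Vert_{2}\leq f/\Vert A\Vert_{2}$. Multiplying these yields the factor $f^{2}\sigma_{k+1}(A)/\Vert A\Vert_{2}$ on the right.

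There is essentially no hard step: the only substantive observation is the collapse $\operatorname{E}(H)=SS^{T}$, which mirrors the trace estimation of Lemma~\ref{lem:errexp}; everything else is bookkeeping with the strong rank-revealing bounds in (\ref{eq:svsrrlu}) and elementary singular value interlacing. The one place to be careful is making sure the hypothesis that (\ref{eq:afact1}) satisfies (\ref{eq:svsrrlu}) is invoked explicitly so that both sides of the second inequality are controlled, since without it one only has the deterministic identity part of the statement.
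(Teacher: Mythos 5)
Your argument is correct and follows the paper's own proof essentially verbatim: both compute $\operatorname{E}(H)=SS^{T}$ via $S_{:,\mathcal{L}}=SC$ and $\operatorname{E}(CC^{T})=\frac{b}{n-k}I$, use $\Vert A_{11}\Vert_{2}\leq\Vert A\Vert_{2}$ for the first inequality, and invoke the two bounds in (\ref{eq:svsrrlu}) (for $\sigma_{1}(S)$ and $\sigma_{1}(A_{11})$) for the second. Your closing remark about needing to invoke the (\ref{eq:svsrrlu}) hypothesis explicitly is also consistent with how the paper uses it.
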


\begin{proof}
With (\ref{eq:c}),\[
\operatorname{E}(S_{:,\mathcal{L}}S_{:,\mathcal{L}}^{T})=\operatorname{E}(SCC^{T}S^{T})=S[\operatorname{E}(CC^{T})]S^{T}=\frac{b}{n-k}SS^{T},
\]
where $\operatorname{E}(CC^{T})=\frac{b}{n-r}I$ is simply by the definition of
expectations and has been explored in, say, \cite{dri06}. This leads to\[
\sqrt{\Vert\operatorname{E}(H)\Vert_{2}}=\Vert S\Vert_{2}=\Vert\mathcal{E}\Vert_{2},
\]
which, together with $\Vert A_{11}\Vert_{2}\leq\Vert A\Vert_{2}$, yields the
first inequality in (\ref{eq:errest2}). The second inequality in
(\ref{eq:errest2}) is based on (\ref{eq:svsrrlu}):\[
\frac{\sqrt{\Vert\operatorname{E}(H)\Vert_{2}}}{\Vert A_{11}\Vert_{2}}=\frac{\Vert S\Vert_{2}}{\Vert A_{11}\Vert_{2}}\leq\frac{f\sigma_{k+1}(A)}{\Vert A\Vert_{2}/f}=f^{2}\frac{\sigma_{k+1}(A)}{\Vert A\Vert_{2}}.
\]

\end{proof}

From these lemmas, we can see that the absolute or relative errors in the
low-rank approximation may be estimated by using $S_{:,\mathcal{L}}$ and
$A_{11}$. For example, a reasonable estimator for the relative error of the
low-rank approximation $\tilde{A}$ is given by\begin{equation}
\phi=\sqrt{\frac{n-k}{b}}\frac{\Vert S_{:,\mathcal{L}}\Vert_{2}}{\Vert
A_{11}\Vert_{2}}(\approx\frac{\Vert A-\tilde{A}\Vert_{2}}{\Vert A\Vert_{2}}).
\label{eq:est}\end{equation}
This estimator can be quickly evaluated and only costs $O(b(m-k)+b^{2}+k^{2})$. The cost may be further reduced to $O(b^{2}+k^{2})$ by using $\sqrt
{\frac{n-k}{b}}\frac{\Vert S_{\mathcal{K},\mathcal{L}}\Vert_{2}}{\Vert
A_{11}\Vert_{2}}$ since $S_{\mathcal{K},\mathcal{L}}$ results from a strong
rank-revealing factorization applied to $S_{:,\mathcal{L}}$ and there is a
low-degree polynomial $g$ in $m-k$ and $b$ such that\[
\frac{\Vert S_{:,\mathcal{L}}\Vert_{2}}{g}\leq\Vert S_{\mathcal{K},\mathcal{L}}\Vert_{2}\leq\Vert S_{:,\mathcal{L}}\Vert_{2}.
\]
To enhance the reliability, we may stop the iteration if the estimators return
errors smaller than a threshold consecutively for multiple steps.

\section{Numerical tests\label{sec:tests}}

We now illustrate the performance of the HAN schemes and compare with some
other Nystr\"{o}m-based schemes. The following methods will be tested:

\begin{itemize}
\item \textsf{HAN-B}, \textsf{HAN-U}, \textsf{HAN-A}: the HAN schemes as in
Algorithms \ref{alg:han-b} and \ref{alg:han-update};

\item \textsf{Nys-B}: the traditional Nystr\"{o}m method to produce an
approximation like in (\ref{eq:cur}), where both the row index set
$\mathcal{I}$ and the column index set $\mathcal{J}$ are uniformly and randomly selected;

\item \textsf{Nys-P}: the scheme to find an approximation like in
(\ref{eq:srrqr}) but with $\mathcal{I}$ obtained by one pivoting step
(\ref{eq:aj}) applied to uniformly and randomly selected $A_{:,\mathcal{J}}$;

\item \textsf{Nys-R}: the scheme that extends \textsf{Nys-P} by applying
several steps of alternating direction refinements to improve $\mathcal{I}$
and $\mathcal{J}$ like in lines \ref{line:ie}--\ref{line:jf} of Algorithm
\ref{alg:han-b}, which corresponds to the iterative cross-approximation scheme in \cite{lua20}. (In \textsf{Nys-R}, the accuracy typically stops improving
after few steps of refinement, so we fix the number of refinement steps to be
$10$ in the tests.)

\end{itemize}

In the HAN schemes \textsf{HAN-B}, \textsf{HAN-U}, and \textsf{HAN-A}, the
stepsize $b$ in the progressive column sampling is set to be $b=5$. The
stopping criteria follow the discussions at the beginning of Section
\ref{sub:acc}. Specifically, the iteration stops if the randomized relative
error estimate in (\ref{eq:est}) is smaller than the threshold $\tau=10^{-14}$, or if the total sample size $S$ (in all progressive sampling steps) reaches
a certain maximum, or if the index refinement no longer updates the row index
set $\mathcal{I}$. Since the HAN schemes involve randomized error estimation,
it is possible for some iterations to stop earlier or later than necessary.
Also, \textsf{HAN-B} does not use the fast subset update strategy in Section \ref{sub:upd}, so an extra step is added to estimate the accuracy with (\ref{eq:est}).

The Nystr\"{o}m-based schemes \textsf{Nys-B}, \textsf{Nys-P}, and
\textsf{Nys-R} are directly applied with different given sample sizes $S$ and
do not really have a fast accuracy estimation mechanism. In the plots below
for the relative approximation errors $\frac{\Vert A-\tilde{A}\Vert_{2}}{\Vert A\Vert_{2}}$, the Nystr\"{o}m and HAN schemes are put together for comparison.
However, it is important to distinguish the meanings of the sample sizes $S$
for the two cases along the horizontal axes. For the Nystr\"{o}m schemes, each
$S$ is set directly. For the HAN schemes, each $S$ is the total sample size of
all sampling steps and is reached progressively through a sequence of steps each of stepsize $b$.

In the three Nystr\"{o}m schemes, the cardinality $|\mathcal{I}|$ will be
reported as the numerical rank. In the HAN schemes, the numerical rank will be
either $|\mathcal{I}|$ or $|\mathcal{J}|$, depending on the low-rank form in
(\ref{eq:atilde}).

Since the main applications of the HAN schemes are numerical computations, our
tests below focus on two and three dimensional problems, including some
discretized meshes and some structured matrix problems. We also include an
example related to high-dimensional data sets. The tests are done in Matlab R2019a on a cluster using two 2.60GHz cores and $32$GB of memory.

\begin{example}
\label{ex:mesh}First consider some kernel matrices generated by the evaluation
of various commonly encountered kernel functions evaluated at two
well-separated data points $\mathbf{x}$ and $\mathbf{y}$ in two and three
dimensions. $\mathbf{x}$ and $\mathbf{y}$ are taken from the following four data sets (see Figure \ref{fig:mesh}).

\begin{enumerate}
\renewcommand\labelenumi{(\alph{enumi})}

\item \texttt{Flower}: a flower shape curve, where the $\mathbf{x}$ set is
located at a corner and $|\mathbf{x}|=1018$, $|\mathbf{y}|=13965$.

\item \texttt{FEM}: a 2D\ finite element mesh extracted from the package
MESHPART \cite{meshpart}, where the $\mathbf{x}$ set is surrounded by the
points in $\mathbf{y}$ with $|\mathbf{x}|=821$, $|\mathbf{y}|=4125$. The mesh
is from an example in \cite{kercompr} that shows the usual Nystr\"{o}m method
fails to reach high accuracies for some kernel matrices even with the number
of samples near the numerical rank.

\item \texttt{Airfoil}: an unstructured 2D mesh (airfoil) from the SuiteSparse
matrix collection (http://sparse.tamu.edu), where the $\mathbf{x}$ and
$\mathbf{y}$ sets are extracted so that $\mathbf{x}$ has a roughly rectangular
shape and $|\mathbf{x}|=617$, $|\mathbf{y}|=11078$.

\item \texttt{Set3D}:\ A set of 3D data points extract from the package
DistMesh \cite{distmesh} but with the $\mathbf{y}$ points randomly perturbed
with $|\mathbf{x}|=717$, $|\mathbf{y}|=6650$.
\end{enumerate}

\begin{figure}[ptbh]
\centering\tabcolsep-2pt
\begin{tabular}
[c]{cccc}\begin{tabular}
[c]{c}\includegraphics[height=0.85in]{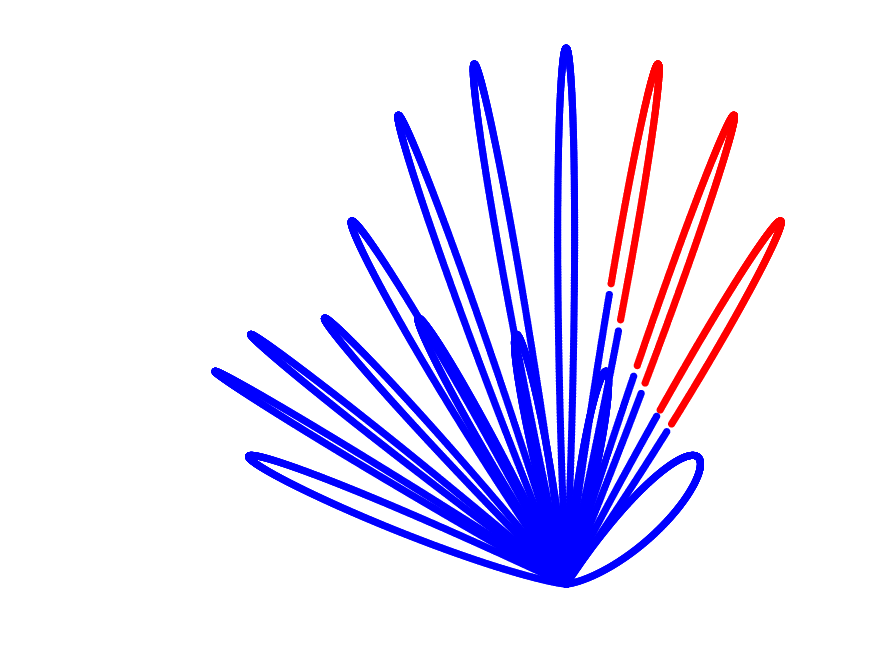}
\end{tabular}
&
\begin{tabular}
[c]{c}\includegraphics[height=0.6in]{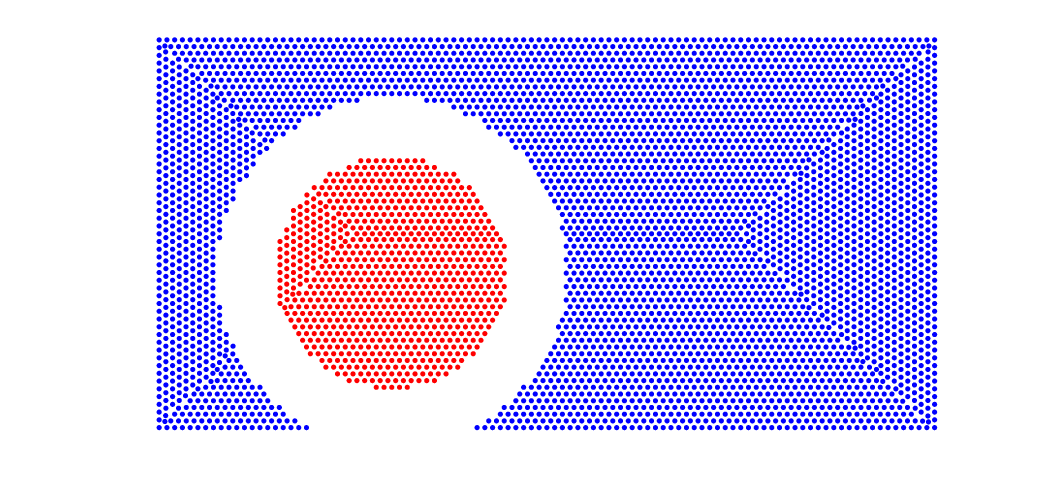}
\end{tabular}
&
\begin{tabular}
[c]{c}\includegraphics[height=1.1in]{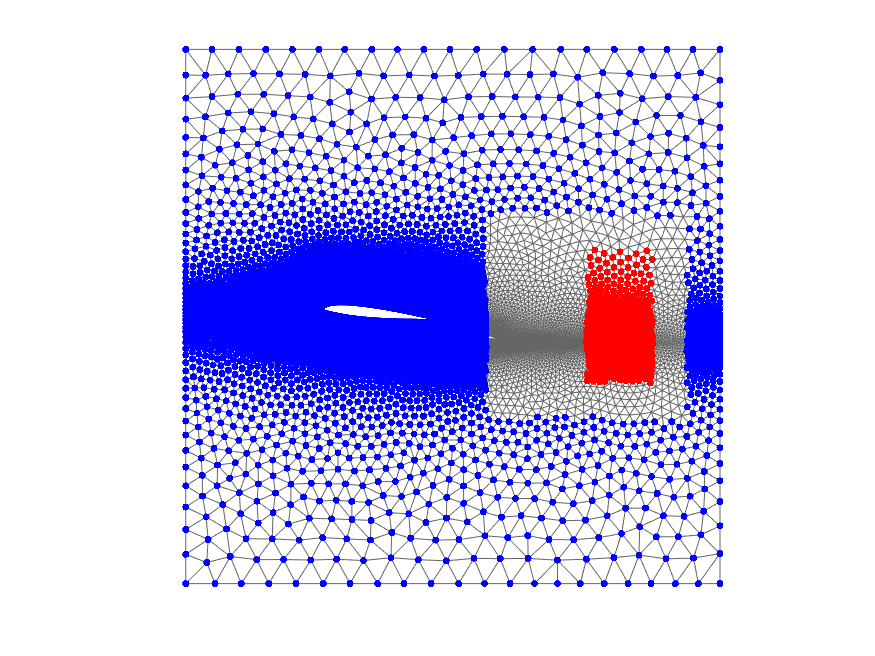}
\end{tabular}
&
\begin{tabular}
[c]{c}\includegraphics[height=1.in]{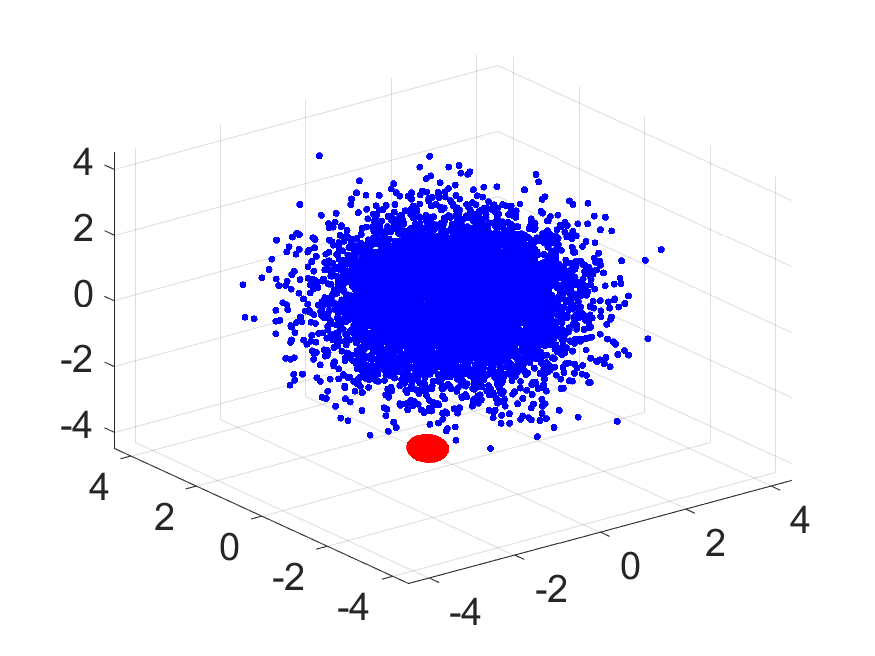}
\end{tabular}
\\
{\small (a) \texttt{Flower}} & {\small (b) \texttt{FEM}} & {\small (c)
\texttt{Airfoil}} & {\small (d) \texttt{Set3D}}\end{tabular}
\caption{Example \ref{ex:mesh}. Data sets under consideration with the
$\mathbf{x}$ and $\mathbf{y}$ sets marked in red and blue, respectively.}\label{fig:mesh}\end{figure}

The points in the data sets are nonuniformly distributed in general, except in
the case \texttt{FEM} where the points are more uniform. The data points in
two dimensions are treated as complex numbers. The setup of the $\mathbf{x}$
and $\mathbf{y}$ sets has the size of $\mathbf{x}$ just several times larger
than the target numerical rank. This is often the case in the FMM and
structured solvers where the corresponding matrix blocks are short and wide off-diagonal blocks that need to be compressed in the hierarchical
approximation of a global kernel matrix (see, e.g.,
\cite{gre87,spectral1d,mfhssrs,fasthss}). We consider several types of kernels
as follows:\begin{align*}
\kappa(x,y)=\  &  \frac{1}{x-y},\quad\frac{1}{(x-y)^{2}},\quad\frac{1}{|x-y|},\quad\sqrt{|x-y|+1},\quad\frac{1}{\sqrt{|x-y|^{2}+1}},\\
&  e^{-|x-y|},\quad e^{-\alpha|x-y|^{2}},\quad\log|x-y|,\quad\tan(x\cdot y+1),
\end{align*}
where $\alpha$ is a parameter. Such kernels are frequently used in the
FMM\ and in structured matrix computations like Toeplitz solutions \cite{toep}
and some structured eigenvalue solvers \cite{gu95,superdc,hsseig}. For data points in three dimensions, $|x-y|$ represents the distance between $x$ and $y$.
\end{example}

For each data set, we apply the methods above to the kernel matrices $A$ as in
(\ref{eq:kermat}) formed by evaluating some $\kappa(x,y)$ at $\mathbf{x}$ and
$\mathbf{y}$. Most of the kernel matrices have modest numerical ranks. The
schemes \textsf{Nys-B}, \textsf{Nys-P}, and \textsf{Nys-R} use sample sizes
$S$ up to $400$ in almost all the tests.
The HAN\ schemes use much smaller sample sizes. \textsf{HAN-B} and
\textsf{HAN-U} use sample sizes $S\leq200$ for most tests, and \textsf{HAN-A}
uses sample sizes $S\leq50$ for all the cases.

For some kernels evaluated at the set \texttt{Flower},
the relative errors $\frac{\Vert A-\tilde{A}\Vert_{2}}{\Vert A\Vert_{2}}$ in
one test run are reported in Figure \ref{fig:flowererr}. With larger $S$, the
error typically gets smaller. However, \textsf{Nys-B} is only able to reach
modest accuracies even if $S$ is quite large. (The error curve nearly
stagnate in the first row of Figure \ref{fig:flowererr} with increasing $S$.)
The accuracy gets better with \textsf{Nys-P} for some cases. \textsf{Nys-R}
can further improve the accuracy. However, they still cannot get accuracy
close to $\tau=10^{-14}$ and their error curves in the second row of Figure
\ref{fig:flowererr} get stuck around some small rank sizes insufficient to reach high accuracies.

In comparison, the HAN schemes usually yield much better accuracies,
especially with \textsf{HAN-B} and \textsf{HAN-A}. \textsf{HAN-U} is often
less accurate than \textsf{HAN-B} but is more efficient because of the fast
subset update. The most remarkable result is from \textsf{HAN-A}, which
quickly reaches accuracies around $10^{-15}$ after few sampling steps (with
small overall sample sizes). The second row of Figure \ref{fig:flowererr} also
includes the scaled singular values $\frac{\sigma_{i}(A)}{\sigma_{1}(A)}$. We
can observe that \textsf{HAN-B} and particularly \textsf{HAN-A} produce
approximation errors with decay patterns very close to that of SVD.

To further confirm the accuracies, we run each scheme 100 times and report the
results in Figure \ref{fig:flower}. In general, we observe that the HAN
schemes are more accurate, especially \textsf{HAN-B} and \textsf{HAN-A}. The
direct outcome from \textsf{HAN-U} is not accurate, but this is likely due to
the quality of the $V$ factor in (\ref{eq:atilde}). In fact, most other
schemes end the iteration with a low-rank approximation in (\ref{eq:atilde})
after one row or column pivoting step by an SRR factorization. Thus, if we
apply an additional row pivoting step to $A_{:,\mathcal{J}}$ at the end of
\textsf{HAN-U }so as to generate a new approximation $UA_{\mathcal{I},:}$ like
in (\ref{eq:atilde}), then the resulting errors of \textsf{HAN-U} (called
\emph{effective} errors in Figure \ref{fig:flower}) are close to those of
\textsf{HAN-B}.

\begin{figure}[ptbh]
\centering\tabcolsep-0.5mm
\begin{tabular}
[c]{cccc}\includegraphics[height=0.97in]{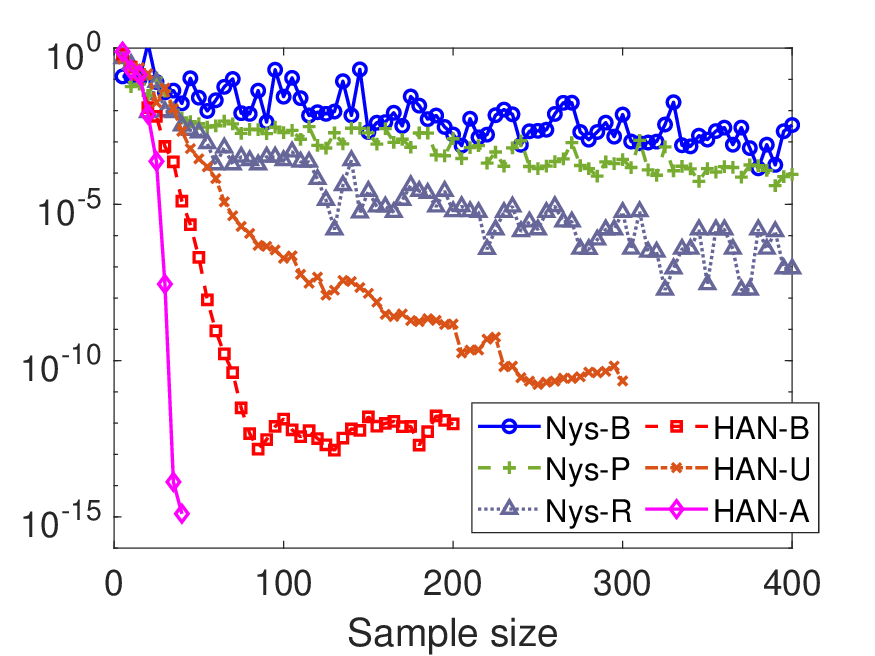} &
\includegraphics[height=0.97in]{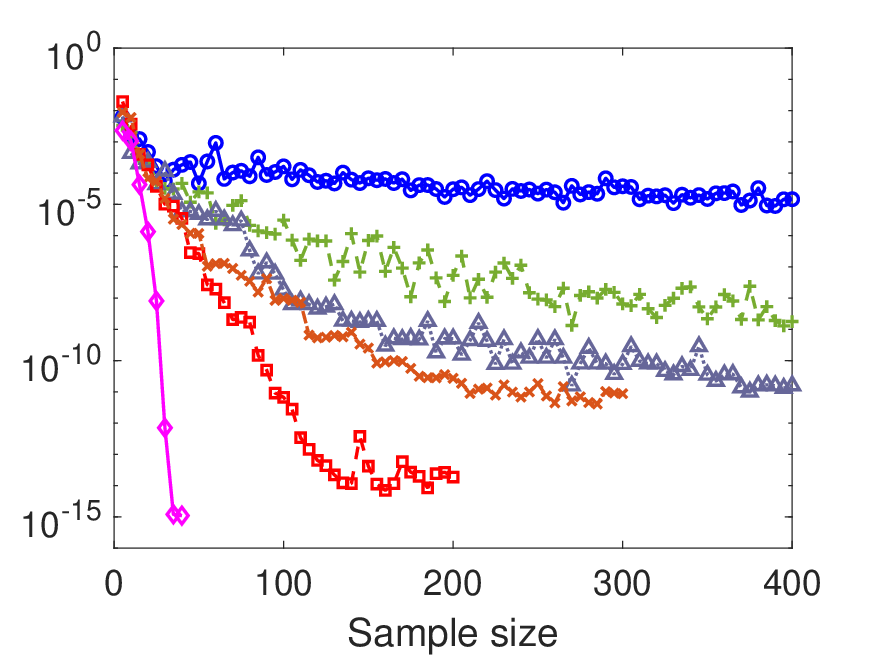} &
\includegraphics[height=0.97in]{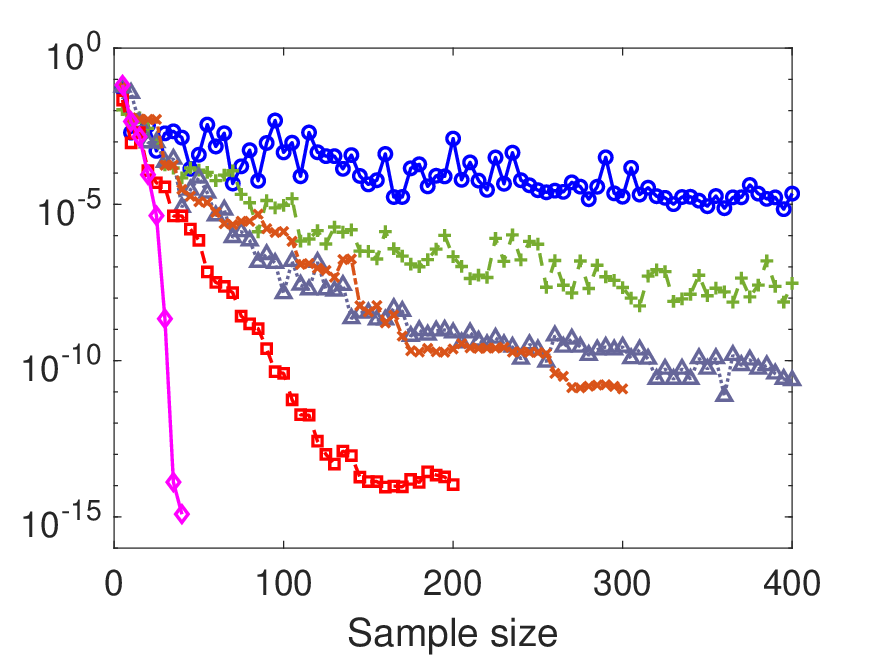} &
\includegraphics[height=0.97in]{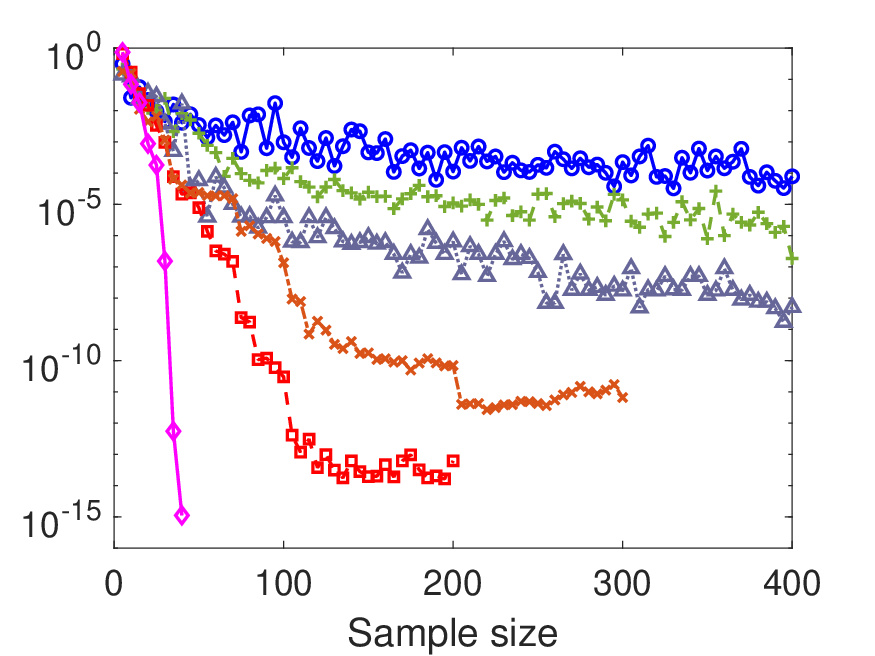}\\
\includegraphics[height=0.97in]{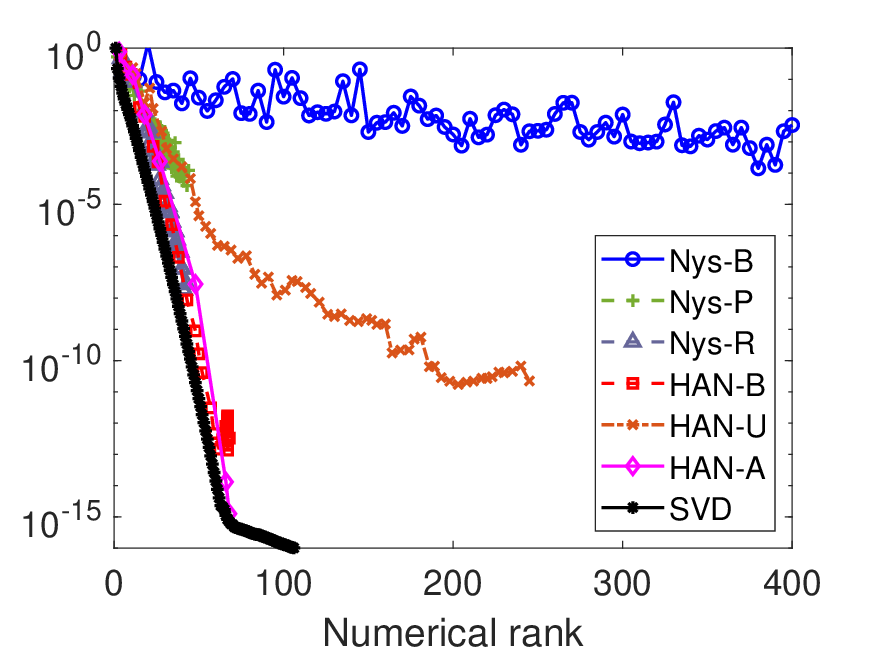} &
\includegraphics[height=0.97in]{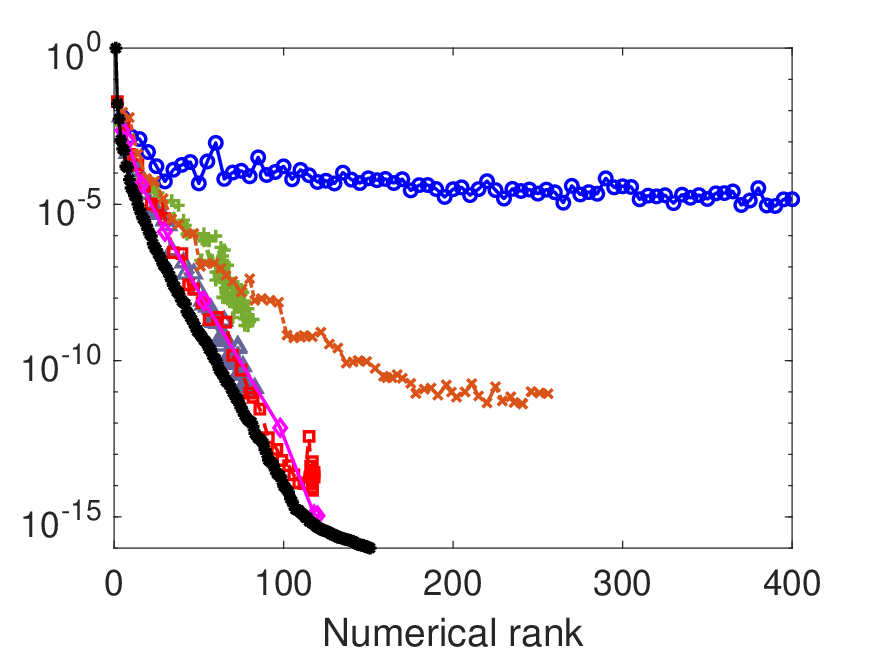} &
\includegraphics[height=0.97in]{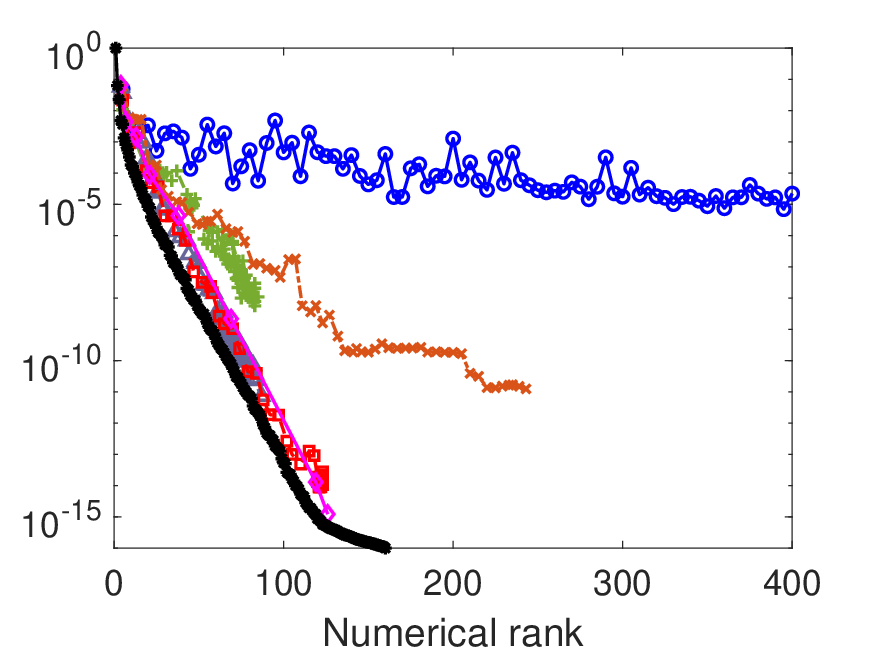} &
\includegraphics[height=0.97in]{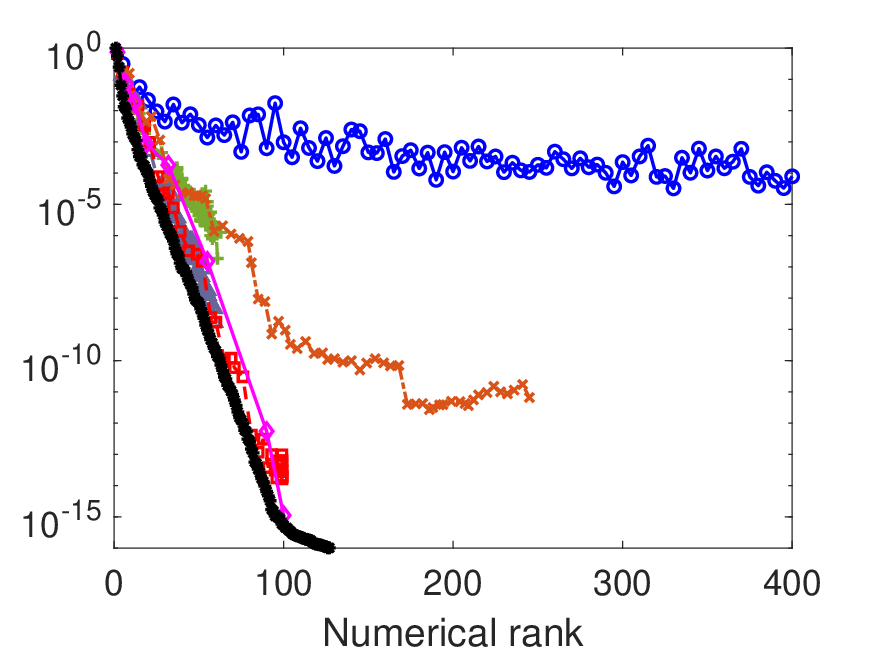}\\
{\small $\kappa(x,y)=\frac{1}{x-y}$} & {\small $\sqrt{|x-y|+1}$} &
{\small $e^{-|x-y|}$} & {\small $\log|x-y|$}\end{tabular}
\caption{Example \ref{ex:mesh} (data set \texttt{Flower}): Low-rank
approximation errors $\frac{\Vert A-\tilde{A}\Vert_{2}}{\Vert A\Vert_{2}}$ as
the sample size $S$ increases in one test, where the second row shows the
errors with respect to the resulting numerical ranks corresponding to the
first row and the SVD line shows the scaled singular values.}\label{fig:flowererr}\end{figure}

\begin{figure}[ptbh]
\centering\tabcolsep1mm
\begin{tabular}
[c]{cccc}\includegraphics[height=1in]{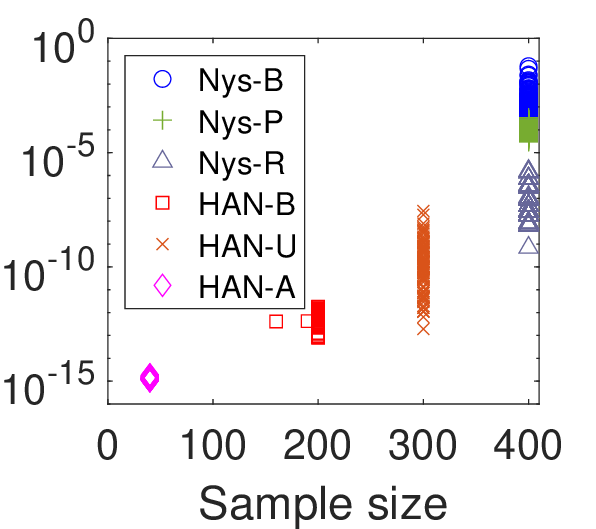} &
\includegraphics[height=1in]{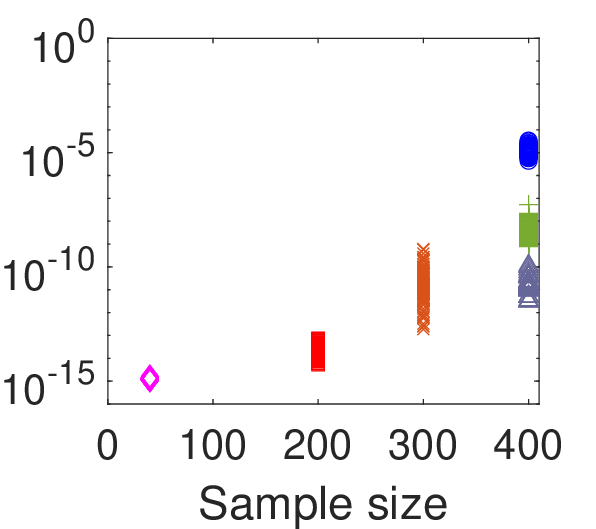} &
\includegraphics[height=1in]{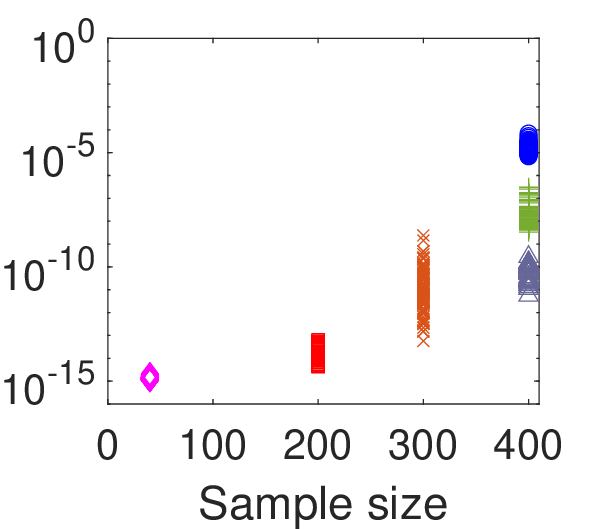} &
\includegraphics[height=1in]{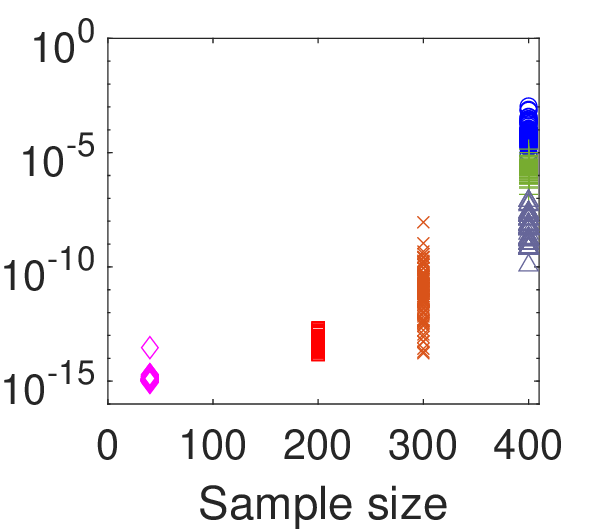}\\
\includegraphics[height=1in]{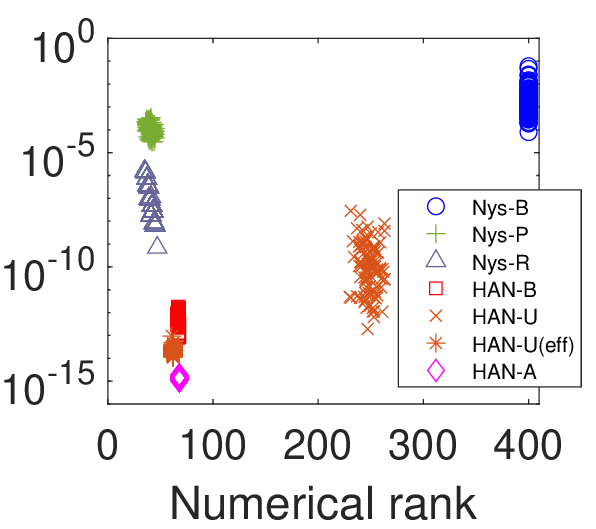} &
\includegraphics[height=1in]{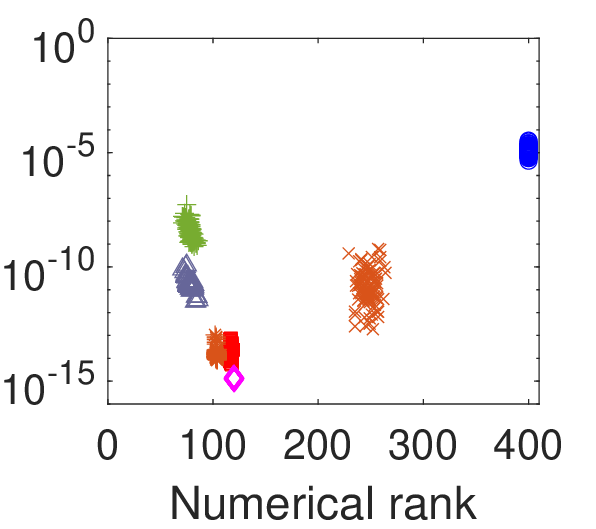} &
\includegraphics[height=1in]{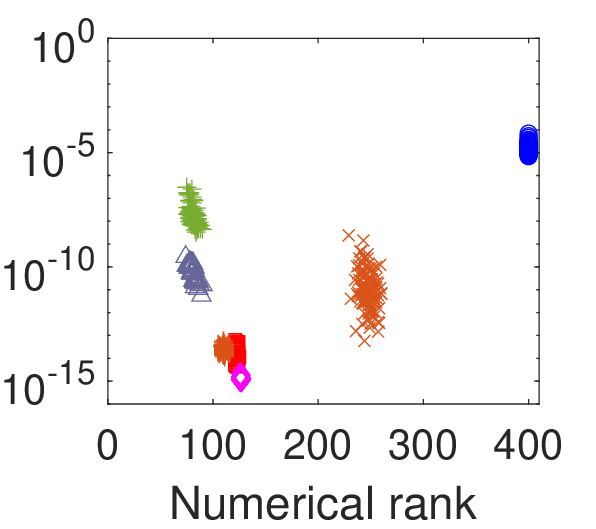} &
\includegraphics[height=1in]{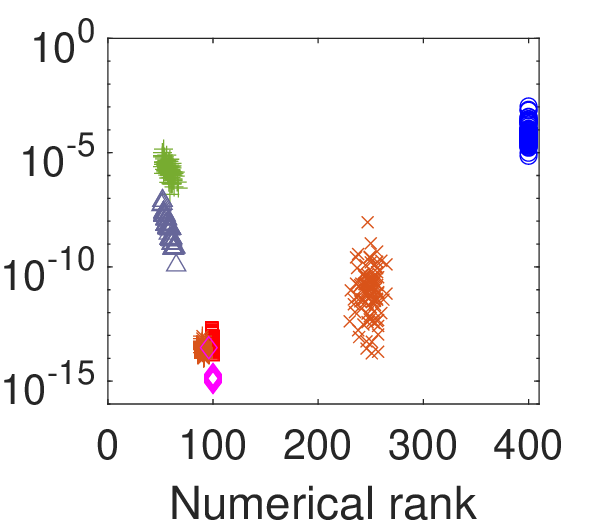}\\
{\small $\kappa(x,y)=\frac{1}{x-y}$} & {\small $\sqrt{|x-y|+1}$} &
{\small $e^{-|x-y|}$} & {\small $\log|x-y|$}\end{tabular}
\caption{Example \ref{ex:mesh} (data set \texttt{Flower}): Relative errors
from running the methods for $100$ times, where \textsf{HAN-U(eff)} is for the
effective errors of \textsf{HAN-U}.}\label{fig:flower}\end{figure}

Similarly, for the other data sets and various different kernel functions, we
have test results as given in Figures \ref{fig:meshparterr}--\ref{ex:mesh3d}.
The results can be interpreted similarly. For some cases, \textsf{Nys-B},
\textsf{Nys-P}, and even \textsf{Nys-R} may be quite inaccurate. One example
is for $\kappa(x,y)=e^{-16|x-y|^{2}}$ in Figures \ref{ex:mesh3derr} and
\ref{ex:mesh3d}, where even \textsf{Nys-R} becomes quite unreliable and
demonstrates oscillatory errors for different $S$ and different tests.

\begin{figure}[ptbh]
\centering\tabcolsep-0.5mm
\begin{tabular}
[c]{cccc}\includegraphics[height=0.97in]{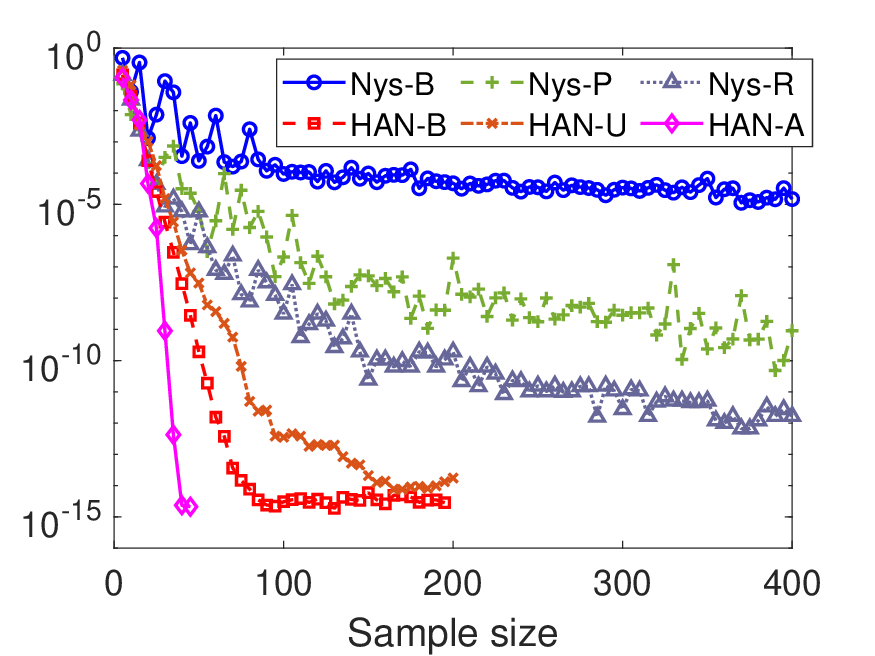} &
\includegraphics[height=0.97in]{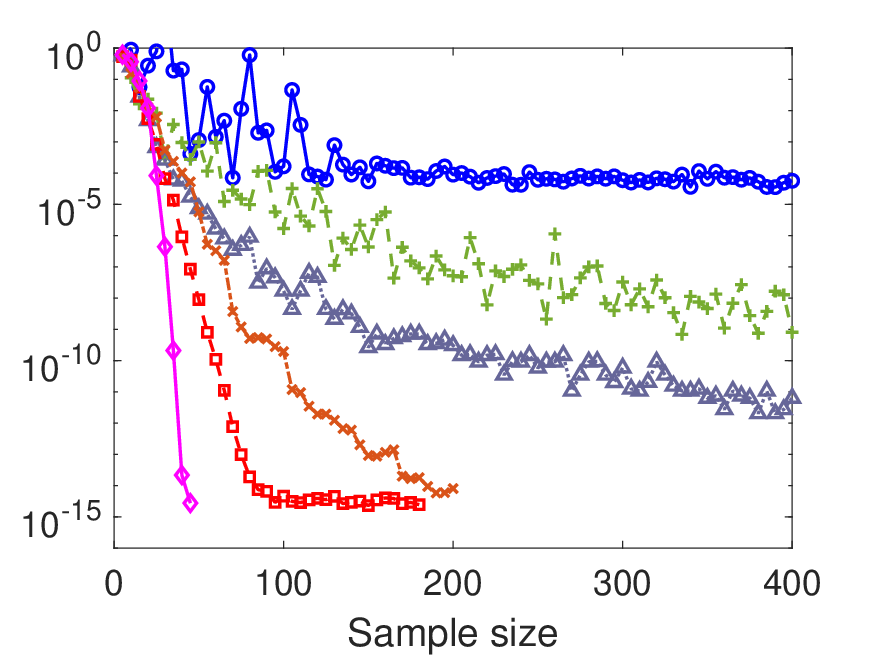} &
\includegraphics[height=0.97in]{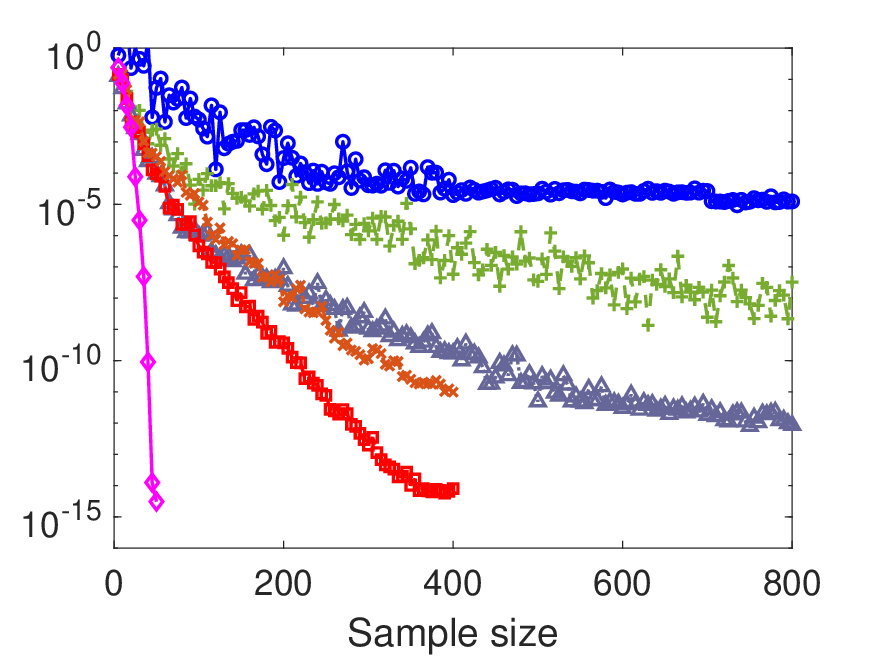} &
\includegraphics[height=0.97in]{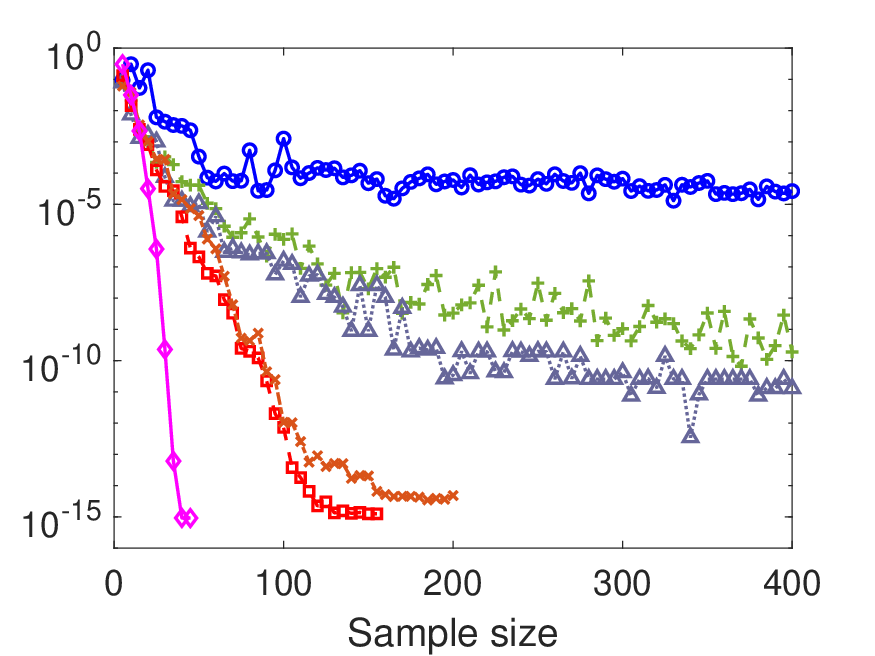}\\
\includegraphics[height=0.97in]{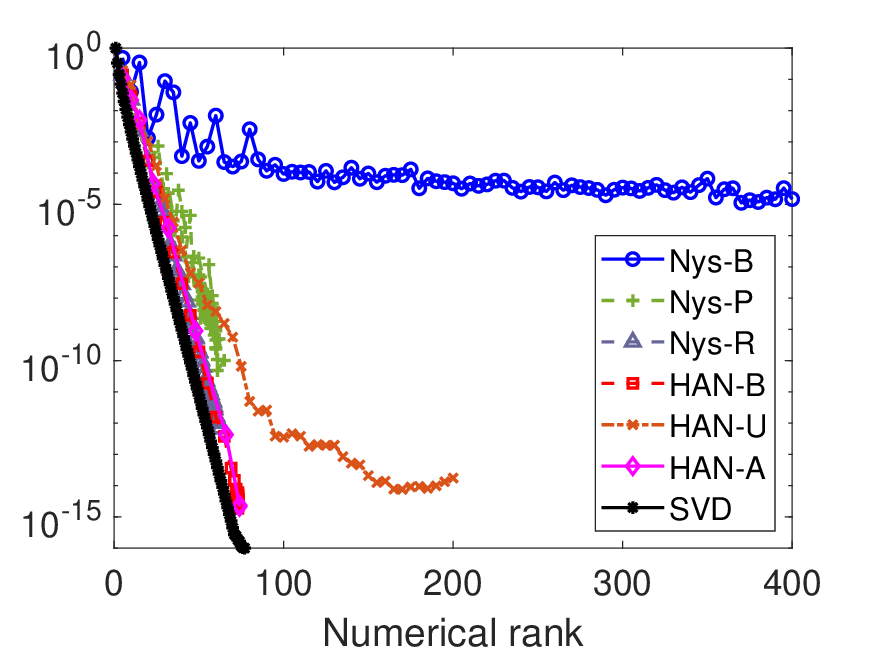} &
\includegraphics[height=0.97in]{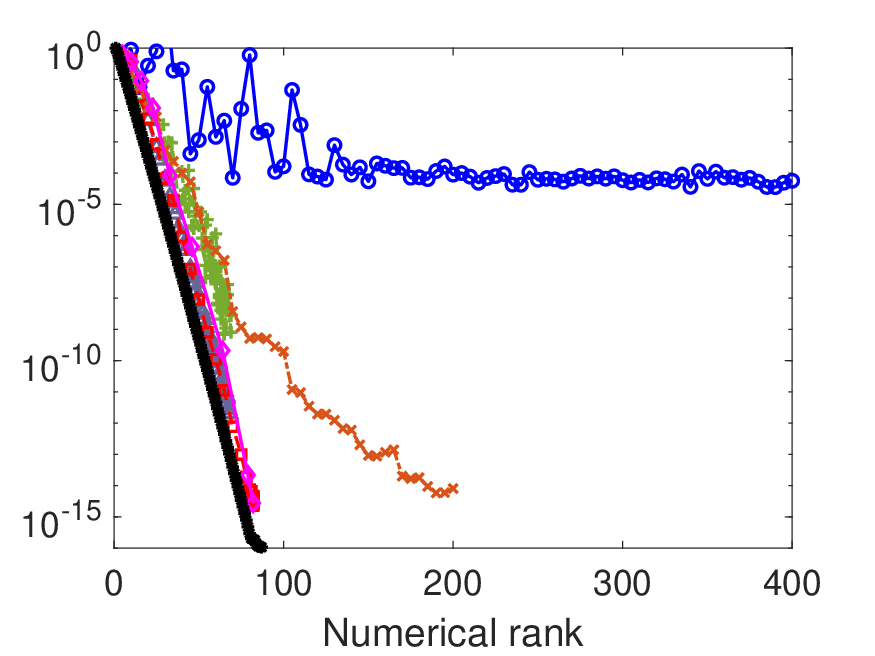} &
\includegraphics[height=0.97in]{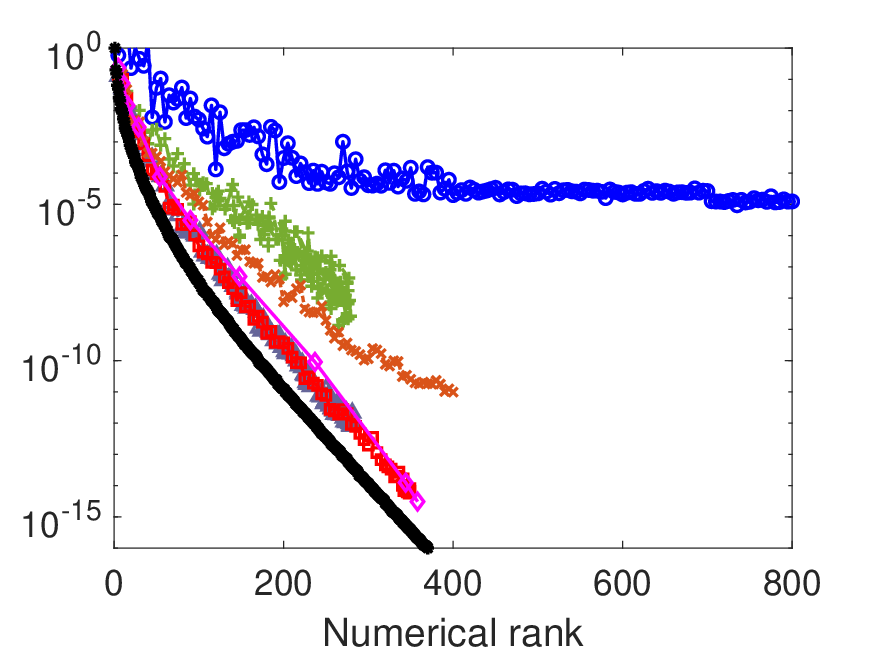} &
\includegraphics[height=0.97in]{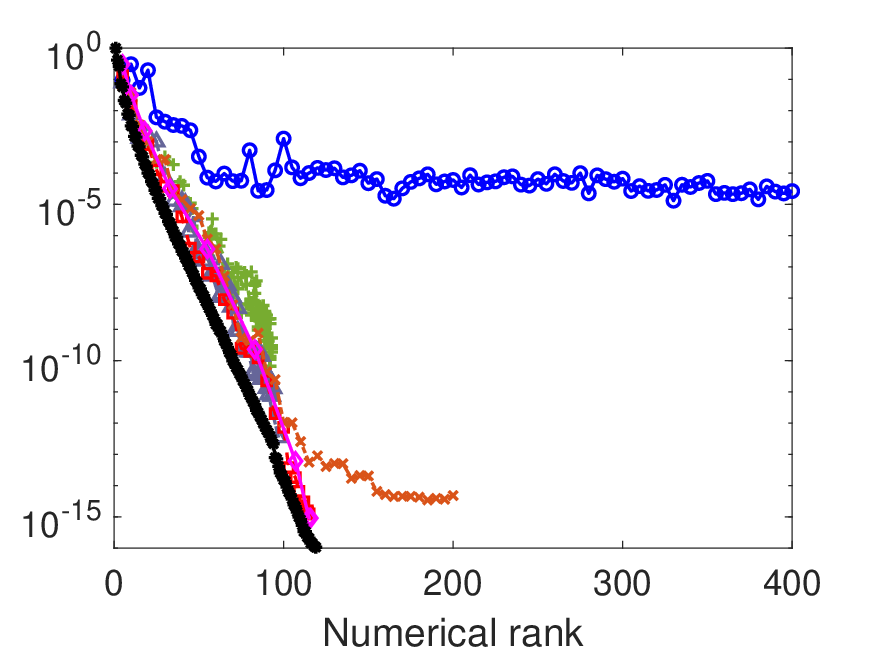}\\
$\kappa(x,y)=\frac{1}{x-y}$ & $\frac{1}{(x-y)^{2}}$ & $\frac{1}{|x-y|}$ &
$\log|x-y|$\end{tabular}
\caption{Example \ref{ex:mesh} (data set \texttt{FEM}): Low-rank approximation
errors $\frac{\Vert A-\tilde{A}\Vert_{2}}{\Vert A\Vert_{2}}$ as the sample
size $S$ increases in one test, where the second row shows the errors with
respect to the resulting numerical ranks corresponding to the first row and
the SVD line shows the scaled singular values.}\label{fig:meshparterr}\end{figure}\begin{figure}[ptbh]
\centering\tabcolsep1mm
\begin{tabular}
[c]{cccc}\includegraphics[height=1in]{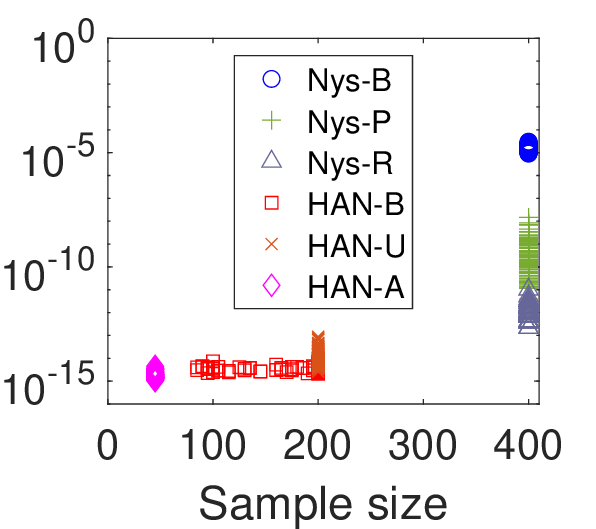} &
\includegraphics[height=1in]{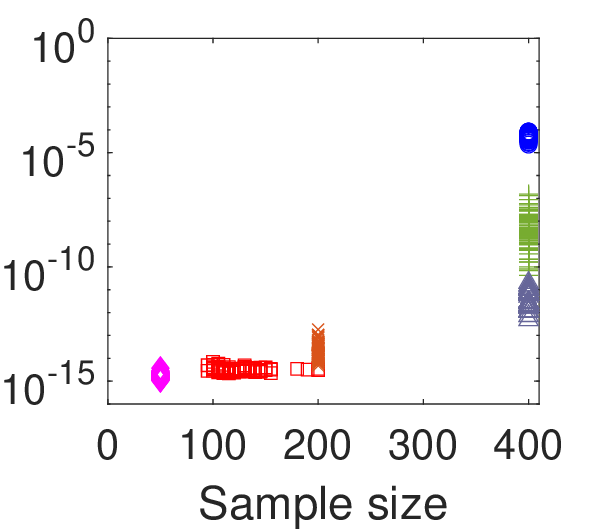} &
\includegraphics[height=1in]{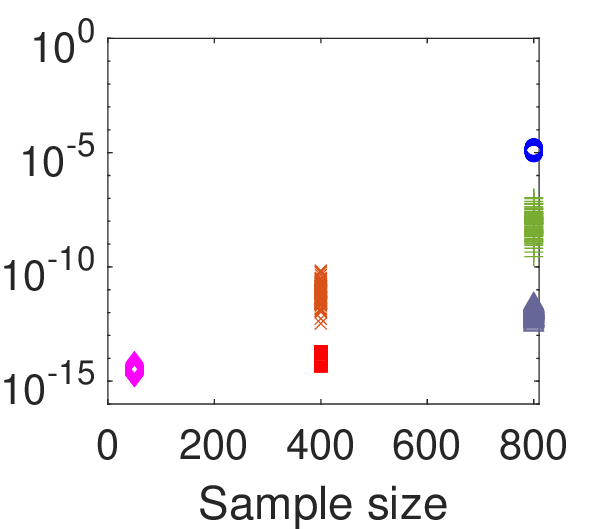} &
\includegraphics[height=1in]{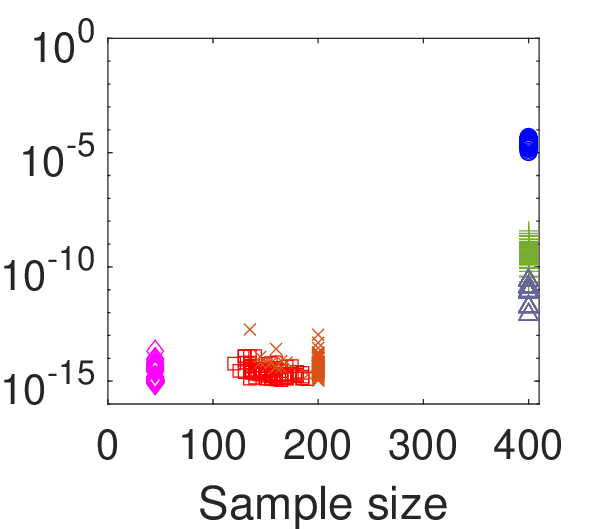}\\
\includegraphics[height=1in]{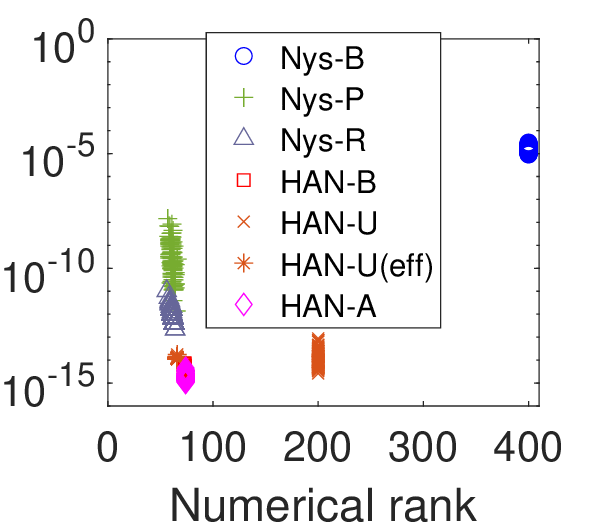} &
\includegraphics[height=1in]{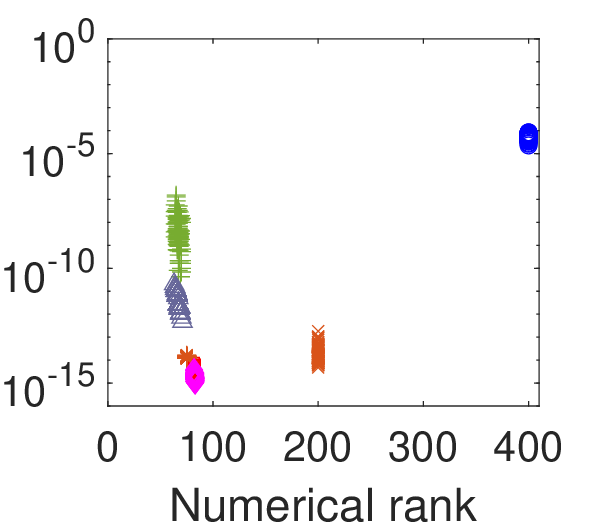} &
\includegraphics[height=1in]{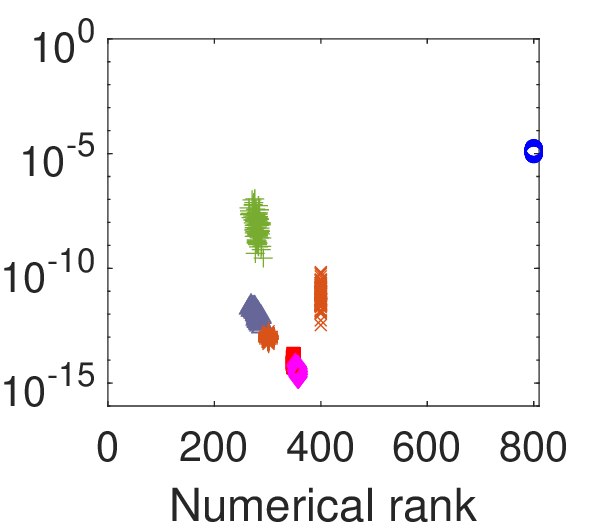} &
\includegraphics[height=1in]{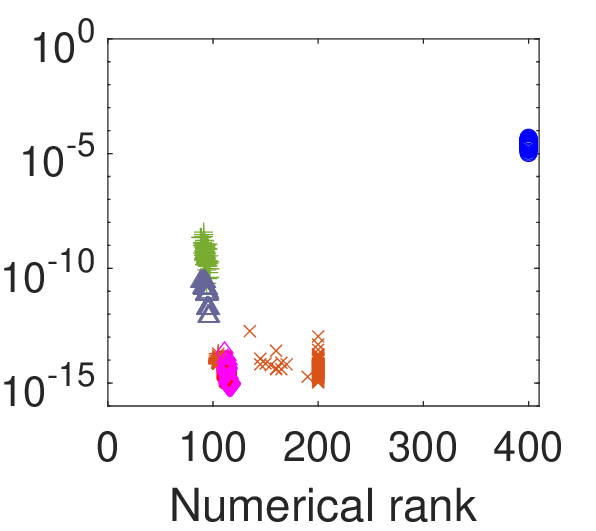}\\
$\kappa(x,y)=\frac{1}{x-y}$ & $\frac{1}{(x-y)^{2}}$ & $\frac{1}{|x-y|}$ &
$\log|x-y|$\end{tabular}
\caption{Example \ref{ex:mesh} (data set \texttt{FEM}): Relative errors from
running the methods for $100$ times, where \textsf{HAN-U(eff)} is for the
effective errors of \textsf{HAN-U}.}\label{fig:meshpart}\end{figure}

\begin{figure}[ptbh]
\centering\tabcolsep-0.5mm
\begin{tabular}
[c]{cccc}\includegraphics[height=0.97in]{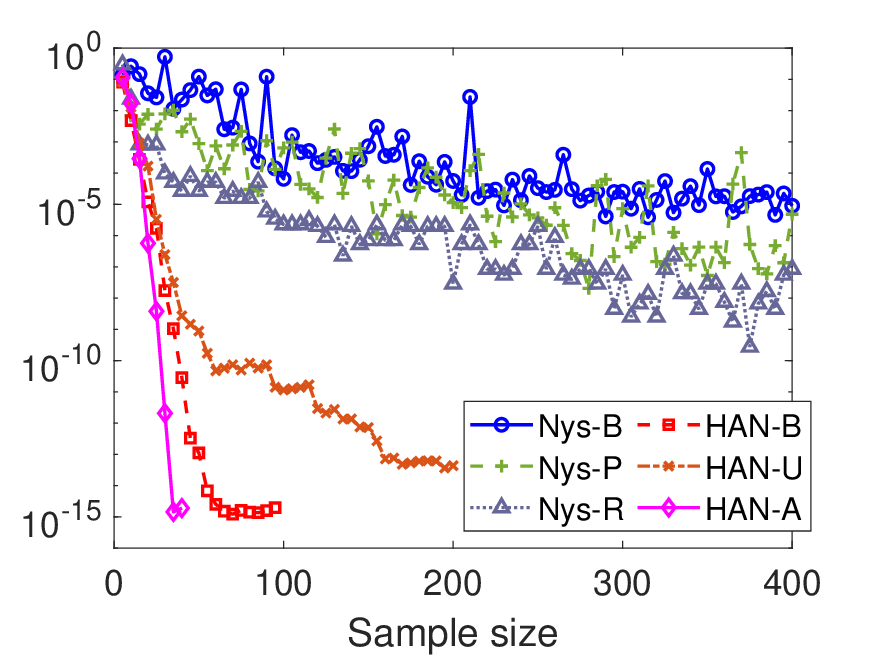} &
\includegraphics[height=0.97in]{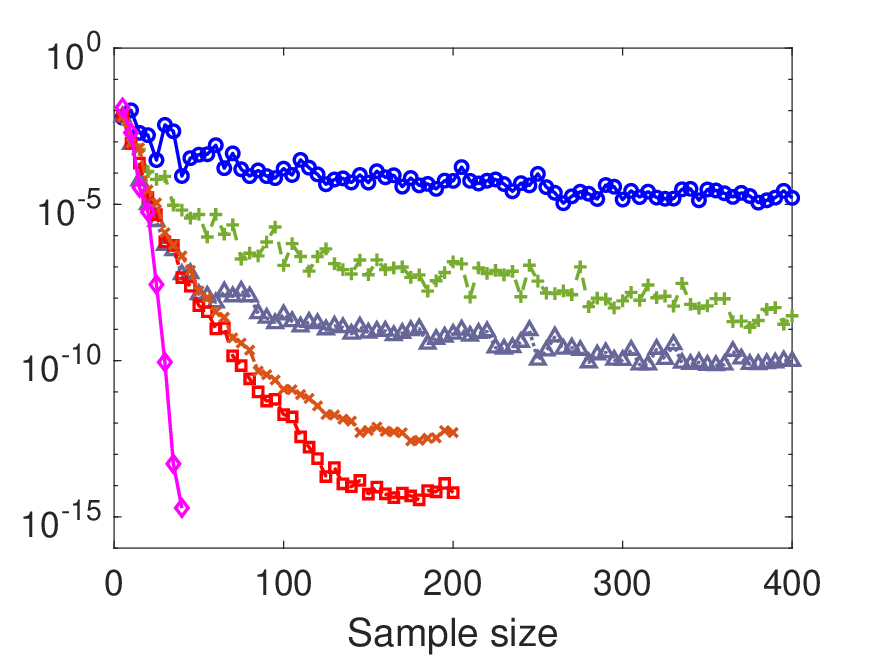} &
\includegraphics[height=0.97in]{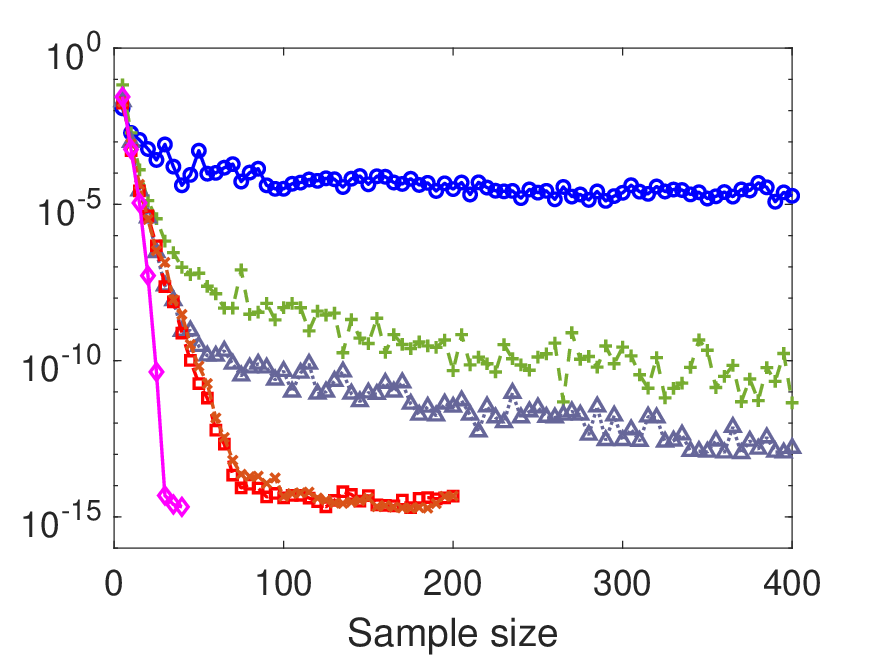} &
\includegraphics[height=0.97in]{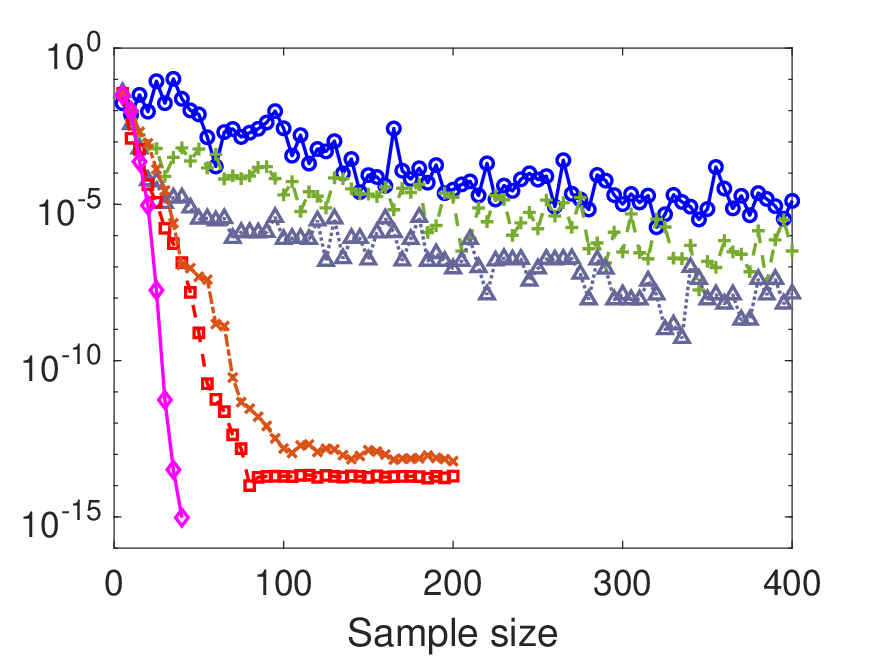}\\
\includegraphics[height=0.97in]{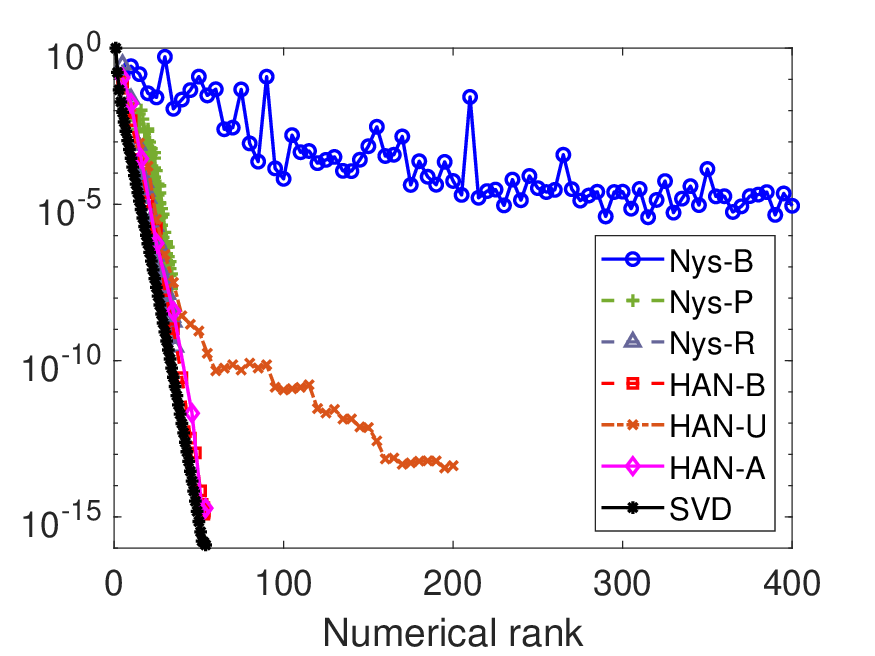} &
\includegraphics[height=0.97in]{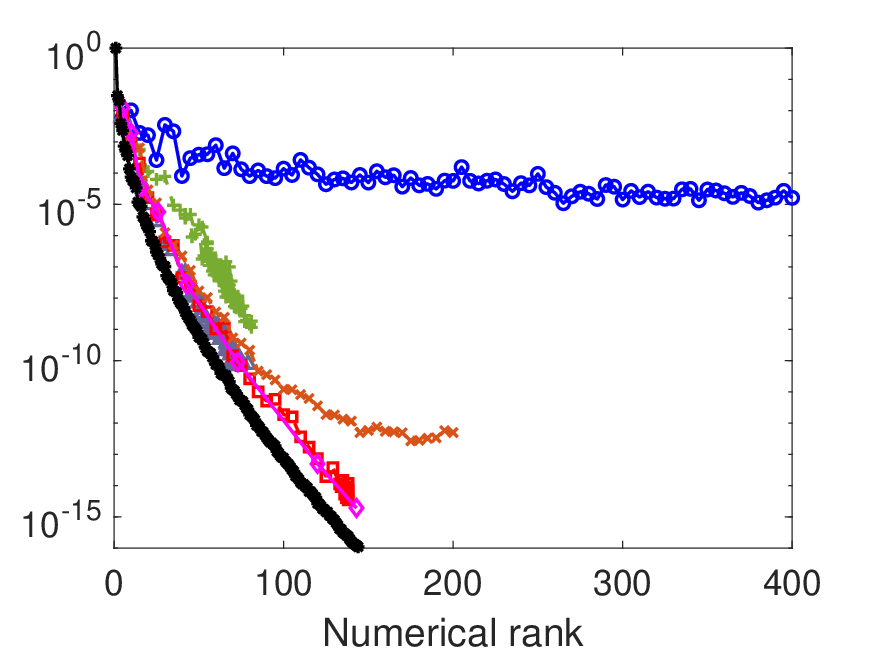} &
\includegraphics[height=0.97in]{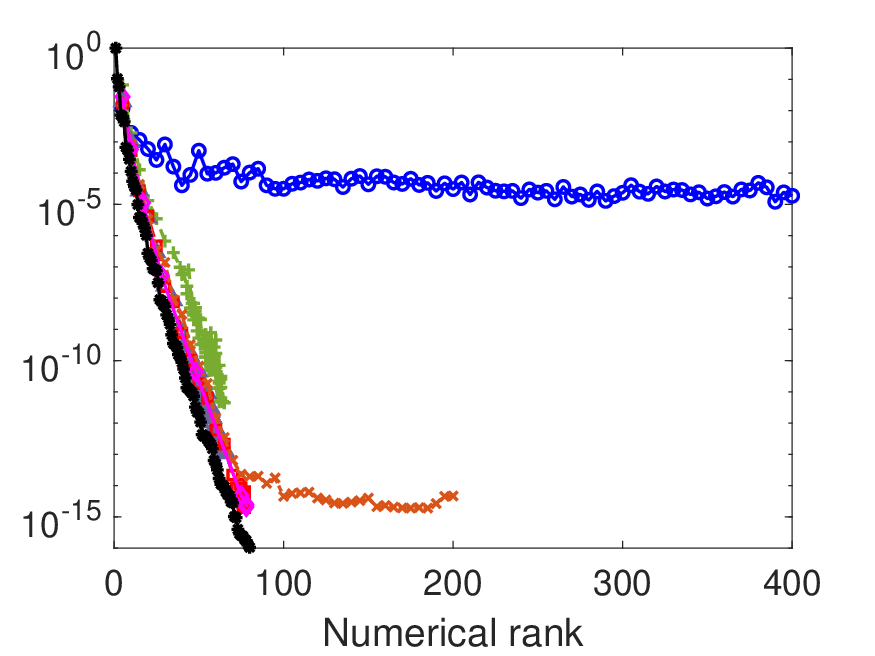} &
\includegraphics[height=0.97in]{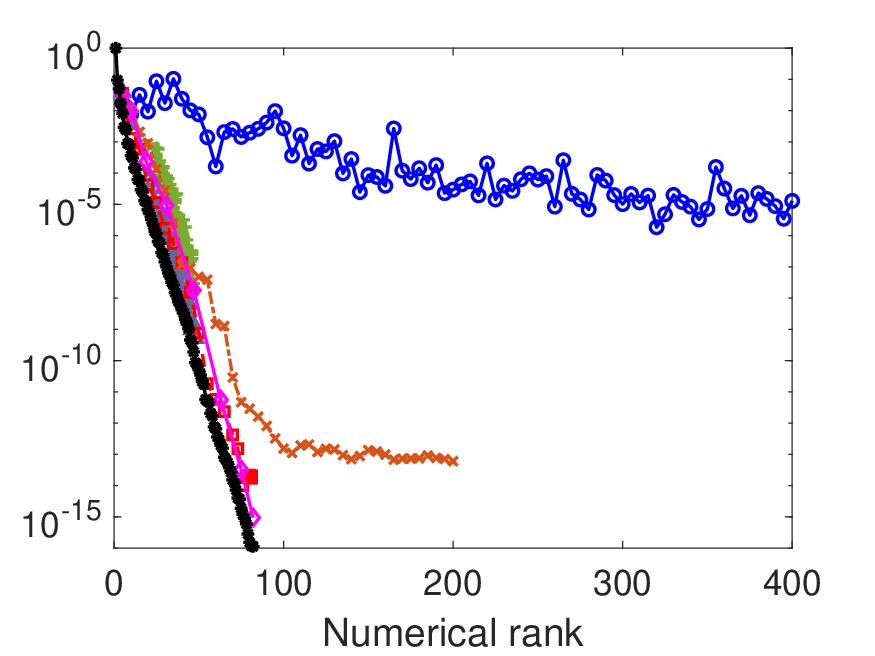}\\
$\kappa(x,y)=\frac{1}{x-y}$ & $\frac{1}{\sqrt{|x-y|^{2}+1}}$ & $e^{-\frac
{|x-y|^{2}}{2}}$ & $\log|x-y|$\end{tabular}
\caption{Example \ref{ex:mesh} (data set \texttt{Airfoil}): Low-rank
approximation errors $\frac{\Vert A-\tilde{A}\Vert_{2}}{\Vert A\Vert_{2}}$ as
the sample size $S$ increases in one test, where the second row shows the
errors with respect to the resulting numerical ranks corresponding to the
first row and the SVD line shows the scaled singular values.}\label{ex:airfoilerr}
\end{figure}
\begin{figure}[ptbh]
\centering\tabcolsep1mm
\begin{tabular}
[c]{cccc}\includegraphics[height=1in]{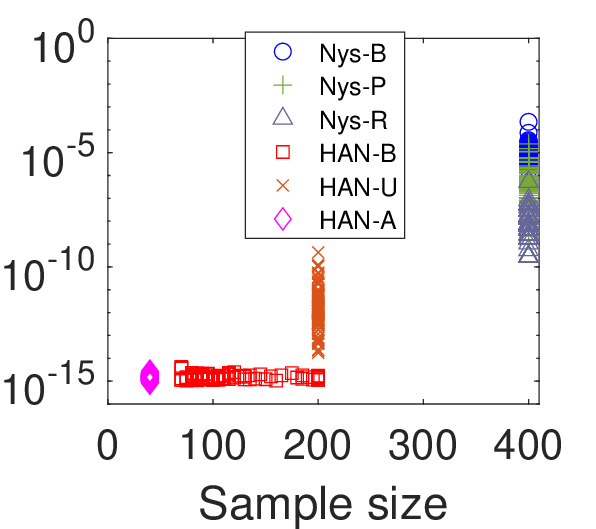} &
\includegraphics[height=1in]{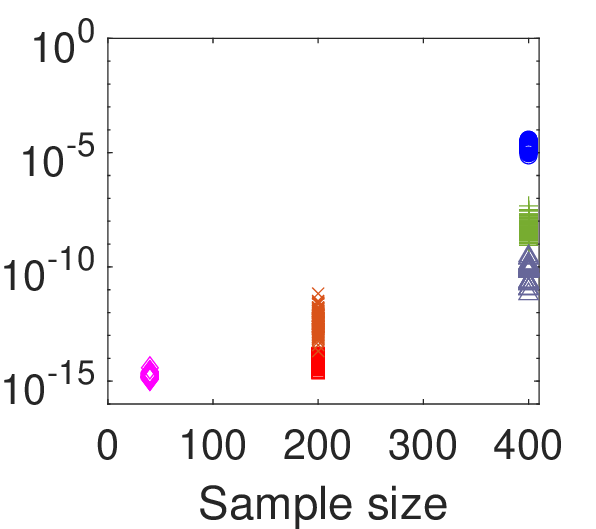} &
\includegraphics[height=1in]{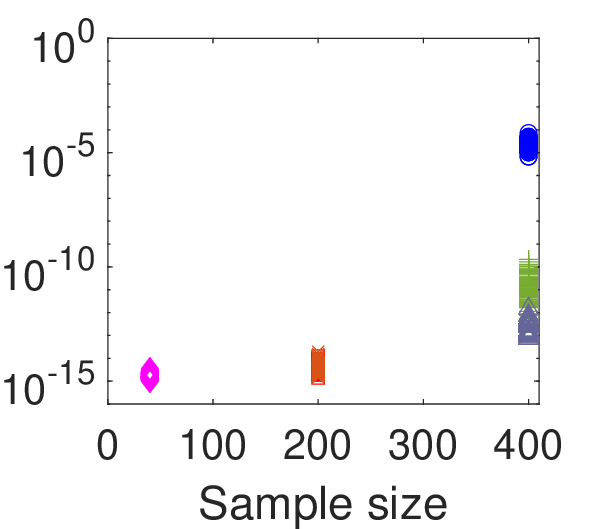} &
\includegraphics[height=1in]{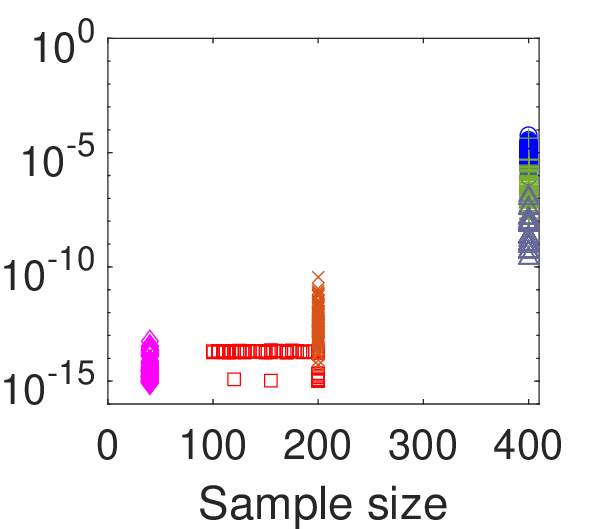}\\
\includegraphics[height=1in]{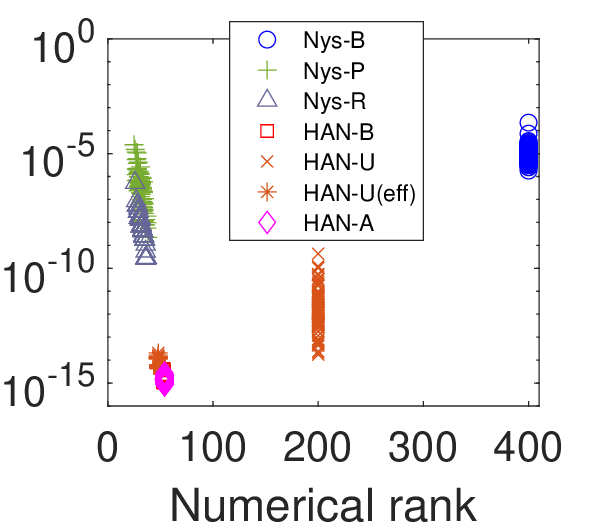} &
\includegraphics[height=1in]{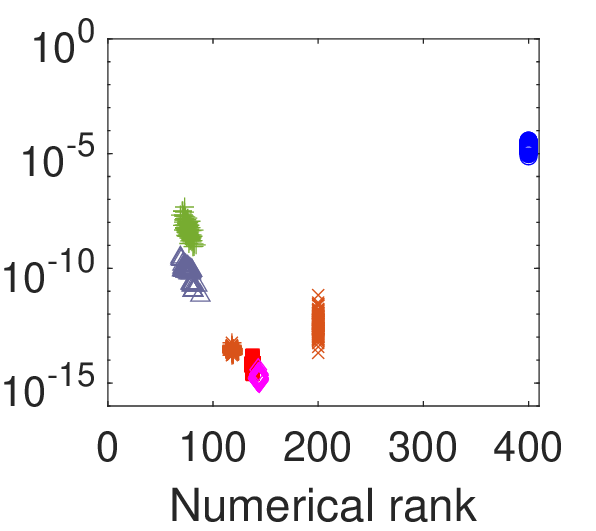} &
\includegraphics[height=1in]{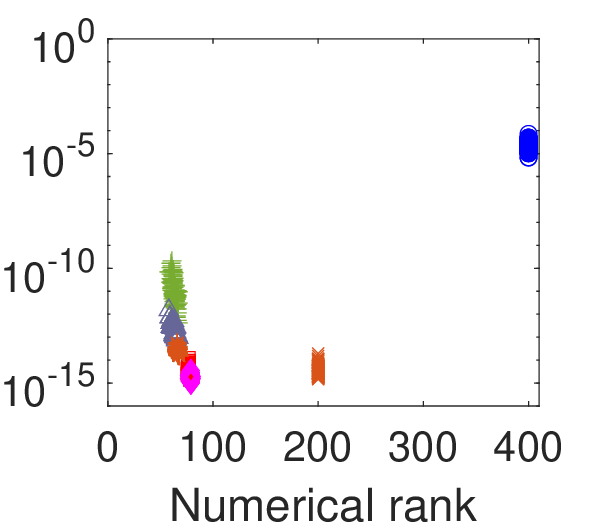} &
\includegraphics[height=1in]{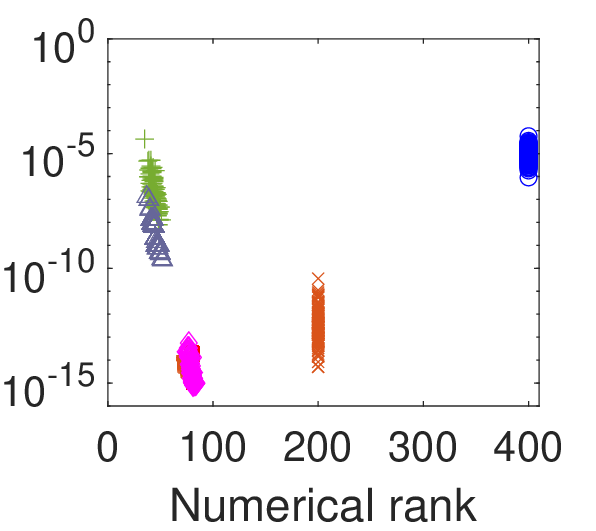}\\
$\kappa(x,y)=\frac{1}{x-y}$ & $\frac{1}{\sqrt{|x-y|^{2}+1}}$ & $e^{-\frac{|x-y|^{2}}{2}}$ & $\log|x-y|$\end{tabular}
\caption{Example \ref{ex:mesh} (data set \texttt{Airfoil}): Relative errors
from running the methods for $100$ times, where \textsf{HAN-U(eff)} is for the
effective errors of \textsf{HAN-U}.}\label{ex:airfoil}\end{figure}

\begin{figure}[ptbh]
\centering\tabcolsep-0.5mm
\begin{tabular}
[c]{cccc}\includegraphics[height=0.97in]{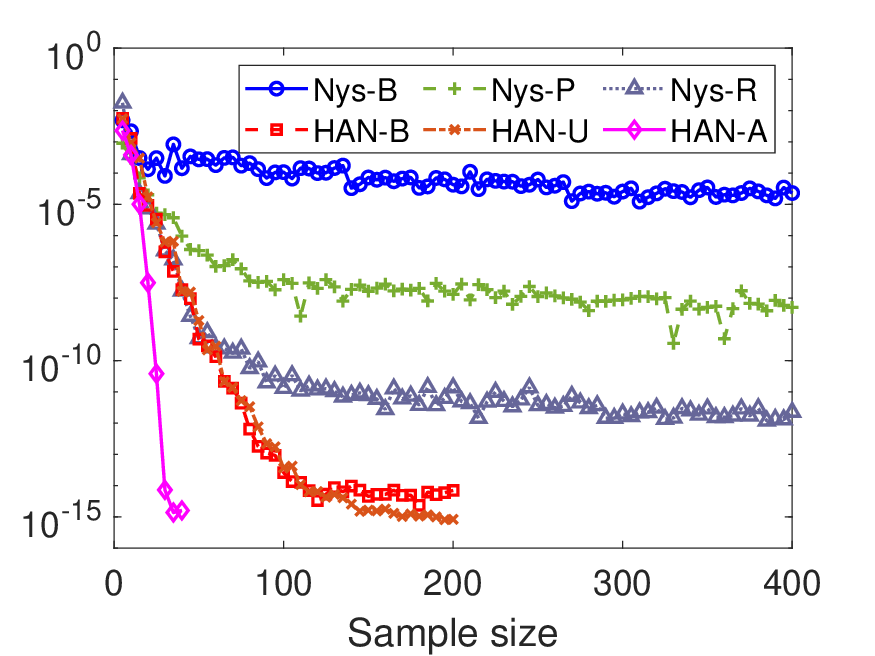} &
\includegraphics[height=0.97in]{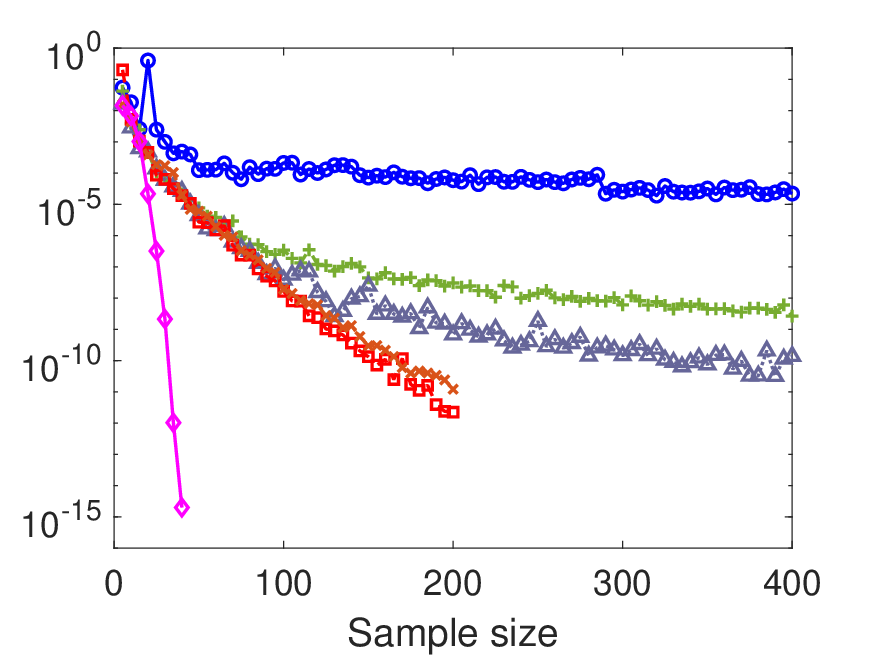} &
\includegraphics[height=0.97in]{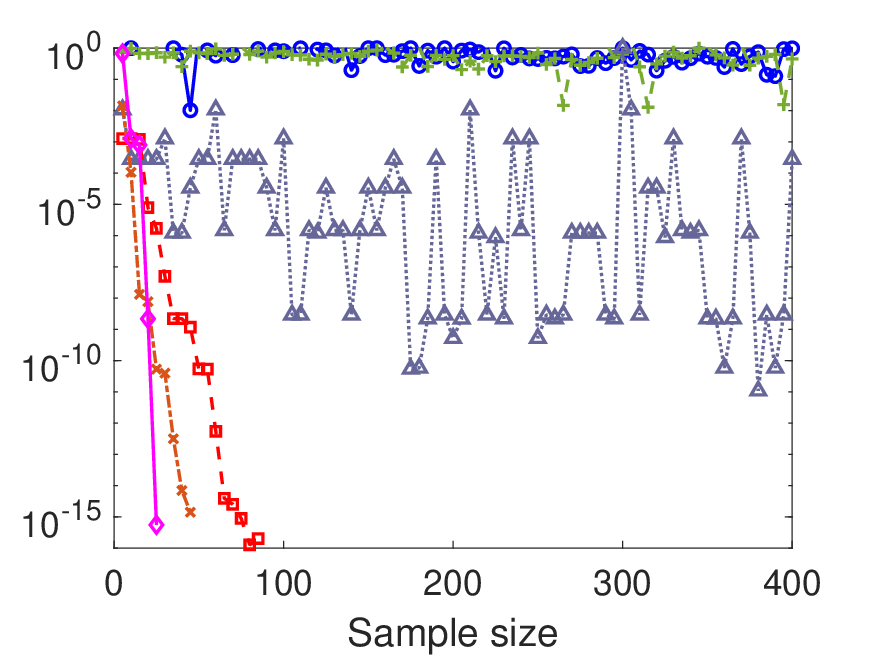} &
\includegraphics[height=0.97in]{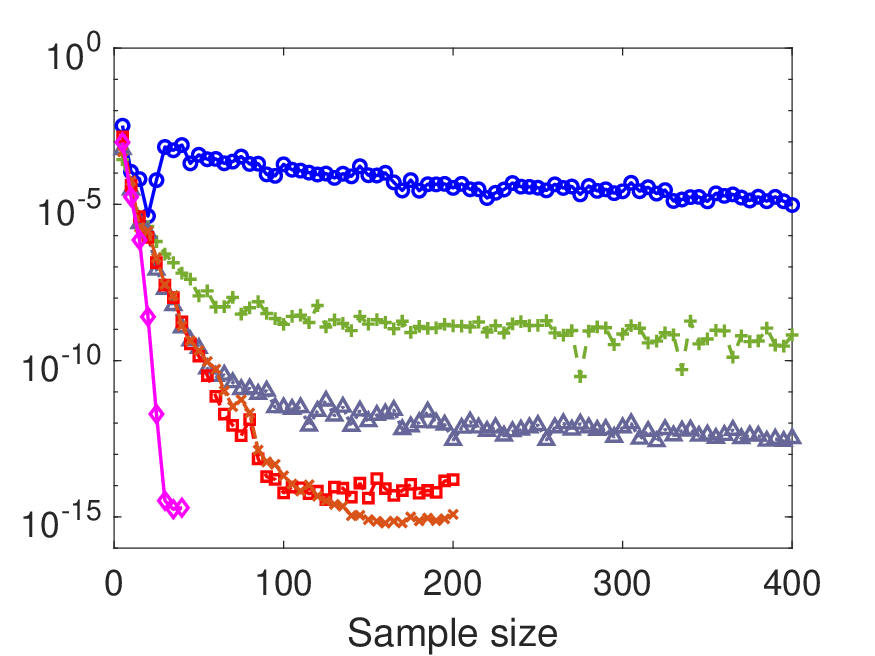}\\
\includegraphics[height=0.97in]{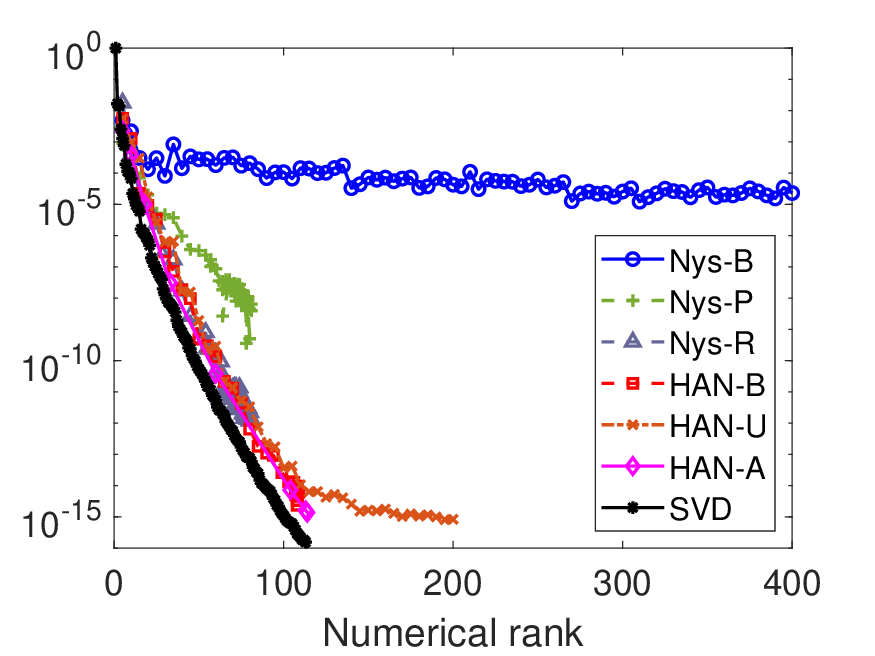} &
\includegraphics[height=0.97in]{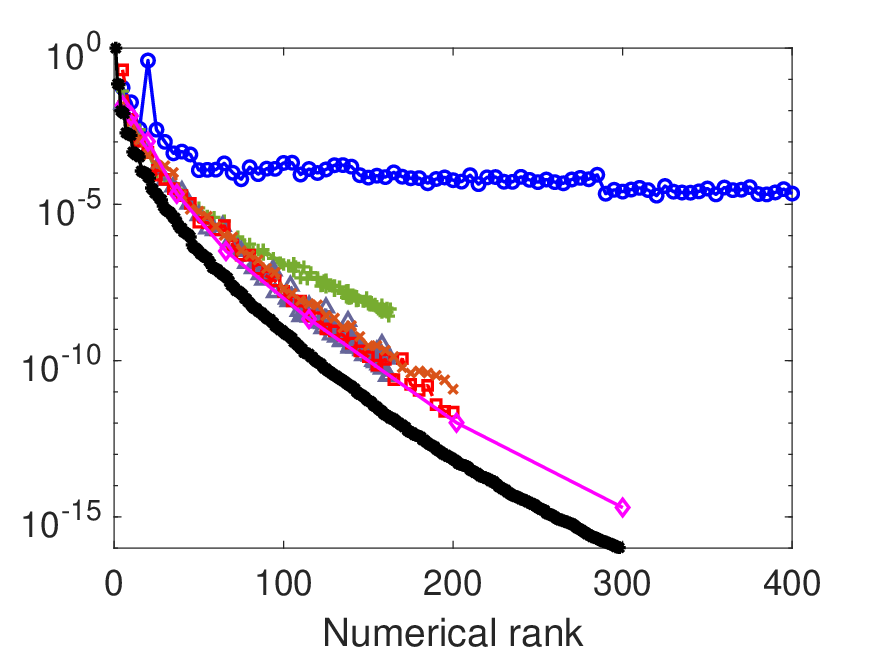} &
\includegraphics[height=0.97in]{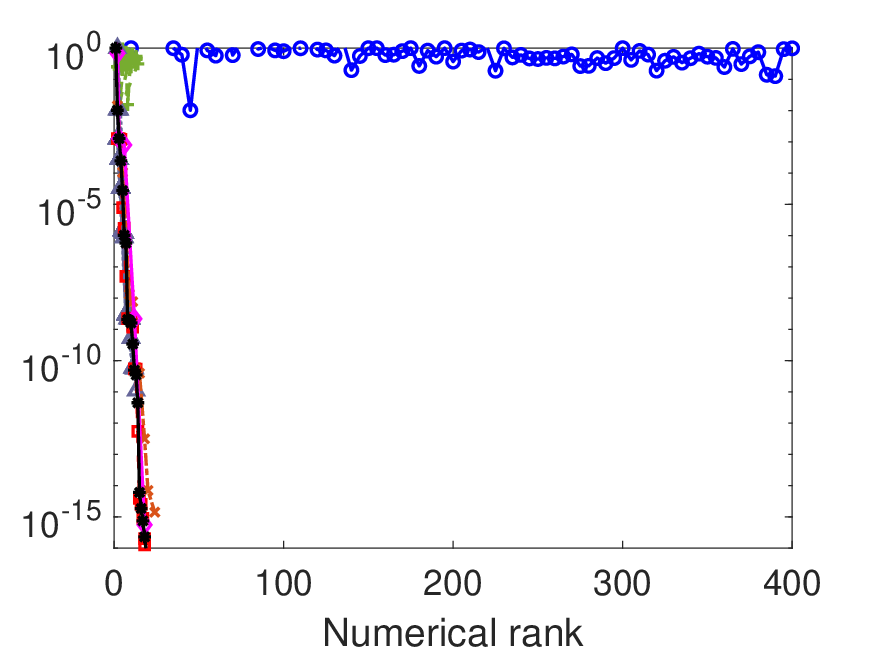} &
\includegraphics[height=0.97in]{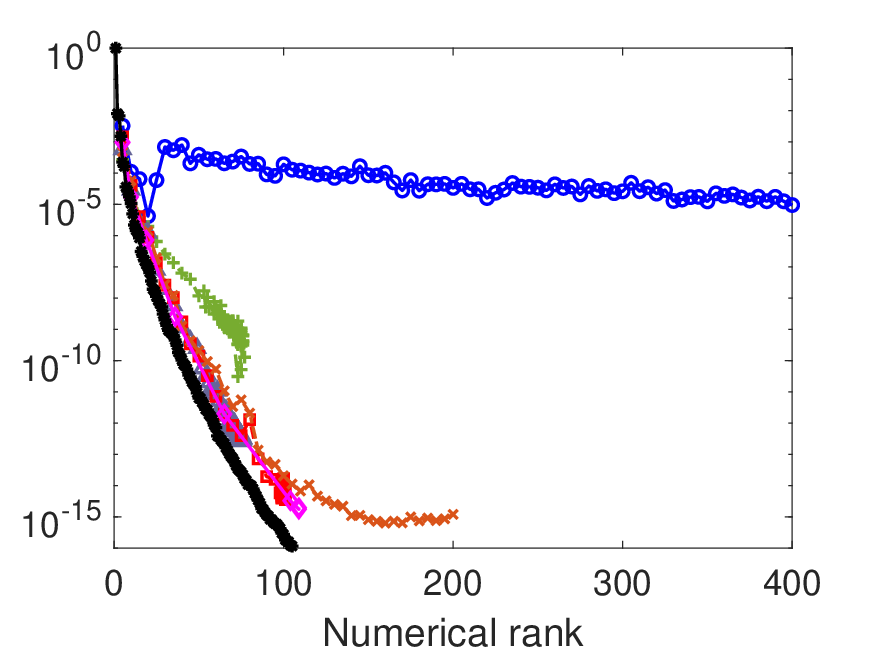}\\
$\kappa(x,y)=\frac{1}{|x-y|}$ & $\tan(x\cdot y+1)$ & $e^{-16|x-y|^{2}}$ &
$\log|x-y|$\end{tabular}
\caption{Example \ref{ex:mesh} (data set \texttt{Set3D}): Low-rank
approximation errors $\frac{\Vert A-\tilde{A}\Vert_{2}}{\Vert A\Vert_{2}}$ as
the sample size $S$ increases in one test, where the second row shows the
errors with respect to the resulting numerical ranks corresponding to the
first row and the SVD line shows the scaled singular values.}\label{ex:mesh3derr}\end{figure}
\begin{figure}[ptbh]
\centering\tabcolsep1mm
\begin{tabular}
[c]{cccc}\includegraphics[height=1in]{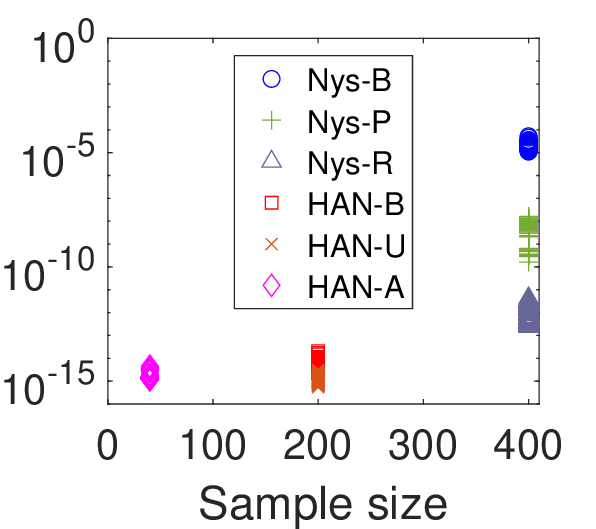} &
\includegraphics[height=1in]{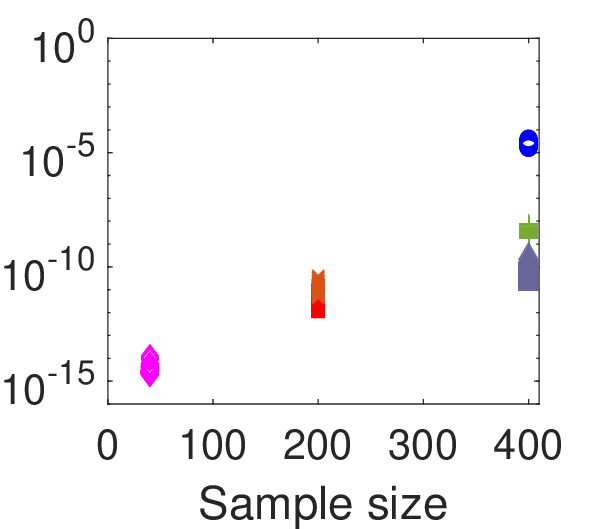} &
\includegraphics[height=1in]{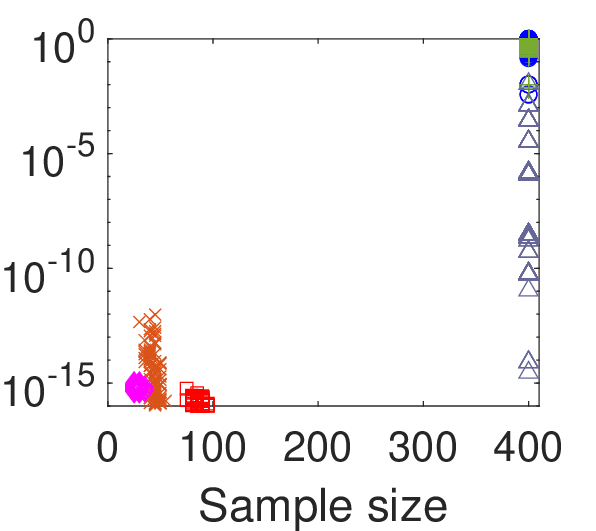} &
\includegraphics[height=1in]{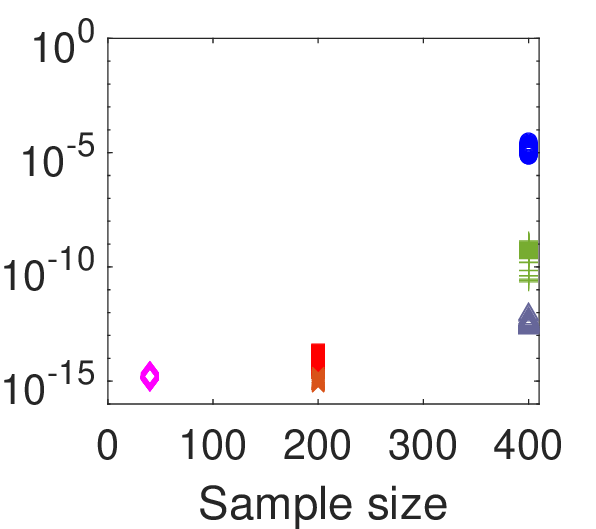}\\
\includegraphics[height=1in]{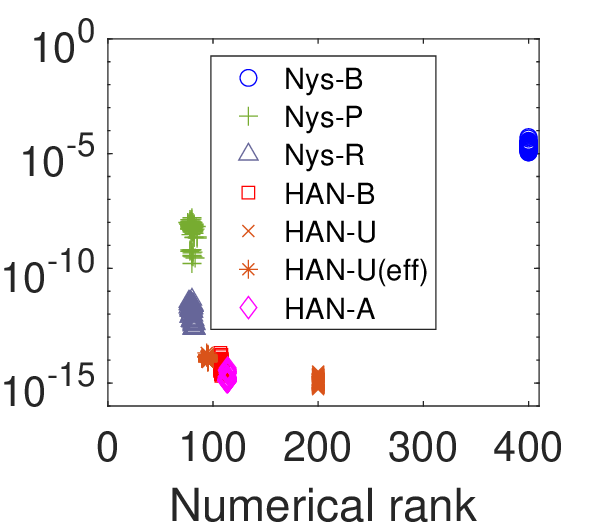} &
\includegraphics[height=1in]{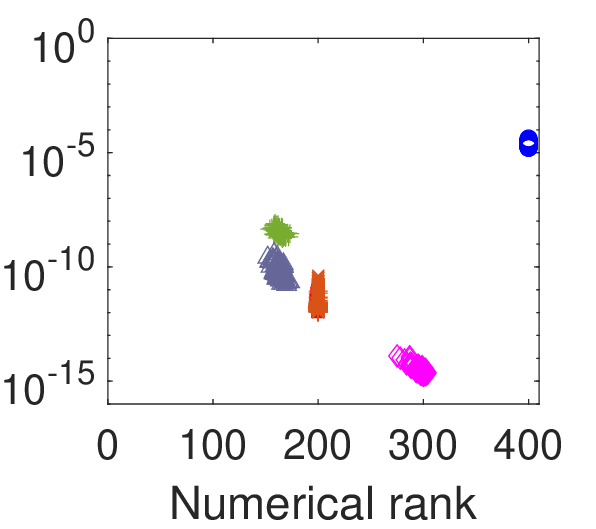} &
\includegraphics[height=1in]{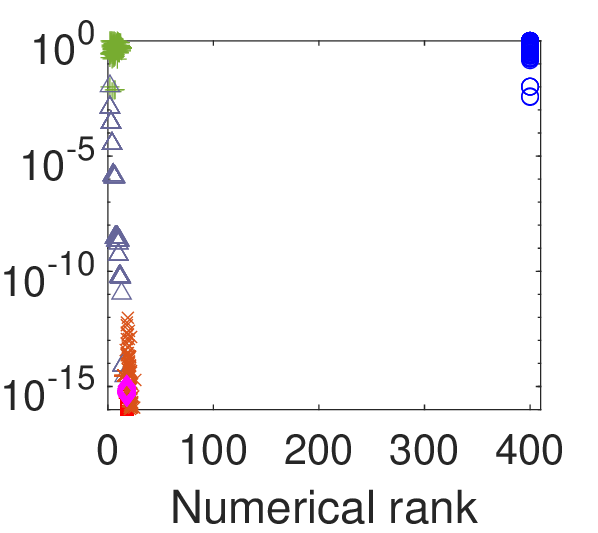} &
\includegraphics[height=1in]{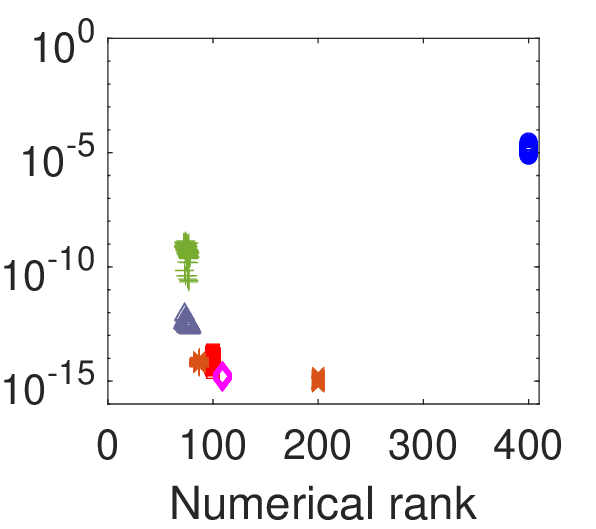}\\
$\kappa(x,y)=\frac{1}{|x-y|}$ & $\tan(x\cdot y+1)$ & $e^{-16|x-y|^{2}}$ &
$\log|x-y|$\end{tabular}
\caption{Example \ref{ex:mesh} (data set \texttt{Set3D}): Relative errors from
running the methods for $100$ times, where \textsf{HAN-U(eff)} is for the
effective errors of \textsf{HAN-U}.}\label{ex:mesh3d}\end{figure}

The aggressive rank advancement also makes \textsf{HAN-A} very efficient. For
each data set, the average timing of \textsf{HAN-A} and \textsf{Nys-R} from
$100$ runs is shown in Table \ref{tab:ex1time}. \textsf{HAN-A} is generally
faster than \textsf{Nys-R} by multiple times.
\begin{table}[ptbh]
\caption{Example \ref{ex:mesh}. Average timing (in seconds) of \textsf{HAN-A}
and \textsf{Nys-R} from $100$ runs.}\label{tab:ex1time}
\begin{center}
\tabcolsep5pt\renewcommand{\arraystretch}{1.2}
\begin{tabular}
[c]{|c|c|c|c|c|c|}\hline
\multirow{3}{*}{\texttt{Flower}} & $\kappa(x,y)$ & $\frac{1}{x-y}$ &
$\sqrt{|x-y|+1}$ & $e^{-|x-y|}$ & $\log|x-y|$\\\cline{2-6}
& \textsf{Nys-R} & 1.751 & 1.285 & 1.313 & 0.851\\
& \textsf{HAN-A} & 0.802 & 0.727 & 0.690 & 0.512\\\hline
\multirow{3}{*}{\texttt{FEM}} & $\kappa(x,y)$ & $\frac{1}{x-y}$ & $\frac
{1}{(x-y)^{2}}$ & $\frac{1}{|x-y|}$ & $\log|x-y|$\\\cline{2-6}
& \textsf{Nys-R} & 0.868 & 0.962 & 2.495 & 0.469\\
& \textsf{HAN-A} & 0.191 & 0.193 & 0.607 & 0.108\\\hline
\multirow{3}{*}{\texttt{Airfoil}} & $\kappa(x,y)$ & $\frac{1}{x-y}$ &
$\frac{1}{\sqrt{|x-y|^{2}+1}}$ & $e^{-\frac{|x-y|^{2}}{2}}$ & $\log
|x-y|$\\\cline{2-6}
& \textsf{Nys-R} & 1.155 & 0.937 & 0.734 & 0.457\\
& \textsf{HAN-A} & 0.322 & 0.451 & 0.289 & 0.158\\\hline
\multirow{3}{*}{\texttt{Set3D}} & $\kappa(x,y)$ & $\frac{1}{|x-y|}$ &
$\tan(x\cdot y+1)$ & $e^{-16|x-y|^{2}}$ & $\log|x-y|$\\\cline{2-6}
& \textsf{Nys-R} & 0.723 & 1.910 & 0.121 & 0.680\\
& \textsf{HAN-A} & 0.355 & 0.788 & 0.035 & 0.333\\\hline
\end{tabular}
\end{center}
\end{table}

\begin{example}
\label{ex:circ}Next, consider a class of implicitly defined kernel matrices
with varying sizes. Suppose $C$ is a circulant matrix with eigenvalues being discretized values of a function
$f(t)$ at some points in an interval. Such
matrices appear in some image processing problems \cite{nag97}, solutions of
ODEs and PDEs \cite{bai03,bai20}, and spectral methods \cite{spectral1d}. They
are usually multiplied or added to some other matrices so that the circulant
structure is destroyed. However, it is shown in \cite{spectral1d,circ} that
they have small off-diagonal numerical ranks for some $f(t)$. Such rank
structures are preserved under various matrix operations. The matrix $A$ we
consider here is the $n\times n$ upper right corner block of $C$ (with half of
the size of $C$). It is also shown in \cite{circ} that $A$ is the evaluation
of an implicit kernel function over certain data points.
\end{example}

We consider $A$ with its size $n=512,1024,\ldots,16384$ so as to demonstrate that \textsf{HAN-A} can reach high accuracies with nearly linear complexity.
For each $n$, we run \textsf{HAN-A} for $10$ times and report the outcome. As
$n$ doubles, Figure \ref{fig:circ}(a) shows the numerical ranks $r$ from
\textsf{HAN-A}, which slowly increase with $n$. This is consistent with the
result in \cite{circ} where it is shown that the numerical ranks grow as a
low-degree power of $\log n$. The low-rank approximation errors are given in
Figure \ref{fig:circ}(b) and the average time from the $10$ runs for each $n$ is given in Figure \ref{fig:circ}(c).
The runtimes roughly follow the
$O(r^{2}n)$ pattern, as explained in Section \ref{sub:upd}.

\begin{figure}[ptbh]
\centering\tabcolsep0mm
\begin{tabular}
[c]{ccc}\includegraphics[height=1.2in]{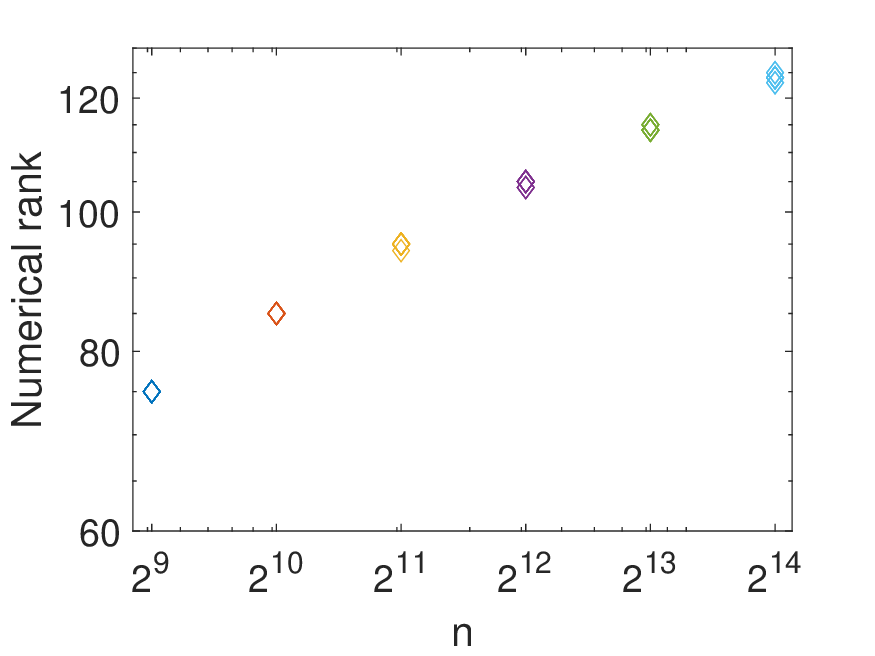} &
\includegraphics[height=1.2in]{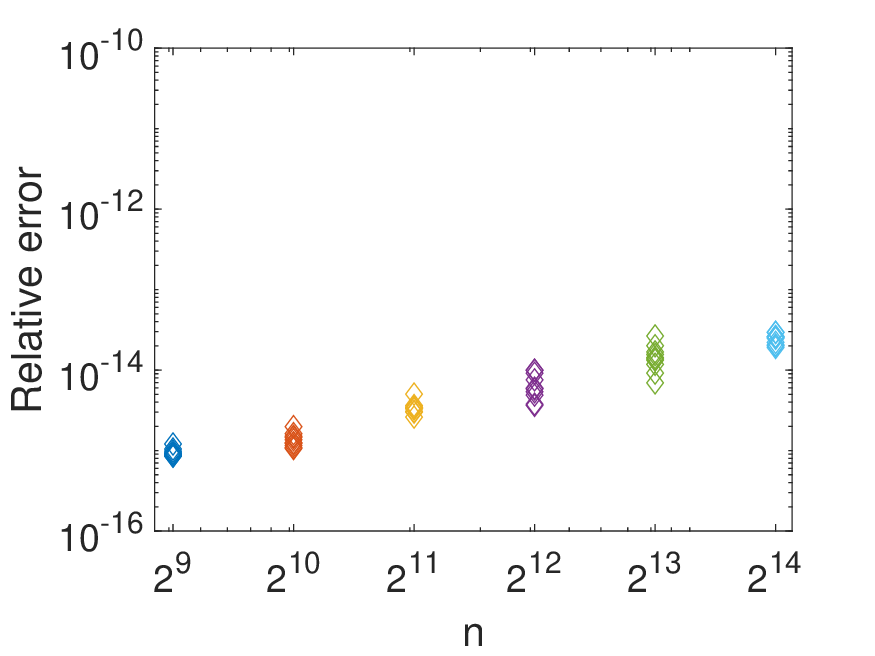} &
\includegraphics[height=1.2in]{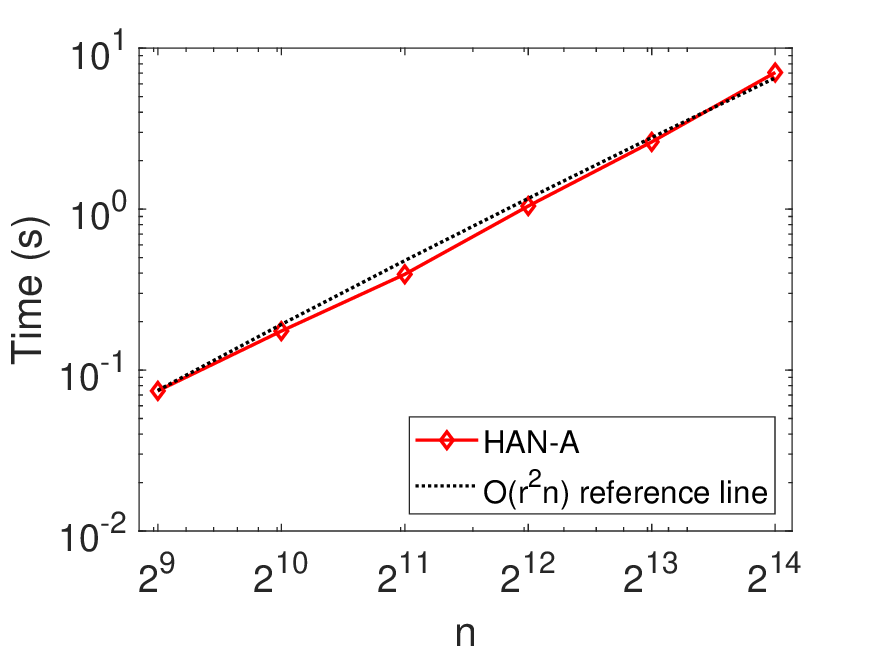}\\
{\small (a) Numerical rank $r$} & {\small (b) $\frac{\Vert A-\tilde{A}\Vert_{2}}{\Vert A\Vert_{2}}$} & {\small (c) Average timing}\end{tabular}
\caption{Example \ref{ex:circ}: Results from \textsf{HAN-A} for a type of
implicitly defined kernel matrices.}\label{fig:circ}\end{figure}

\begin{example}
\label{ex:data}Finally for completeness, we would like to show that the HAN
schemes also work for high-dimensional data sets. (We remark that practical
data analysis may not necessarily need very high accuracies. However, the
HAN\ schemes can serve as a fast way to convert such data matrices into some rank structured forms that
allow quick matrix operations.) We consider kernel
matrices resulting from the evaluation of some kernel functions at two data
sets \texttt{Abalone} and \texttt{DryBean} from the UCI Machine Learning
Repository (https://archive.ics.uci.edu). The two data sets have $4177$ and
$13611$ points in $8$ and $16$ dimensions, respectively. Here, each data set
is standardized to have mean $0$ and variance $1$. We take the submatrix of
each resulting kernel matrix formed by the first $1000$ rows so as to make it
rectangular and nonsymmetric.
\end{example}

A\ set of test results is given in Figure \ref{fig:data}. \textsf{Nys-B} can
only reach modest accuracies around $10^{-5}$. \textsf{Nys-R} can indeed gets
quite good accuracies. Nevertheless, \textsf{HAN-A} still reaches high
accuracies with a small number of sampling steps. Similar results are observed
with multiple runs.

\begin{figure}[ptbh]
\centering\tabcolsep-0.5mm
\begin{tabular}
[c]{cccc}\multicolumn{2}{c}{{\small (Set \texttt{Abalone})}} &
\multicolumn{2}{c}{{\small (Set \texttt{DryBean})}}\\
\includegraphics[height=0.97in]{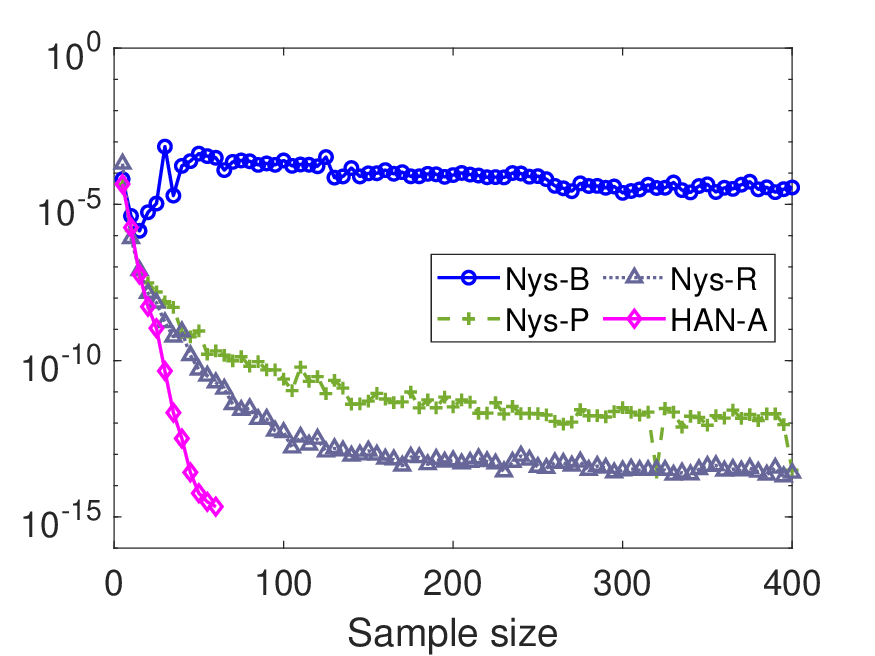} &
\includegraphics[height=0.97in]{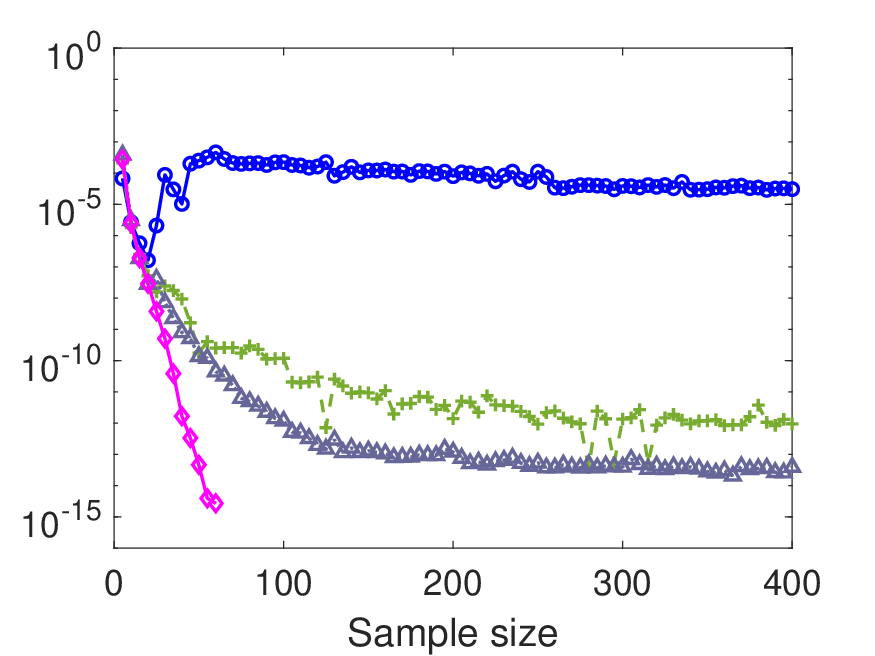} &
\includegraphics[height=0.97in]{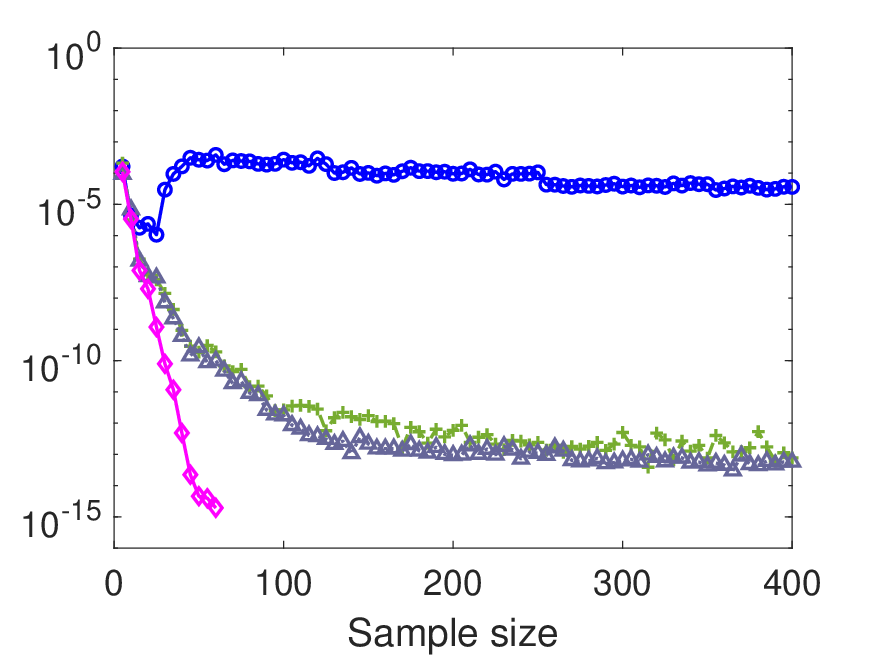} &
\includegraphics[height=0.97in]{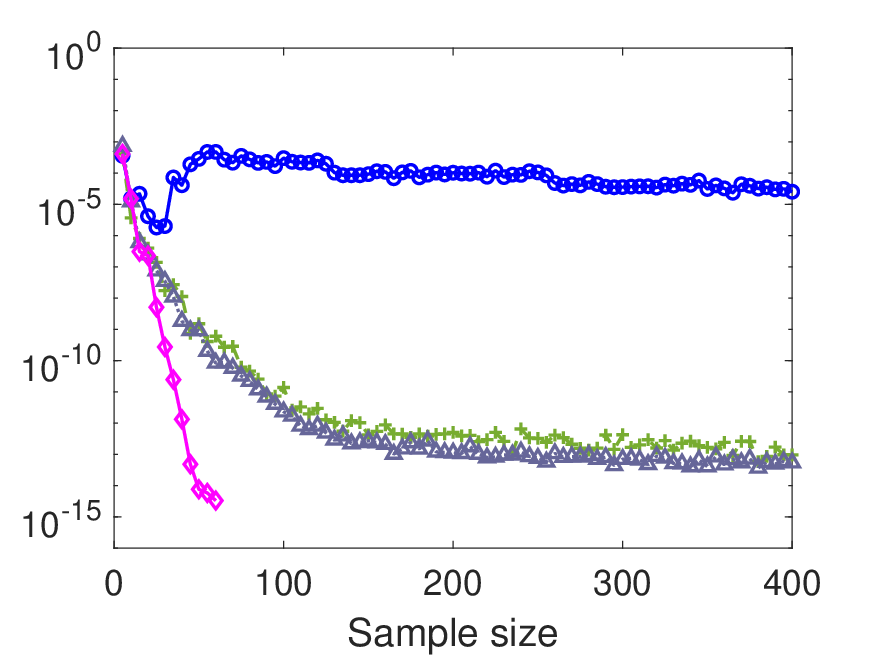}\\
\includegraphics[height=0.97in]{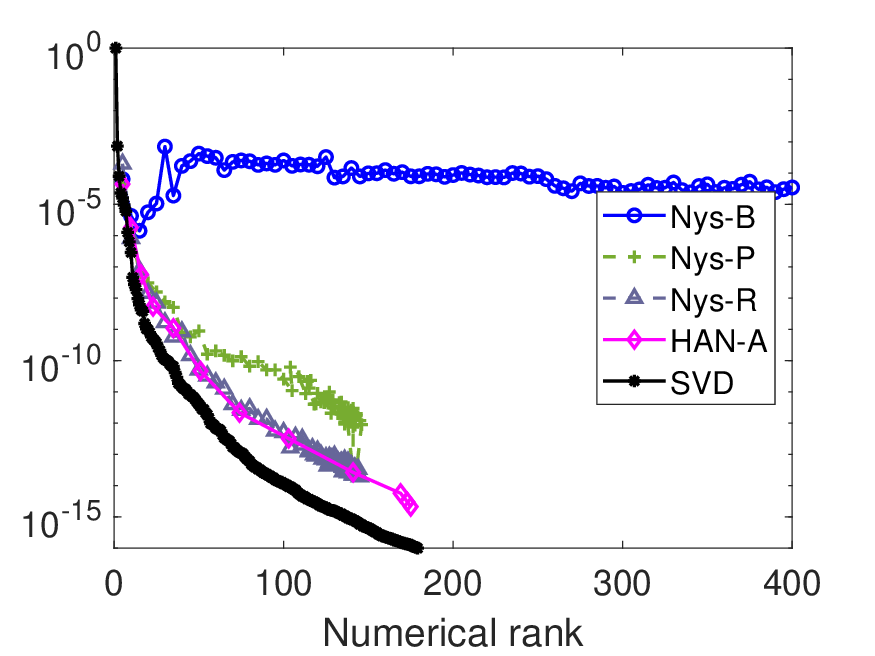} &
\includegraphics[height=0.97in]{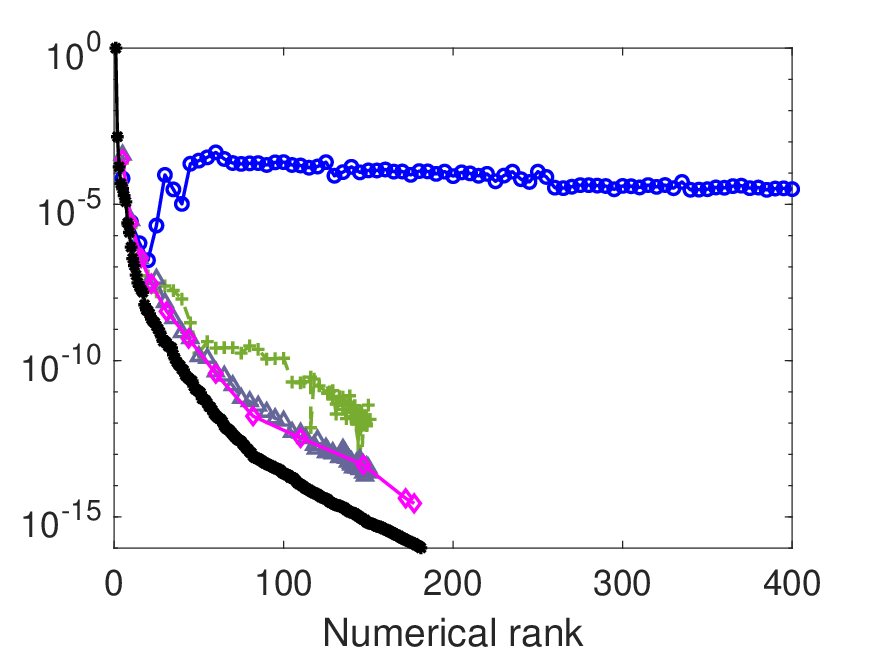} &
\includegraphics[height=0.97in]{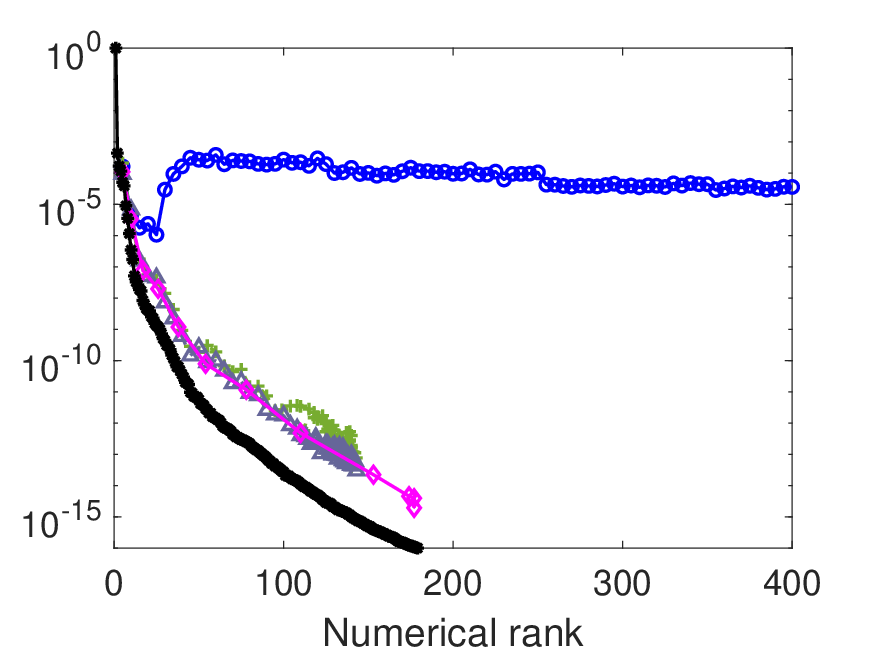} &
\includegraphics[height=0.97in]{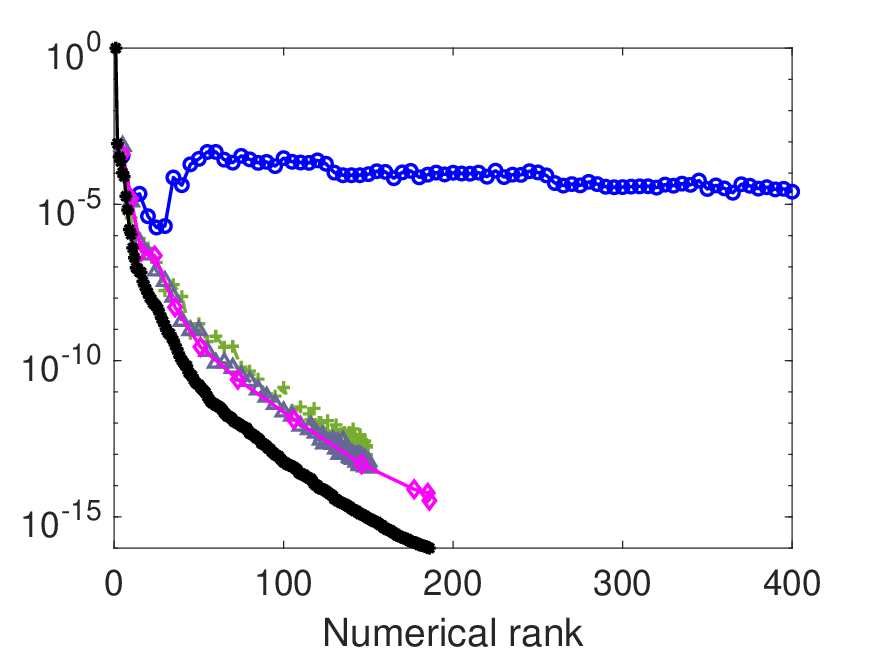}\\
{\small $\kappa(x,y)=\sqrt{\frac{|x-y|^{2}}{\sigma^{2}}+1}$} &
{\small $e^{-\frac{|x-y|^{2}}{\sigma^{2}}}$} & {\small $\sqrt{\frac{|x-y|^{2}}{\sigma^{2}}+1}$} & {\small $e^{-\frac{|x-y|^{2}}{\sigma^{2}}}$}\end{tabular}
\caption{Example \ref{ex:data}: Low-rank approximation errors $\frac{\Vert
A-\tilde{A}\Vert_{2}}{\Vert A\Vert_{2}}$ for high-dimensional tests, where
$\sigma$ in the kernel functions is set to be four times the maximum distance
between the data points and the origin, and the SVD line shows the scaled
singular values.}\label{fig:data}\end{figure}

\section{Conclusions\label{sec:concl}}

This work proposes a set of techniques that can make the Nystr\"{o}m method
reach high accuracies in practice for kernel matrix low-rank approximations. The usual Nystr\"{o}m method is
combined with strong rank-revealing
factorizations to serve as a pivoting strategy. The low-rank basis matrices
are refined through alternating direction row and column pivoting. This is
incorporated into a progressive sampling scheme until a desired accuracy or
numerical rank is reached. A fast subset update strategy further leads to
improved efficiency and also convenient randomized accuracy control. The
design of the resulting HAN schemes is based on some strong heuristics,
as supported by some relevant accuracy and singular value analysis. Extensive
numerical tests show that the schemes can quickly reach high accuracies,
sometimes with quality close to SVDs.

The schemes are useful for low-rank approximations related to kernel matrices
in many numerical computations. They can also be used in rank-structured
methods to accelerate various data analysis tasks. The design of the schemes is fully algebraic and does
not require particular information from the kernel
or the data sets. It remains open to give statistical or deterministic analysis of the decay of the approximation error in the progressive sampling
and refinement steps. We are also attempting a probabilistic study of some
steps in the HAN schemes that may be viewed as a randomized rank-revealing factorization.

\end{document}